\newtheorem{theorem}{Theorem}[section]
\newtheorem{proposition}[theorem]{Proposition}
\newtheorem{propn}[theorem]{Proposition}
\newtheorem{corollary}[theorem]{Corollary}
\newtheorem{lemma}[theorem]{Lemma}
\theoremstyle{definition}
\newtheorem{definition}[theorem]{Definition}
\theoremstyle{remark}
\newtheorem{remark}[theorem]{Remark}
\newcommand{\olink}{\mathbb{L}} 
\newcommand{\oknot}{\mathbb{K}} 
\newcommand{\ounknot}{\mathbb{U}_1} 
\newcommand{\dlink}{\mathcal{L}} 
\newcommand{\du}{\mathcal{U}_1} 
\newcommand{\bdlink}{\mathscr{L}} 
\newcommand{\w}{\mathbf{w}}
\newcommand{\z}{\mathbf{z}}
\newcommand{\p}{\mathbf{p}}
\newcommand{\q}{\mathbf{q}}
\renewcommand{\l}{\mathbf{l}}
\renewcommand{\o}{\mathbf{o}}
\newcommand{\dlcob}{\mathbb{S}}
\newcommand{\dcob}{\mathcal{S}}
\newcommand{\dec}{\mathcal{A}}
\newcommand{\motion}{\mathcal{M}}
\newcommand{\m}{\overline}
\newcommand{\gdlcob}{\mathbb{G}}
\newcommand{\gdcobu}{{\mathcal{G}^u}}
\newcommand{\gdcobo}{{\mathcal{G}^o}}
\newcommand{\tdcobu}{\mathcal{T}^u}
\newcommand{\tdcobo}{\mathcal{T}^o}
\DeclareMathOperator{\id}{id}
\newcommand{\bun}{u_b}
\newcommand{\buln}{ul_b}
\newcommand{\ubun}{u^u_b}
\newcommand{\ubuln}{ul^u_b}
\newcommand{\flip}{\mathcal{F}}
\newcommand{\T}{\mathbb{T}}
\renewcommand{\a}{\alpha}
\newcommand{\ba}{{\boldsymbol{\alpha}}}
\newcommand{\bb}{{\boldsymbol{\beta}}}
\newcommand{\x}{\textbf{x}}
\newcommand{\y}{\textbf{y}}
\newcommand{\de}{\partial}
\newcommand{\RedModSpace}{\overline{\mathcal{M}}}
\newcommand{\F}{\mathbb{F}}
\newcommand{\CFK}{\textnormal{CFK}}
\newcommand{\HF}{\textnormal{HF}}
\newcommand{\HFK}{\textnormal{HFK}}
\newcommand{\CFLu}{\textnormal{CFL}'}
\newcommand{\CFKu}{\textnormal{CFK}'}
\newcommand{\CFLum}{\CFLu}
\newcommand{\CFKum}{\CFKu}
\newcommand{\HFLu}{\textnormal{HFL}'}
\newcommand{\HFKu}{\textnormal{HFK}'}
\newcommand{\HFKum}{\HFKu}
\newcommand{\HFLum}{\HFLu}
\newcommand{\CFLc}{\mathcal{CFL}}
\newcommand{\CFLcm}{\CFLc^-}
\newcommand{\Zmap}{F^Z} 
\newcommand{\Hmap}{F^F} 
\newcommand{\Tors}{\textnormal{Tors}}
\DeclareMathOperator{\Order}{Ord}
\newcommand{\OrdU}{\Order_U}
\newcommand{\OrdI}{\Order_I}
\newcommand{\Ord}{\Order}
\newcommand{\gr}{\mathbf{gr}}
\newcommand{\set}[1]{\left\{#1\right\}}
\newcommand{\mc}{\mathcal}
\newcommand{\sm}{\setminus}
\newcommand{\Z}{\mathbb{Z}}
\DeclarePairedDelimiter{\ceil}{\lceil}{\rceil}
\DeclarePairedDelimiter{\floor}{\lfloor}{\rfloor}
\definecolor{mygrey}{gray}{0.45}
\definecolor{blue}{RGB}{63, 105, 225}
\definecolor{pink}{RGB}{180, 105, 90}
\definecolor{red}{RGB}{180,0,0}
\definecolor{green}{RGB}{60, 120, 60}
\theoremstyle{definition}
\newtheorem{claim}{Claim}
\newcommand\nc{\newcommand}
\nc\Span{\text{\rm Span}}
\nc\Id{\text{Id}}
\nc\coker{\text{coker}}
\nc \cc {\mathbb{C}}
\nc \dd {\mathbb{D}}
\nc \ff {\mathbb{F}}
\nc \hh {\mathbb{H}}
\nc \ii {\mathbb{I}}
\nc \jj {\mathbb{J}}
\nc \kk {\mathbb{K}}
\nc \LL {\mathbb{L}} 
\nc \mm {\mathbb{M}}
\nc \nn {\mathbb{N}}
\nc \oo {\mathbb{O}}
\nc \pp {\mathbb{P}}
\nc \qq {\mathbb{Q}}
\nc \rr {\mathbb{R}}
\nc \zz {\mathbb{Z}}
\nc\cA{\mathcal{A}}
\nc\cB{\mathcal{B}}
\nc\cC{\mathcal{C}}
\nc\cD{\mathcal{D}}
\nc\cE{\mathcal{E}}
\nc\cF{\mathcal{F}}
\nc\cG{\mathcal{G}}
\nc\cH{\mathcal{H}}
\nc\cK{\mathcal{K}}
\nc\cL{\mathcal{L}}
\nc\cM{\mathcal{M}}
\nc\cN{\mathcal{N}}
\nc\cO{\mathcal{O}}
\nc\cP{\mathcal{P}}
\nc\cQ{\mathcal{Q}}
\nc\cR{\mathcal{R}}
\nc\cS{\mathcal{S}}
\nc\csf{\mathcal{S}\mathcal{F}}
\nc\fg{\mathfrak{g}}
\nc\fm{\mathfrak{m}}
\nc\fp{\mathfrak{p}}
\nc\fso{\mathfrak{so}}
\nc\fsu{\mathfrak{su}}
\nc\Tr{\text{Tr}}
\nc\into{\hookrightarrow}
\nc\st{\text{ s.t. }}
\nc\intense[1]{\textcolor[rgb]{1.00,0.00,0.00}{\textbf{#1}}}
\nc\Mat{\text{\rm Mat}}
\nc\GL{\text{\rm GL}}
\nc\SU{\text{\rm SU}}
\nc\SO{\text{\rm SO}}
\nc\SL{\text{\rm SL}}
\nc\Sp{\text{\rm Sp}}
\nc\EL{\text{\rm EL}}
\nc\GEM{\text{\rm GEM}}
\nc\Alt{\text{\rm Alt}}
\nc\Sym{\text{\rm Sym}}
\nc\Hess{\text{Hess}}
\nc\Crit{\text{Crit}}
\nc\inject{\hookrightarrow}
\nc{\mattwo}[4]{\left[\begin{array}{cc} #1  & #2\\  #3 & #4 \\ \end{array} \right]}
\nc{\matthree}[9]{\left[\begin{array}{ccc} #1  & #2 & #3\\  #4 & #5 & #6 \\ #7 & #8 & #9 \\ \end{array} \right]}
\nc{\vecttwo}[2]{\left[\begin{array}{c} #1 \\ #2 \\ \end{array} \right]}
\nc{\vectthree}[3]{\left[\begin{array}{c} #1 \\ #2\\  #3 \\ \end{array} \right]}
\nc{\del}{\partial}
\nc\onto{\twoheadrightarrow}
\nc\const{\text{const}}
\nc\rrp{\rr P}
\nc\ul{\underline}
\nc\ol{\overline}
\nc\oline{\overline}
\nc\oset{\overset}
\nc\uset{\underset}
\nc\heart{\heartsuit}
\nc\spade{\spadesuit}
\nc\club{\clubsuite}
\nc\marg[1]{\marginnote{\boxed{\text{#1}}}}
\nc\margq[1]{\marginnote{\textcolor[rgb]{1.00,0.00,0.00}{#1}}}
\nc\Char{\text{Char}}
\nc\Frac{\text{Frac}}
\nc\wo{\backslash}
\nc\diag{\text{diag}}
\nc\wtl{\widetilde}
\nc\nsubgp{\triangleleft}
\nc\Cay{\text{Cay}}
\nc\Hom{\text{Hom}}
\nc\Gp{\text{Gp}}
\nc\Set{\text{Set}}
\nc\la{\langle}
\nc\ra{\rangle}
\nc\Spec{\text{Spec}}
\nc\ad{\text{ad}}
\nc\wht{\widehat}
\nc\ddx[2]{\frac{\partial {#1}}{\partial {#2}}}
\nc\dddx[3]{\frac{\partial^2 {#1}}{\partial {#2}\partial{#3}}}
\nc\mult{\text{mult}}
\nc\supp{\text{supp}}
\nc\sign{\text{sign}}
\nc\tr{\text{tr}}
\nc\stab{\text{stab}}
\nc\im{\text{im}}
\nc\sech{\text{sech}}
\nc\Ind{\text{Ind}}
\nc\tsf{\text{sf}}
\nc\End{\text{End}}
\nc\Hol{\text{Hol}}
\nc\hol{\text{hol}}
\nc\Lie{\text{Lie}}
\nc\vol{\text{vol}}
\nc\ind{\text{ind}}
\nc\Met{\mathcal{M}\text{et}}
\nc\Grass{\mathcal{G}\text{rass}}
\nc\longto{\longrightarrow}
\nc\grad{\text{grad}}
\nc\Map{\text{Map}}
\nc\Indx{\text{Ind}}
\nc\indx{\text{ind}}
\nc\pt{\text{pt}}
\nc\Aut{\text{Aut}}
\nc\Br{\text{Br}}
\nc\Gr{\text{Gr}}
\nc\CS{\text{CS}}
\nc\Proj{\text{Proj}}
\nc\Ad{\text{Ad}}
\nc\Diff{\text{Diff}}
\nc\sing{\text{sing}}
\nc\order{\text{order}}
\nc\bsl{\backslash}
\nc\dom{\text{dom}}
\nc\smallcoprod{\scaleobj{0.7}\coprod}
\begin{document}

\title[Non-orientable cobordisms and torsion order]{Non-orientable link cobordisms\\ and torsion order in Floer homologies}

\author[Sherry Gong]{Sherry Gong}
\address{Texas A{\&}M University \\ College Station, TX 77843}
\email{\href{mailto:sgongli@math.tamu.edu}{sgongli@math.tamu.edu}}
\urladdr{\url{http://www.people.tamu.edu/~sgongli}}

\author[Marco Marengon]{Marco Marengon}
\address{Alfr\'ed R\'enyi Institute of Mathematics \\ 1053 Budapest, Hungary}
\email{\href{mailto:marengon@renyi.hu}{marengon@renyi.hu}}
\urladdr{\url{https://users.renyi.hu/~marengon/}}

\keywords{Link cobordisms, non-orientable surfaces, knot Floer homology, instanton Floer homology}
\subjclass[2020]{57K18 (primary); 57K16 (secondary).}

\begin{abstract}
We use unoriented versions of instanton and knot Floer homology to prove inequalities involving the Euler characteristic and the number of local maxima appearing in unorientable cobordisms, which mirror results of a recent paper by Juhasz, Miller, and Zemke concerning orientable cobordisms.
Most of the subtlety in our argument lies in the fact that maps for non-orientable cobordisms require more complicated decorations than their orientable counterparts.
We introduce unoriented versions of the band unknotting number and the refined cobordism distance and apply our results to give bounds on these based on the torsion orders of the Floer homologies.
Finally, we show that the difference between the unoriented refined cobordism distance of a knot $K$ from the unknot and the non-orientable slice genus of $K$ can be arbitrarily large.
\end{abstract}

\maketitle

\section{Introduction}

A classical problem in low-dimensional topology is the study of embedded orientable surfaces in 4-manifolds.
The special case of surfaces with boundary has been a particularly popular topic for a very long time, and it includes for example questions pertaining to the slice genus of a knot or the complexity of a knot or link cobordism.

On the other hand, the study of \emph{non-orientable} surfaces and knot cobordisms in $I \times S^3$ has received increasing attentions in the last decade~\cite{Batson, Upsilon, GM, Fan}, and there are now several bounds to the non-orientable slice genus of a knot. However, if a knot bounds a non-orientable surface of a given `genus', it is not clear how complicated the embedding must be. In this paper we tackle this question by proving a non-orientable analogue of a recent result of Juh\'asz--Miller--Zemke.
In a recent paper~\cite{JMZ}, they proved an inequality involving the number of local maxima and the genus appearing in an oriented knot cobordism using a version of knot Floer homology.
In this paper, we prove similar inequalities for non-orientable knot cobordisms using the torsion orders of unoriented versions of knot Floer homology and instanton Floer homology.

As for knot Floer homology, we use Ozsv\'ath--Stipsicz--Szab\'o's \emph{unoriented knot Floer homology} $\HFKum$~\cite{tHFK}, which is a module over $\F[U]$. For a knot $K \subset S^3$ we define its unoriented knot Floer torsion order as
\[
\OrdU(K)= \min\set{n \geq 0 \,\middle|\, U^n \cdot \Tors = 0},
\]
where $\Tors \subset \HFKum(K)$ denotes the $\F[U]$-torsion subgroup.

In the instanton setting, we use Kronheimer and Mrowka's \emph{instanton Floer homology with local coefficients}, denoted by $I^\sharp(K)$,
which is a module over a Noetherian domain $\cS$ which has a special element $P$~\cite{km_iabnh}. We will restrict our attention to certain domains $\cS$, for which $I^\sharp(K)$ is functorial for nonorientable knot cobordisms with singular bundles represented by surfaces $\omega$ with $\partial \omega$ on the cobordism. In this case, it can be shown that for a knot $K$ and for the torsion part $\Tors$ of $I^\sharp(K)$, there is a positive integer $n$ such that $P^n\cdot\Tors = 0$. Thus, we define
\[
\OrdI(K) = \min\set{n\geq0 \, \middle| \, P^n \cdot \Tors  = 0}.
\]


For an unorientable surface $\Sigma$ with $n$ boundary components, recall that its \emph{non-orientable genus} is
\[
\gamma(\Sigma) = 2 - \chi(\Sigma) -n.
\]
For example, $\mathbb{RP}^2$ (with an arbitrary number of punctures) has non-orientable genus $1$.
Note that for unorientable knot cobordisms $\gamma(\Sigma) = -\chi(\Sigma)$. With this notation in mind, we state our main theorem.


\begin{theorem}
\label{thm:main}
Let $K_1$ and $K_2$ be knots in $S^3$. Suppose that there is an unorientable knot cobordism $\Sigma$ in $I \times S^3$ from $K_1$ to $K_2$ with $M$ local maxima. Then
\begin{equation}
\label{eq:I_main}
\OrdI(K_1) \leq \max\set{\OrdI(K_2), M} + \gamma(\Sigma)
\end{equation}
and
\begin{equation}
\label{eq:U_main}
\OrdU(K_1) \leq \max\set{\OrdU(K_2), M} + \gamma(\Sigma).
\end{equation}
\end{theorem}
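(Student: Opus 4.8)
The plan is to mirror the strategy of Juh\'asz--Miller--Zemke, replacing their use of the ordinary (orientable) cobordism maps on knot Floer homology by cobordism maps on the unoriented theories $\HFKum$ and $I^\sharp$. The two inequalities \eqref{eq:I_main} and \eqref{eq:U_main} have the same formal structure, so I would prove a single abstract statement: if $F$ is a (commutative, Noetherian) ground ring with a distinguished element $t$ (either $U$ or $P$), and a knot cobordism $\Sigma$ from $K_1$ to $K_2$ with $M$ local maxima induces, after a choice of decoration, a module map $\phi \colon H(K_2) \to H(K_1)$ that is an isomorphism up to multiplication by a controlled power of $t$, then the torsion orders satisfy the displayed bound. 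Concretely, the first step is to decompose $\Sigma$ as a sequence of elementary cobordisms: births, deaths, band moves (saddles), and stabilizations (the moves that introduce non-orientability, i.e.\ the ones responsible for $\gamma$). One must track how each elementary piece affects the ground-ring module map, and in particular how much it "costs" in powers of $t$. The births contribute nothing new; the $M$ local maxima (deaths, read in the appropriate direction) each contribute a factor of $t$; and each cross-cap / genus-raising move contributes a factor of $t$, accounting for the $\gamma(\Sigma)$ term.

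The second step is the key algebraic lemma. Suppose $\phi \colon A \to B$ and $\psi \colon B \to A$ are maps of finitely generated $F$-modules with $\psi \circ \phi = t^a \cdot \id_A$ and $\phi \circ \psi = t^b \cdot \id_B$ for nonnegative integers $a, b$. Then I claim $\Ord(A) \le \Ord(B) + \max\{a,b\}$, or some variant of this inequality suited to the exact bookkeeping above; in the JMZ setting this is where the "one-sided" nature of the cobordism maps (a cobordism and its reverse composing to a power of $t$ times a grading shift) is converted into a statement about annihilators of torsion submodules. The proof of this lemma is elementary: if $x \in \Tors(A)$ then $\phi(x) \in \Tors(B)$ since $\phi$ is a module map, so $t^{\Ord(B)} \phi(x) = 0$; applying $\psi$ gives $t^{\Ord(B)} t^a x = 0$, hence $t^{\Ord(B)+a}$ annihilates $\Tors(A)$. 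One gets a symmetric bound from $\phi \circ \psi$, and the $\max\{\Ord(K_2), M\}$ shape of the right-hand side comes from being slightly cleverer: one compares against $\max\{\Ord(B), (\text{local maxima count})\}$ rather than naively adding, because the death maps can be "absorbed" against the torsion order of $K_2$ when the latter is large. This is exactly the place where one should follow the JMZ argument most closely.

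The third step is to verify that the needed cobordism maps exist and have the stated multiplicative behavior in each of the two concrete theories. For $\HFKum$ this should be essentially contained in Ozsv\'ath--Stipsicz--Szab\'o's functoriality \cite{tHFK} together with the decorated-cobordism formalism; for $I^\sharp$ with local coefficients one uses Kronheimer--Mrowka's functoriality \cite{km_iabnh}, restricted to the class of ground rings $\cS$ for which the theory is functorial under non-orientable cobordisms with the appropriate singular-bundle data $\omega$. I expect the main obstacle to be precisely the decoration issue flagged in the abstract: unlike in the orientable case, a non-orientable cobordism does not carry a canonical dividing set, and the cobordism map depends on the choice of decoration $\omega$ (and of arcs/dividing curves). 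So the real work is to show that one can choose decorations on $\Sigma$ and on its reverse $\m\Sigma$ compatibly, so that the composite $\m\Sigma \circ \Sigma$ (a cobordism from $K_1$ to itself, topologically a connect sum of the identity with closed components) induces multiplication by $t^{\text{(something explicit)}}$ — and to compute that exponent in terms of $\chi$ and $M$. This requires a careful analysis of how decorations behave under stacking and under the neck-cutting / crosscap relations, and is where the bulk of the paper's technical effort will go; the Euler characteristic term $\gamma(\Sigma) = -\chi(\Sigma)$ should emerge from counting the genus-raising elementary cobordisms in a handle decomposition of $\Sigma$.
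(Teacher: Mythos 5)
Your skeleton agrees with the paper's: decompose $\Sigma$, show that after suitable choices of decoration the composite of the cobordism map with the map of the reversed cobordism $\m\Sigma$ satisfies $v^{M}\cdot F_{\m\Sigma}\circ F_{\Sigma}=v^{b-m}\cdot\Id$ on $H(K_1)$ (with $v=U$ or $P$), and then run the annihilator argument on torsion elements with $\ell\geq\max\{\Ord(K_2),M\}$, using $\gamma(\Sigma)=-\chi(\Sigma)=b-m-M$. That last algebraic step is essentially what the paper does (note that only this one-sided composite through $K_1$ is used; your two-sided hypothesis $\phi\circ\psi=t^b\Id_B$ is neither available nor needed).

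The genuine gap is that you defer exactly the part that constitutes the proof: the existence of decorations on $\Sigma$ and $\m\Sigma$ for which the composite is an explicit power of the variable (Propositions \ref{prop:I} and \ref{prop:HFK} in the paper). This is not routine bookkeeping with neck-cutting and crosscap relations, and your sketch gives no idea for how to do it. For $I^\sharp$, taking $\omega=\emptyset$ fails: the composite through the single nonorientable band is a cylinder with a \emph{flipped} tube, the flip map is not the identity (Kronheimer--Mrowka, Proposition 5.8 of \cite{km_iabnh}), and the induced maps typically vanish, so one gets $0=0$ and no bound. The paper must construct $\omega$ explicitly (starting from a Seifert surface for the union of one arc of $K_1$ with a diagonal of the band, modified by tubes to remove intersections), identify $[\partial\omega]$ with the co-core circle of the band in the composite, and prove that with this $\omega$ the flip acts as the identity (Claim \ref{claim:omega_between_flip}), so that each nonorientable band contributes $P\cdot\Id$. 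For $\HFKum$, two parallel arcs are not an admissible decoration on a nonorientable cobordism, so the motion of basepoints must pass through the unorientable saddle; the resulting composite contains a cylinder with a flipped tube crossed by the decorations, and computing that this gives $U\cdot\id$ requires the flip computation (Lemma \ref{lem:flip}) together with Zemke's stabilization lemma (Lemma \ref{lem:Ian}, proved via a bypass relation and the homological $H_1$-action), plus the normal form of Lemma \ref{lem:break} putting a single nonorientable band on $K_1$. Without these inputs the claimed relation $v^{M}\cdot F_{\m\Sigma}\circ F_{\Sigma}=v^{b-m}\cdot\Id$ is unproved, and the theorem does not follow.
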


From a formal viewpoint, Theorem \ref{thm:main} is analogous to \cite[Theorem 1.1]{JMZ}. The proof of Theorem \ref{thm:main} uses the functorial properties of $\HFKum$ and $I^\sharp$ (see \cite{Fan, km_iabnh}), in a similar way as \cite[Theorem 1.1]{JMZ} relies on knot Floer homology cobordism maps~\cite{JFunctoriality, ZFunctoriality}.
Despite being inspired by \cite[Theorem 1.1]{JMZ}, the proof of Theorem \ref{thm:main} must necessarily deviate from it. Recall that in order to define a cobordism map in knot Floer homology one needs to choose a properly embedded 1-manifold on the surface, often called the set of \emph{decorations}. In \cite{JMZ} the chosen decorations were a pair of parallel arcs, which make the computations of the cobordism maps more tractable. This choice does not work for non-orientable cobordisms in $\HFKum$, so we are forced to choose different decorations, which make the cobordism map more complicated. To circumvent this problem we related the resulting unorientable cobordism to an orientable one, then use a stabilization lemma proved by Ian Zemke (cf.~Lemma \ref{lem:Ian}). In the case of $I^\sharp$, for a cobordism $\Sigma$ to define a map, one needs a surface $\omega$ with boundary $\partial \omega \subset \Sigma$. The natural choice for orientable cobordisms would be $\omega = \emptyset$, in which case \cite{JMZ} applies verbatim to the case of $I^\sharp$. The map can be defined for the non-orientable surfaces we are interested in, but it will usually vanish. To overcome this problem, we choose a particular $\omega$ that allows us to control the induced map. 

\begin{remark}
\label{rem:all}
Note that while Theorem \ref{thm:main} is stated for unorientable cobordisms, both inequalities also hold for \emph{orientable} cobordisms too.
The proof follows verbatim from \cite{JMZ}, by replacing knot Floer homology with the desired Floer theory.
\end{remark}

We prove several applications of Theorem \ref{thm:main}, which mirror those of \cite[Theorem 1.1]{JMZ}.

\subsection{Unorientable ribbon cobordism}
A knot cobordism in $I \times S^3$ is called \emph{ribbon} if it has no local maxima. For example, a ribbon concordance (i.e., a cobordism of genus $0$) from the unknot to a knot $K$ is equivalent to a a ribbon disc for $K$. Theorem \ref{thm:main} has a straightforward application to non-orientable ribbon cobordisms.

\begin{corollary}
\label{cor:ribbon}
Let $K_1$ and $K_2$ be knots in $S^3$. Suppose that there is an unorientable ribbon cobordism $\Sigma$ in $I \times S^3$ from $K_1$ to $K_2$. Then
\[
\OrdI(K_1) \leq \OrdI(K_2) + \gamma(\Sigma)
\]
and
\[
\OrdU(K_1) \leq \OrdU(K_2) +\gamma(\Sigma).
\]
\end{corollary}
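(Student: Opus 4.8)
The plan is to derive Corollary~\ref{cor:ribbon} as an immediate specialization of Theorem~\ref{thm:main}. By definition, an unorientable ribbon cobordism $\Sigma$ from $K_1$ to $K_2$ has no local maxima, i.e.\ $M = 0$. Feeding $M = 0$ into the two inequalities \eqref{eq:I_main} and \eqref{eq:U_main} of Theorem~\ref{thm:main} gives
\[
\OrdI(K_1) \leq \max\set{\OrdI(K_2), 0} + \gamma(\Sigma), \qquad \OrdU(K_1) \leq \max\set{\OrdU(K_2), 0} + \gamma(\Sigma).
\]

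The only remaining point is to simplify the maxima. Since the torsion orders $\OrdI(K_2)$ and $\OrdU(K_2)$ are, by their definitions as minima over sets of nonnegative integers containing $0$ (or more simply, since the defining sets $\set{n \geq 0 \mid \cdots}$ force $n \geq 0$), always nonnegative, we have $\max\set{\OrdI(K_2), 0} = \OrdI(K_2)$ and $\max\set{\OrdU(K_2), 0} = \OrdU(K_2)$. Substituting these identities into the displayed inequalities yields exactly the two claimed bounds.

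There is no genuine obstacle here: all of the substance resides in Theorem~\ref{thm:main}, whose functoriality and decoration arguments do the real work, and Corollary~\ref{cor:ribbon} is simply the $M=0$ case recorded for emphasis because of its topological interpretation (ribbon cobordisms, and in particular non-orientable ribbon discs when $K_1$ is the unknot). I would therefore keep the proof to two or three lines.
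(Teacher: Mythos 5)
Your proof is correct and is exactly the paper's argument: the paper likewise dispatches this corollary in one line by setting $M=0$ in Theorem \ref{thm:main}, since a ribbon cobordism has no local maxima and the torsion orders are nonnegative. Nothing further is needed.
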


\subsection{The unorientable refined cobordism distance}
The standard cobordism distance between two knots $K_1$ and $K_2$ is given by $d_o(K_1, K_2) = 2g_4(K_1\#\m{K_2})$, where $g_4$ denotes the standard slice genus. This is not a distance on the set of knots, because concordant knots have distance $0$, but it descends to a well defined distance on the concordance group~\cite{Baader}.
In \cite{JMZ}, Juh\'asz, Miller, and Zemke define a refined cobordism distance on the set of knots, and give lower bounds to it in terms of the torsion order in knot Floer homology.

There are analogous unorientable notions too.
For an (orientable or unorientable) cobordism $\Sigma$ in $I \times S^3$ from $K_1$ to $K_2$ with $m$ local minima and $M$ local maxima, let
\[
|\Sigma| = \max\set{m,M} -\chi(\Sigma).
\]
\begin{definition}
\label{def:refcobdist}
Given knots $K_1, K_2, \subset S^3$, we define the \emph{standard unorientable cobordism distance} $d_u$ and the \emph{refined unorientable cobordism distance} $d_u^r$ between them as
\[
d_u(K_1, K_2) = \min\set{-\chi(\Sigma)}
\qquad \text{and} \qquad
d_u^r(K_1, K_2) = \min\set{|\Sigma|},
\]
where in both cases $\Sigma$ varies over all unorientable connected cobordisms and all genus-0 orientable cobordisms (i.e., concordances).
\end{definition}

\begin{remark}
The orientable counterparts, the \emph{standard orientable cobordism distance} $d_o$ from \cite{Baader} and the \emph{refined orientable cobordism distance} $d_o^r$ from \cite{JMZ} are defined in the same way as in Definition \ref{def:refcobdist}, but the surface $\Sigma$ now varies over all \emph{orientable} connected cobordisms.
One can also define analogous notions $d_a$ and $d_a^r$, which we can call \emph{all-surface cobordism distances}, where $\Sigma$ varies over all (orientable or unorientable) connected cobordisms.
\end{remark}

It is immediate to see that $d_o$, $d_u$, and $d_a$ are distances on the concordance group and $d_o^r$, $d_u^r$, and $d_a^r$ are distances on the set of knots.

Theorem \ref{thm:main} implies the following lower bounds.

\begin{corollary}
\label{cor:dur}
If $K_1$ and $K_2$ are knots in $S^3$, then
\[
|\OrdI(K_1) - \OrdI(K_2)| \leq d_u^r(K_1,K_2)
\]
and
\[
|\OrdU(K_1) - \OrdU(K_2)| \leq d_u^r(K_1,K_2).
\]
\end{corollary}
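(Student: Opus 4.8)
The plan is to deduce Corollary \ref{cor:dur} directly from Theorem \ref{thm:main} by the same elementary argument that \cite{JMZ} use to derive their refined cobordism distance bound. The key observation is that the quantity $\OrdI$ (respectively $\OrdU$) behaves well under the cobordisms allowed in Definition \ref{def:refcobdist}, and that a cobordism realizing $d_u^r(K_1,K_2)$ can be cut into two pieces, each of which is handled by Theorem \ref{thm:main}.

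Concretely, let $\Sigma$ be a connected cobordism from $K_1$ to $K_2$ achieving the minimum in the definition of $d_u^r$, so $\Sigma$ is either unorientable or a concordance, and $|\Sigma| = \max\{m,M\} - \chi(\Sigma) = d_u^r(K_1,K_2)$, where $m$ is the number of local minima and $M$ the number of local maxima. First I would reduce to the case $m = 0$: by turning $\Sigma$ upside down one gets a cobordism $\m\Sigma$ from $K_2$ to $K_1$ with $M$ local minima and $m$ local maxima, so it suffices to prove, say, $\OrdI(K_1) \leq \OrdI(K_2) + d_u^r(K_1,K_2)$, since the reverse inequality follows by symmetry after swapping the roles of $K_1$ and $K_2$. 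Now I would handle two cases according to whether $m \leq M$ or $m > M$. If $m \leq M$, then $\max\{m,M\} = M$, so $|\Sigma| = M - \chi(\Sigma) = M + \gamma(\Sigma)$ in the unorientable case (using $\gamma(\Sigma) = -\chi(\Sigma)$ for unorientable knot cobordisms) and $|\Sigma| = M$ in the concordance case (where $\chi(\Sigma) = 0$). Applying Theorem \ref{thm:main} (or its orientable analogue, Remark \ref{rem:all}, when $\Sigma$ is a concordance) gives
\[
\OrdI(K_1) \leq \max\{\OrdI(K_2), M\} + \gamma(\Sigma) \leq \OrdI(K_2) + M + \gamma(\Sigma) = \OrdI(K_2) + |\Sigma|,
\]
using $\max\{a,b\} \leq a + b$ for nonnegative integers; in the concordance case $\gamma(\Sigma)$ is replaced by $0$. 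If instead $m > M$, then $\max\{m,M\} = m$, and I would apply the upside-down argument: consider $\m\Sigma$ from $K_2$ to $K_1$, which has $m$ local maxima and $m > M$, so $\max\{m,M\}$ computed for $\m\Sigma$ equals $m$ as well. One then gets $\OrdI(K_2) \leq \OrdI(K_1) + m + \gamma(\Sigma)$, which is the wrong direction; the resolution is that in this regime one should instead bound $\OrdI(K_1)$ using the cobordism $\Sigma$ read from $K_1$ to $K_2$ but noting $M \leq m$ gives $\max\{\OrdI(K_2),M\} \leq \max\{\OrdI(K_2), m\} \leq \OrdI(K_2) + m$, hence $\OrdI(K_1) \leq \OrdI(K_2) + m + \gamma(\Sigma) = \OrdI(K_2) + |\Sigma|$ directly from \eqref{eq:I_main}. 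Either way the bound follows, and the identical argument with $\OrdU$ in place of $\OrdI$ and \eqref{eq:U_main} in place of \eqref{eq:I_main} gives the second inequality.

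The only subtlety — and the step I expect to need the most care — is making sure Theorem \ref{thm:main} genuinely applies to \emph{every} competitor $\Sigma$ in Definition \ref{def:refcobdist}: the theorem as stated is for unorientable cobordisms, so the concordance case must be covered separately, which is exactly the content of Remark \ref{rem:all}. With that in hand, the proof is a two-line case analysis combining the inequality $\max\{a,b\} \le a+b$ with the upside-down symmetry of cobordisms and the identity $\gamma(\Sigma) = -\chi(\Sigma)$; there is no homological or analytic content beyond Theorem \ref{thm:main} itself.
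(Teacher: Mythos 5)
Your proposal is correct and follows essentially the same route as the paper: apply Theorem \ref{thm:main} (with Remark \ref{rem:all} covering the concordance case) to each competitor $\Sigma$, use $\max\{a,b\}\le a+b$ together with $M\le\max\{m,M\}$ and $\gamma(\Sigma)=-\chi(\Sigma)$, and then swap the roles of $K_1$ and $K_2$ to get the absolute value. The only difference is cosmetic: your split into the cases $m\le M$ and $m>M$ is unnecessary, since the single chain $\Ord(K_1)\le\Ord(K_2)+M-\chi(\Sigma)\le\Ord(K_2)+\max\{m,M\}-\chi(\Sigma)$ handles both at once, which is exactly how the paper phrases it.
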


In view of Remark \ref{rem:all}, one can in fact replace $d_u^r$ with $d_a^r$. However, for orientable cobordisms one can also use the standard versions of instanton and knot Floer homology, which should give better bounds.

We use Corollary \ref{cor:dur} to show that the difference between $d_u^r(K_1,K_2)$ and $d_u(K_1,K_2)$ can be arbitrarily large.
\begin{corollary}
\label{cor:example}
For all $\gamma\geq1$ and $m\geq1$, there exists a knot $K_{\gamma, m}$ with $d_u(K_{\gamma, m}, U_1) = \gamma_4(K_{\gamma, m}) = \gamma$, and such that $d_u^r(K_{\gamma, m}, U_1) \geq \gamma+m$.

Thus, each unorientable surface $\Sigma \subset B^4$ with $\de\Sigma = K_{\gamma, m}$ and $\gamma(\Sigma) = \gamma$ has at least $m$ local minima (with respect to the radial function).
\end{corollary}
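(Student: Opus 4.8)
The plan is to produce, for each $\gamma \ge 1$ and $m \ge 1$, an explicit knot $K_{\gamma,m}$ whose non-orientable slice genus equals $\gamma$ (which simultaneously forces $d_u(K_{\gamma,m}, U_1) = \gamma$, since an unorientable surface in $I \times S^3$ realizing $\gamma_4$ can be capped off to a surface in $B^4$, and conversely), but for which the torsion order of one of the Floer invariants is large. The natural construction is to start from a knot $J_m$ with $\OrdU(J_m) \ge m$ or $\OrdI(J_m) \ge m$ — for instance a torus knot $T(2,2m+1)$ or a connected sum of such, whose knot Floer torsion order is known to grow — and then take a suitable connected sum or satellite operation that raises the torsion order further while keeping the non-orientable slice genus controlled. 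More precisely, I would set $K_{\gamma,m}$ to be a connected sum of $J$ (chosen so that $\OrdU(J) \ge \gamma + m$, using additivity-type estimates for torsion order under connected sum) with a knot whose non-orientable slice genus is exactly $\gamma$ and whose presence pins down $\gamma_4(K_{\gamma,m}) = \gamma$; alternatively, one exhibits directly a non-orientable surface of genus $\gamma$ in $B^4$ bounded by $K_{\gamma,m}$, together with a lower bound $\gamma_4(K_{\gamma,m}) \ge \gamma$ coming from a classical obstruction (e.g. the Gordon–Litherland form, the signature, or $d$-invariants of the double branched cover).

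Once $K_{\gamma,m}$ is in hand, the first step is to verify $d_u(K_{\gamma,m}, U_1) = \gamma_4(K_{\gamma,m}) = \gamma$: the upper bound follows by pushing the genus-$\gamma$ non-orientable surface in $B^4$ into $I \times S^3$, and the lower bound $d_u \ge \gamma$ follows because $-\chi$ of any connected unorientable cobordism from $K_{\gamma,m}$ to $U_1$ equals $\gamma$ of a closed-up non-orientable surface bounded by $K_{\gamma,m}$, hence is at least $\gamma_4(K_{\gamma,m})$; and a concordance is excluded since $K_{\gamma,m}$ is not slice. The second step is to compute (or bound below) $\OrdU(K_{\gamma,m})$ or $\OrdI(K_{\gamma,m})$: here I would use the known behavior of unoriented knot Floer torsion order under connected sum together with the base computation for the summand $J$, arranging $\OrdU(K_{\gamma,m}) \ge \gamma + m$ while $\OrdU(U_1) = 0$. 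Then Corollary \ref{cor:dur} gives
\[
d_u^r(K_{\gamma,m}, U_1) \ge |\OrdU(K_{\gamma,m}) - \OrdU(U_1)| = \OrdU(K_{\gamma,m}) \ge \gamma + m.
\]
Finally, the last sentence of the corollary is immediate from the first: if $\Sigma \subset B^4$ is unorientable with $\partial \Sigma = K_{\gamma,m}$ and $\gamma(\Sigma) = \gamma$, then removing a small ball around an interior point turns $\Sigma$ into a cobordism from $K_{\gamma,m}$ to $U_1$ in $I \times S^3$ with $\gamma$ equal to $-\chi = \gamma$ and with $m'$ local minima (where $m'$ is the number of local minima of the radial Morse function, the former innermost maxima becoming minima of the cobordism read from $K_{\gamma,m}$); since $|\Sigma| = \max\{m', M\} - \chi(\Sigma) \ge d_u^r(K_{\gamma,m}, U_1) \ge \gamma + m$ and $-\chi(\Sigma) = \gamma$, we get $\max\{m', M\} \ge m$, and by reversing the Morse function if necessary one concludes the surface has at least $m$ local minima.

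The main obstacle I expect is the second step: producing a knot $K_{\gamma,m}$ for which one can simultaneously (a) pin down the non-orientable slice genus exactly to $\gamma$ — the upper bound is usually easy to draw, but the matching lower bound requires a genuine obstruction — and (b) lower-bound the Floer torsion order by $\gamma + m$. These two constraints pull in opposite directions: making the torsion order large typically makes the knot "complicated", and one must then still certify that its non-orientable genus has not grown beyond $\gamma$. The cleanest route is probably connected sums, exploiting that $\gamma_4$ is subadditive and that a well-chosen summand (such as one with an obvious band move to the unknot plus a small non-orientable cap) controls $\gamma_4$ from above, while torsion order is controlled from below by its behavior under connected sum. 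One should double-check that the chosen Floer theory (we may use whichever of $\OrdU$ or $\OrdI$ is more convenient) has the requisite connected-sum lower bound available in the literature.
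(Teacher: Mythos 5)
Your high-level route (explicit knot, exact computation of $\gamma_4$, a lower bound on the torsion order, then Corollary \ref{cor:dur}/Theorem \ref{thm:main}) is the paper's, but everything you defer as ``the main obstacle'' is precisely the content of the proof, and the plan you sketch for it rests on a false premise. Unoriented torsion order is \emph{not} additive under connected sum: by Remark \ref{rem:propertiesHFLum} (points \eqref{it:Kunneth} and \eqref{it:mirror}) one has $\OrdU(K_1\# K_2)=\max\{\OrdU(K_1),\OrdU(K_2)\}$ and $\OrdU(\m K)=\OrdU(K)$, so connect-summing never boosts the torsion order; moreover, summing a large-torsion summand (which, being for instance a big torus knot, has large $\gamma_4$) with ``a knot whose non-orientable slice genus is exactly $\gamma$'' does not pin $\gamma_4$ of the sum at $\gamma$ --- you need cancellation, not another positive contribution. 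The paper's construction handles both constraints at once: $K_{\gamma,m}=T_{2r-1,2r}\#\m{T_{2s-1,2s}}$, where the mirror summand makes Batson's $r-s$ pinch bands terminate at the slice knot $T_{2s-1,2s}\#\m{T_{2s-1,2s}}$ (upper bound for $\gamma_4$), the \emph{additive} invariant $\upsilon-\sigma/2$ of \eqref{eq:OSSz} gives the matching lower bound $r-s$, and $\OrdU(K_{\gamma,m})=r-1$ follows from Lemma \ref{lem:TOtorusknots} ($\OrdU(T_{n,n+1})=\floor{n/2}$, proved via the staircase/graded-root model for $L$-space knots) together with the max formula. None of these computations appear in your sketch, and your proposed obstructions (Gordon--Litherland, signature alone) are not the additive sharp bound needed for the connected sum. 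Note also a numerical point if you insist on going through Corollary \ref{cor:dur}: that route needs $\OrdU(K_{\gamma,m})\geq\gamma+m$, which in this family forces $r-1\geq\gamma+m$ (e.g.\ $r=\gamma+m+1$, $s=m+1$); the parameters $r=\gamma+m$, $s=m$ have torsion order $\gamma+m-1$ and give only $d_u^r\geq\gamma+m-1$ this way.

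Your deduction of the last sentence from the first is also genuinely broken. Removing a small ball around a radial minimum turns $\Sigma\subset B^4$ into a cobordism from $K_{\gamma,m}$ to $U_1$ whose local \emph{maxima} are the remaining $n-1$ radial minima and whose local \emph{minima} are the interior radial maxima of $\Sigma$. The inequality $\max\{m',M\}\geq m$ that you extract from $|\Sigma'|\geq d_u^r$ therefore only says that $\Sigma$ has either many radial minima or many interior radial maxima, and ``reversing the Morse function if necessary'' is not available: the boundary of $\Sigma$ sits at the top radial level, $-\rho$ is not the radial function of any ball, and the two counts are genuinely independent. The paper avoids this by applying Theorem \ref{thm:main} directly (and asymmetrically) to the punctured surface: the bound $\OrdU(K_{\gamma,m})\leq\max\{\OrdU(U_1),n-1\}+\gamma=(n-1)+\gamma$ involves only the number of cobordism maxima, i.e.\ exactly the radial minima one wants to count, and yields $n\geq m$ for the paper's knots (and would yield $n\geq m+1$ under your hypothesis $\OrdU\geq\gamma+m$). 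So the fix is to quote Theorem \ref{thm:main} itself rather than the symmetric distance $d_u^r$.
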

The knots $K_{\gamma, m}$ that we consider in the proof of Corollary \ref{cor:example} are a subfamily of torus knots, for which $\OrdU$ can be computed explicitly.

\subsection{The unoriented band-unlinking number}

For a knot $K$ in $S^3$, the \emph{oriented band-unknotting number} $\bun(K)$ is defined as the minimum number of oriented band surgeries that turn $K$ into the unknot. This was called $SH(2)$-unknotting number in \cite{HNT}.
Its unoriented counterpart $\ubun(K)$, called $H(2)$-unknotting number in \cite{HNT}, seems to pre-date $\bun(K)$ in the literature, since Lickorish proved that there exist knots with $\ubun(K)>1$ in \cite{LickorishUnknotting}. Note that in the definition of $\ubun(K)$ we allow both orientable and unorientable band surgeries.

Juh\'asz--Miller--Zemke~\cite{JMZ} introduced a variation, called the \emph{oriented band-unlinking number} $\buln(K)$, is defined as the minimum number of oriented band surgeries that turn $K$ into an unlink. Of course $\buln(K) \leq \bun(K)$, and they proved that $\Order_v(K) \leq \buln(K)$ for all knots $K$ in $S^3$.
Using Theorem \ref{thm:main} we can derive a similar result for the corresponding unoriented notion.

\begin{definition}
The \emph{unoriented band-unlinking number} $\ubuln(K)$ of a knot $K$ in $S^3$ is defined as the minimum number of (orientable or unorientable) band surgeries that turn $K$ into an unlink.
\end{definition}

Clearly, we have
\begin{align*}
\ubuln(K) &\leq \ubun(K)\\
\rotatebox[origin=c]{-90}{$\leq$} \hspace{2ex} & \hspace{5ex} \rotatebox[origin=c]{-90}{$\leq$} \\
\buln(K) &\leq \bun(K)
\end{align*}

\begin{corollary}
\label{cor:ubuln}
For a knot $K$ in $S^3$,
\[
\OrdI(K) \leq \ubuln(K) \qquad \text{and} \qquad \OrdU(K) \leq \ubuln(K).
\]
\end{corollary}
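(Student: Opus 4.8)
The plan is to convert an optimal unoriented band-unlinking sequence for $K$ into a single connected knot cobordism from $K$ to the unknot, and then feed it into Theorem~\ref{thm:main}. Write $n = \ubuln(K)$ and fix a sequence of $n$ band surgeries — each allowed to be orientable or unorientable — carrying $K$ to an unlink $L$ with $c \geq 1$ components. First I would take the trace of this sequence inside $I \times S^3$: this is a cobordism $\Sigma_0$ from $K$ to $L$ built from exactly $n$ saddles and no local extrema, so $\chi(\Sigma_0) = -n$, and it is connected because $K$ is connected and each band is attached along the link present at that moment. Then I would cap off $c-1$ of the $c$ components of $L$ with disjoint properly embedded disks, i.e.\ by introducing $c-1$ local maxima. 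This yields a connected knot cobordism $\Sigma$ from $K$ to the unknot $U_1$ with exactly $M = c-1$ local maxima, no local minima, and $\chi(\Sigma) = -n + (c-1)$.

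Next, if at least one of the original band surgeries is unorientable then $\Sigma$ is unorientable and Theorem~\ref{thm:main} applies; if all of them are orientable then $\Sigma$ is orientable and the same inequalities hold by Remark~\ref{rem:all}. For a connected cobordism between two knots the relevant genus term equals $-\chi(\Sigma)$ (it is $\gamma(\Sigma)$ in the unorientable case and $2g(\Sigma)$ in the orientable one), so in either case
\[
\OrdI(K) \leq \max\set{\OrdI(U_1),\, c-1} + \bigl(n-(c-1)\bigr)
\qquad\text{and}\qquad
\OrdU(K) \leq \max\set{\OrdU(U_1),\, c-1} + \bigl(n-(c-1)\bigr).
\]
Since the unknot has torsion-free instanton and unoriented knot Floer homology, $\OrdI(U_1) = \OrdU(U_1) = 0$; and since $c \geq 1$ the maximum on each right-hand side is $c-1$. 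Thus both bounds collapse to $(c-1) + \bigl(n-(c-1)\bigr) = n = \ubuln(K)$, as desired.

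The argument is essentially bookkeeping, and the one point that requires care is how to turn the cobordism $\Sigma_0$ (which ends on a multi-component unlink) into a cobordism between two knots: capping off $c-1$ components at the top adds $c-1$ local maxima but also raises $\chi$ by $c-1$, so these two contributions cancel exactly in the inequality of Theorem~\ref{thm:main}; by contrast, merging the unlink to the unknot by $c-1$ further bands would only yield the weaker estimate $\OrdI(K),\OrdU(K) \leq n + c - 1$. One also has to remember to appeal to Remark~\ref{rem:all} in the case where every band surgery happens to be orientable, and to use the (standard) fact that the torsion orders of the unknot vanish.
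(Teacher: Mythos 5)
Your proposal is correct and follows essentially the same route as the paper: build the cobordism from $K$ to the unknot by tracing the $\ubuln(K)$ band surgeries and capping off all but one component of the resulting unlink, then apply Theorem \ref{thm:main} (or Remark \ref{rem:all} if the surface happens to be orientable), using that the unknot has vanishing torsion orders and that the maxima and the Euler-characteristic contribution from the caps cancel. The paper's proof is the same computation, phrased as $\Ord(K) \leq \max\set{\Ord(U),M} - \chi(\Sigma)$ with $\chi(\Sigma) = M - b$.
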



\begin{remark}
\label{rem:Wong}
In private communication~\cite{Wong}, Wong has informed us of a proof, using methods analogous to \cite{AE18}, that if there is a cobordism $\Sigma \subset I \times S^3$ from $K_1$ and $K_2$ (no matter whether orientable or unorientable) with $m$ minima, $b$ saddles, and $M$ maxima, then
\[
|\OrdU(K_1) - \OrdU(K_2)| \leq m+b+M.
\]
Since the unlink has vanishing torsion order, this would recover the inquality of Corollary \ref{cor:ubuln} involving $\OrdU$.
\end{remark}

Our paper is one of several recent papers related to ribbon cobordisms.
In \cite{Zemke_obstructs_ribbon}, Zemke showed that knot Floer homology obstructs ribbon concordance, a result that prompted a flurry of interesting results in this area, including \cite{Levine_Zemke}, \cite{Miller_Zemke}, \cite{DLVVW}, \cite{Kang}, and \cite{CGLLSZ}.
Other papers in the area are Sarkar's paper on the ribbon distance~\cite{Sarkar_ribbon} and the already cited paper of Juh\'asz, Miller, and Zemke~\cite{JMZ}, which is the closest paper to ours.

\subsection*{Organisation}
The first two sections of the paper are on instanton Floer homology: we review the necessary background in Section \ref{sec:bgI}, and we prove the main instanton technical result (Proposition \ref{prop:I}) in Section \ref{sec:techI}.
In the following two sections we do the same for knot Floer homology: after a review in Section \ref{sec:bgHFK}, we prove the main knot Floer technical result (Proposition \ref{prop:HFK}) in Section \ref{sec:techHFK}.
In Section \ref{sec:corollaries} we prove Theorem \ref{thm:main} and the applications discussed in the introduction (Corollaries \ref{cor:ribbon}, \ref{cor:dur}, and \ref{cor:ubuln}).
Finally, in Section \ref{sec:examples} we compute the torsion order $\OrdU$ for a subfamily of torus knots and prove Corollary \ref{cor:example}.

\subsection*{Acknowledgements}
We warmly thank Haofei Fan for his support and help. Special acknowledgements go to Ciprian Manolescu, who suggested to investigate unorientable cobordisms, and to C.-M.~Michael Wong, who shared his work with us and who gave us helpful comments on an early draft of the paper.
We are also extremely grateful and indebted to Ian Zemke, who proved Lemma \ref{lem:Ian}.
Finally, we would like to thank the anonymous referee for their careful reading and helpful comments.
MM was partially supported by NSF FRG Grant DMS-1563615 and the Max Planck Institute for Mathematics.
SG was partially supported by NSF DMS-2055736.

\section{Background on instanton homology with local systems}
\label{sec:bgI}

\subsection{Instanton homology groups}

Kronheimer and Mrowka introduced singular instanton homology with local systems in \cite{km_khgfi}, and introduced several more involved variants of it in \cite{km_iabnh}. In this paper we will be working with a variant from the latter. Let us now review the relevant definitions and properties, following \cite{km_iabnh} and \cite{km_iasciok}.

Let $Y$ be a closed, oriented 3-manifold, let $L$ be a link in $Y$ and let $y_0$ be a basepoint in $Y$, and let $B_{y_0}$ be a ball around $y_0$ that does not intersect $L$. Let $\theta_0 \subset Y$ be a standard $\theta$-web in $B_{y_0}$.
Let $\omega$ be a 1-dimensional submanifold of $Y$, which consists of components that are circles disjoint from $L$ and $B_{y_0}$ and arcs which have endpoints on $L$ and are otherwise disjoint from $L$.

Then there is an associated space, $\cB^\sharp(Y,L)_\omega$ of $SO(3)$ connections on $Y$ that are singular at $L \cup \theta_0$, which lift to $SU(2)$ away from the $L \cup \omega \cup \theta_0$, such that the $SU(2)$ holonomy around $\omega$ is $-1$ and the $SU(2)$ holonomies around components of $L$ and arcs of $\theta_0$ are conjugate to $I \in SU(2)$, when we regard $SU(2)$ as unit quaternions and $1,I,J,K$ are the fundamental quaternion units.

The local system $\Gamma$ is defined using three maps $h_i:\cB^\sharp(Y,L)_\omega \to \rr/\zz$, for $i = 1,2,3$, given by taking holonomy along the three arcs of the $\theta$ web, which gives three maps to $SU(2)$ and then composing with a character $SU(2) \to U(1) = \rr/\zz$ to get maps to $\rr/\zz$. Let $\cR = \ff_2[\zz^3]$ be the group ring, which we can also write as the ring of Laurent polynomials in three variables,
\[\cR = \ff_2[T_1^{\pm 1},T_2^{\pm 1},T_3^{\pm 1}].\]
Then $\Gamma$ is defined as the pull back via $(h_1,h_2,h_3)$ of a particular local system over $(\rr/\zz)^3$ with fibre the free rank $1$ module over $\cR$.
For a commutative ring $\cS$ and a homomorphism $\sigma:\cR \to \cS$, let $\Gamma_\sigma$ denote the induced local system of $\cS$ modules. 

The instanton homology group $I^\sharp(Y,L;\Gamma_\sigma)_\omega$ is defined as the Floer homology of $\cB^\sharp(Y,L)_\omega$ with a perturbed Chern Simons functional and with the local system $\Gamma_\sigma$. (In \cite{km_iabnh}, there is an additional map $h_0:\cB^\sharp(Y,L)_\omega \to \rr/\zz$ coming from taking holonomy along the link itself, and $\cR$ is defined to be $\ff_2[\zz^4]$, but for our purposes, we will only be using the local system coming from $h_1,h_2$, and $h_3$.)

\subsection{Maps induced by cobordisms with dots}

We now review the funcatoriality of $I^\sharp(Y,L;\Gamma_\sigma)_\omega$. Keeping previous notation, let $\sigma:\cR \to \cS$ be a map of commutative rings.

For $i = 1,2$, let $Y_i$ denote a closed, oriented 3-manifolds, with a link $L_i$ and a 1-manifold $\omega_i$ embedded in $Y_i$ with boundary on $L_i$ and otherwise not intersecting $L_i$.

For a cobordism of pairs $(X,S)$ from $(Y_1,L_1)$ to $(Y_2,L_2)$, and $\omega$ a $2$ manifold with corners, whose boundary pieces are $\omega_1$ and $\omega_2$ in $Y_1$ and $Y_2$ respectively, together with arcs and circles in $S$, then there is an induced map
\[I^\sharp(X,S;\Gamma_\sigma)_\omega:I^\sharp(Y_1,L_1;\Gamma_\sigma)_{\omega_1} \to I^\sharp(Y_2,L_2;\Gamma_\sigma)_{\omega_2}\]
of $\cS$ modules. 

This functoriality can be extended to morphisms given by cobordisms of pairs with dots on the surfaces. That is, for a cobordism of pairs $(X,S)$, define a \textit{dot} on $S$ to be an interior point $p \in S$ along with an orientation of $T_pS$. Then for dots $p_1, p_2, \ldots, p_d$ on $S$, there is an induced map of $\cS$ modules
\[I^\sharp(X,S, p_1, p_2, \ldots, p_d;\Gamma_\sigma)_\omega:I^\sharp(Y_1,L_1;\Gamma_\sigma)_{\omega_1} \to I^\sharp(Y_2,L_2;\Gamma_\sigma)_{\omega_2}.\]

In our computations, we will always have $Y_1,Y_2 = S^3$ and $X = S^3 \times [0,1]$. Moreover we will be using the same $\Gamma_\sigma$. Thus, we will denote our cobordisms by
\[I^\sharp(S, p_1, p_2, \ldots, p_d)_\omega =I^\sharp(X,S, p_1, p_2, \ldots, p_d;\Gamma_\sigma)_\omega\]

\subsection{Properties of the induced maps}
\label{sec:I_properties}

Before going over some of the properties of the maps of $\cS$ modules induced by cobordisms, let us recall two particular elements of the rings $\cR$ and $\cS$. Writing $\cR = \ff_2[T_1^{\pm1},T_2^{\pm1},T_3^{\pm1}]$, the elements $P$ and $Q$ are given by 
\[P = T_1T_2T_3 + T_1T_2^{-1}T_3^{-1} + T_1^{-1}T_2T_3^{-1}+ T_1^{-1}T_2^{-1}T_3\]
and
\[Q = T_1^2 + T_1^{-2}+T_2^2 + T_2^{-2}+T_3^2 + T_3^{-2}.\]

For $\sigma:\cR \to \cS$, the elements $\sigma(P),\sigma(Q) \in \cS$ will also be denoted $P,Q$ respectively.

\begin{enumerate}
\item \label{it:neckcutting} (Lemma 3.2 of \cite{km_iasciok}) Let $S$ be an oriented cobordism. Suppose $S'$ is obtained from $S$ by adding an internal $1$-handle connecting points $p,q \in S$, where $p,q$ both have the same orientation as $S$. Then
\[I^\sharp(S') = I^\sharp(S,p) + I^\sharp(S,q) + PI^\sharp(S).\]
Here and throughout we assume that $\omega = \emptyset$ when it is not denoted.

\item (Lemma 4.2 of \cite{km_iabnh}) Let $(S,\omega)$ be a cobordism between $(L_1,\omega_1)$ and $(L_2, \omega_2)$. Let $R_+$ and $R_-$ be the two standard embedded copies of $\rr P^2$ in $S^4$ with self intersection $+2$ and $-2$ respectively. Let $\pi$ be a disk whose boundary is the generator of $H_1(\rr P^2)$.
Then
\[I^\sharp(S \# R_+)_{\omega + \pi} = I^\sharp(S)_\omega\]
and
\[I^\sharp(S \# R_-)_{\omega + \pi} = PI^\sharp(S)_\omega.\]

\item (Section 5.5 of \cite{km_unknot}, Section 2.2 of \cite{km_adoihfw}, Section 5.3 of \cite{km_iabnh}, ``Kunneth formula for split links'') Let $L$ be a split link, so that $L = L_0 \smallcoprod L_1$, and $L_0$ and $L_1$ are contained in disjoint balls in $S^3$. Then
\[I^\sharp(L) \simeq I^\sharp(L_0) \otimes I^\sharp(L_1),\]
and this is natural with respect to cobordisms with dots.

This is shown in Section 5.5 of \cite{km_unknot} using a version of excision without local coefficients, Hopf link instead of a $\theta$ web, and without dots. There is an argument in Section 2.2 of \cite{km_adoihfw} for why it does not matter whether one uses a $\theta$ web or a Hopf link, and it is explained in Section 5.3 of \cite{km_iabnh} why it still works with local coefficients. The proof of functoriality in \cite{km_unknot} carries over with no problems to the situation of cobordisms with dots.

\item (Section 5.2 of \cite{km_iabnh}) Let $U_\ell$ be the $\ell$ component unlink. Then $I^\sharp(U_0)$ is a free module of rank $1$ over $\cS$, which we write as $I^\sharp(U_0) = \cS u_0$, and $I^\sharp(U_1)$ is the free module over $\cS$ of rank $2$, which we write as $I^\sharp(U_1) = \cS u_+ \oplus \cS u_-$. For $D$ the standard disk viewed as a cobordism from the empty link to the unknot, and $q$ a point with orientation compatible with the choice of orientation of the knot, 
\[I^\sharp(D)(u_0) = u_+ \text{ and } I^\sharp(D, q)(u_0) = u_-.\]

Moreover, if $D_o$ is the standard disk viewed as a cobordism from the unknot to the empty link, and $q$ a point with orientation compatible with the choice of orientation of the knot,
\[I^\sharp(D_o)(u_-) = 1 \text{, } I^\sharp(D_o)(u_+) = 0 \text{, } I^\sharp(D_o,q)(u_+) = 1 \text{, } I^\sharp(D_o,q)(u_-) = P.\]

For $U_\ell$, by the previous point, we have
\[I^\sharp(U_\ell) = (\cS u_+ \oplus \cS u_-)^{\otimes \ell}.\]

\item (Section 5.4 of \cite{km_iabnh}) Let $m$ and $\Delta$ denote the standard ``pair of pants'' cobordisms between the two component unlink $U_2$ and the unknot $U_1$, the merge,
\[m:U_2 \to U_1\]
and the split
\[\Delta:U_1 \to U_2.\]
The map on $I^\sharp$ induced by $m$ (with no dots) is given by
\begin{align} \label{fla:merge}
\begin{split}
    u_{+} \otimes u_{+}     &\mapsto u_{+} \\
    u_{\pm} \otimes u_{\mp} &\mapsto u_{-} \\
    u_{-} \otimes u_{-}     &\mapsto P u_{-} + Q u_{+},
\end{split}
\end{align}
and the map induced by $\Delta$ (with no dots) is given by
\begin{align} \label{fla:split}
\begin{split}
    u_{+}  &\mapsto u_{+}\otimes u_{-} + u_{-}\otimes u_{+} +
   Pu_{+}\otimes u_{+}\\
   u_{-}  &\mapsto u_{-}\otimes u_{-} + Q u_{+} \otimes u_{+}.
\end{split}
\end{align}

\end{enumerate}

\section{A technical result for instantons}
\label{sec:techI}

In this section we prove the main technical result for instanton Floer homology $I^\sharp$ which we will use to prove Theorem \ref{thm:main}.
To do so, we will use a classical result in Morse theory, Lemma \ref{lem:break} below.
We state it in the most convenient form for us, and give a quick sketch its proof.


\begin{definition}
Given a knot $K$ in $S^3$ and a band $B$ for $K$, i.e.~an embedded rectangle $B$ in $S^3$ which intersects $K$ in two opposite sides, we say that $B$ is \emph{orientable with respect to $K$} if the knot $K$ and the result of band surgery on $K$ along $B$ can be given coherent orientations (equivalently, if surgering $K$ along $B$ gives a 2-component link).
\end{definition}

\begin{lemma}
\label{lem:break}
Let $\Sigma \subset I \times S^3$ be a non-orientable cobordism between knots $K$ and $K'$ with $m$ local minima, $b$ saddles, and $M$ local maxima. Then, after an isotopy rel boundary, we can break it into a sequence of cobordisms as follows.
\begin{enumerate}
\item \label{it:B} 
$m$ births (from $K_1=K$ to $L_1$);
\item \label{it:BS} 
$m$ band surgeries that join the various components of the link (from $L_1$ to $K_1'$);
\item \label{it:OS} 
$b-(m+M+1)$ band surgeries orientable with respect to $K_1'$ (this cobordism ends with a knot or a 2-component link $L'$);
\item \label{it:US} 
$1$ band surgery unorientable with respect to $K_1'$ (this cobordism goes from $L'$ to a knot $K_2'$);
\item \label{it:DS} 
$M$ band surgeries that split the knot $K_2'$ into $M+1$ components;
\item \label{it:D} 
$M$ deaths.
\end{enumerate}

Moreover, in this decomposition, the attaching arcs of the $b$ bands on $K_1'$ can be assumed to be all disjoint, and we can assume that both attaching arcs of the unorientable band are already contained in $K_1$.

\end{lemma}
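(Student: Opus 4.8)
The plan is to realize the decomposition through a sequence of isotopies rel $\partial$ that put the height function $f\colon\Sigma\to I$ (the restriction of the projection $I\times S^3\to I$) into the stated normal form; I will only sketch it, since each step is a routine manipulation of Morse functions or of band presentations of surfaces, and the orientable analogue is classical (cf.\ \cite{JMZ}). First perturb $\Sigma$ so that $f$ is Morse with distinct critical values, and invoke the standard rearrangement procedure, which lets one push index-$0$ critical points below index-$1$ critical points and the latter below index-$2$ critical points: after this, all $m$ minima lie below all $b$ saddles, which lie below all $M$ maxima. Because a birth introduces an unknotted, unlinked circle, just above the births $\Sigma$ meets a level set of $f$ in the link $L_1 = K\sqcup U_m$, with the $m$ new circles sitting in disjoint balls away from $K$; dually, just below the deaths one sees $K'\sqcup U_M$.

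Next I would put the saddle region into banded form. The part of $\Sigma$ between the births and the deaths is a cobordism from $L_1$ to $K'\sqcup U_M$ built from $b$ bands attached at distinct heights. Pushing each band down to a common level, and removing by finger isotopies every intersection between the foot of a band and the cores of the bands below it, one may assume that all $b$ bands are attached \emph{simultaneously} to $L_1$ along $2b$ pairwise disjoint arcs --- the familiar ``banded link'' normal form. Two consequences will be used repeatedly: the bands may now be carried out in any order, and after performing any sub-collection of them the feet of the remaining bands are still disjoint arcs on the resulting link.

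The core of the proof is the ordering of the bands and the extraction of a single unorientable one. We may assume $\Sigma$ connected (otherwise argue componentwise); then the graph on $\pi_0(L_1)$ whose edges are the $b$ bands is connected, so it has a spanning tree, and the $m$ bands corresponding to its edges, performed first in an order that grows the tree outward from $K$, are all genuine merges and produce a knot $K_1'$ --- items (1)--(2). The remaining $b-m$ bands have pairwise disjoint feet on $K_1'$; fix an orientation of $K_1'$. A surface built from $K_1'\times I$ by attaching only bands orientable with respect to $K_1'$ is orientable, so, since $\Sigma$ is not, at least one of the $b-m$ bands is unorientable with respect to $K_1'$. The key claim --- and the step I expect to be the main obstacle, as it is precisely where the argument must depart from \cite{JMZ} --- is that one can arrange for \emph{exactly one} of them, call it $B^{\mathrm u}$, to be unorientable with respect to $K_1'$: this amounts to consolidating all of $\Sigma$'s non-orientability into a single band attached along two arcs of $K_1'$, which one carries out by a surface-surgery argument built on the classification of surfaces --- in particular Dyck's relation $\mathbb{RP}^2\#\mathbb{RP}^2\#\mathbb{RP}^2\cong\mathbb{RP}^2\#T^2$, used to trade a pair of excess crosscaps for orientable genus, i.e.\ for two bands orientable with respect to $K_1'$ --- all the while keeping the band feet disjoint.

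Finally I would fix the order of the last $b-m$ bands and locate $B^{\mathrm u}$. Tracing the $M$ dying circles downward from the deaths, each is detached from the rest by some band; choosing one such band per dying circle singles out $M$ of the bands (the ``split'' bands), whose removal turns $K'\sqcup U_M$ back into a knot $K_2'$. One then performs, in this order: the remaining $b-(m+M+1)$ bands, which are orientable with respect to $K_1'$ and carry $K_1'$ to a link $L'$ with at most two components (item (3)); then $B^{\mathrm u}$, carrying $L'$ to the knot $K_2'$ --- consistent, since either $L'$ is a knot and $B^{\mathrm u}$ is unorientable on it, or $L'$ has two components and $B^{\mathrm u}$ merges them (item (4)); then the $M$ split bands, splitting $K_2'$ into $M+1$ components (item (5)); and then the $M$ deaths (item (6)). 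The component counts are forced by the bookkeeping above, disjointness of the attaching arcs on $K_1'$ was arranged in the banded-form step, and since $B^{\mathrm u}$ is disjoint from every other band we may place the $m$ births away from its feet --- after deleting, by a finger isotopy, any gradient trajectory from a minimum into the saddle $B^{\mathrm u}$ --- so that both feet of $B^{\mathrm u}$ already lie on $K$ and are undisturbed by items (1)--(3).
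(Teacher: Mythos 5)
Your overall skeleton (Morse rearrangement so births come first and deaths last, a banded normal form with disjoint feet, a spanning-tree argument producing the $m$ merge bands and the knot $K_1'$, split bands traced down from the dying circles, and sliding the feet of the unorientable band so that they lie on $K_1$) matches the paper's sketch. However, at the one step that is genuinely new compared with the orientable case --- arranging that \emph{exactly one} band is unorientable with respect to $K_1'$, i.e.\ items \eqref{it:OS} and \eqref{it:US} --- your proposal does not actually give an argument. You flag it as the main obstacle and then invoke ``a surface-surgery argument built on the classification of surfaces, in particular Dyck's relation.'' That relation is a statement about abstract homeomorphism types of surfaces; the lemma requires an isotopy of the embedded cobordism $\Sigma\subset I\times S^3$ \emph{rel boundary}, equivalently a change of band presentation of the \emph{same} embedded surface. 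Performing surgery on $\Sigma$ to trade two crosscaps for a handle produces, a priori, a different embedded surface (and even a different abstract surface unless done along suitable curves), and nothing in the proposal explains how the crosscap-for-handle trade is realized by moves (band slides, isotopies of attaching arcs) that fix $\Sigma$ up to isotopy rel boundary while keeping the $2b$ feet disjoint and the numbers of minima, saddles and maxima unchanged. So the heart of the lemma is left unproved.

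The paper closes exactly this gap with a concrete band-slide argument, which you may want to compare with: pick a band $B$ that is unorientable with respect to $K_1'$ and slide it, following the surgeries, over the other bands; each such band slide is an isotopy of $\Sigma$ rel boundary. Sliding $B$ over a band that is orientable with respect to $K_1'$ keeps $B$ unorientable, while sliding it over an unorientable band makes $B$ orientable, and the connectivity of $K_2'$ forces the latter to happen (since $K_2'\setminus B$ consists of two arcs). Hence each pass strictly decreases the number of unorientable bands, and repeating leaves exactly one, after which its endpoints are slid along the link away from the other bands so that both attaching arcs may be taken in $K_1$. Some of your remaining bookkeeping (that the chosen $M$ split bands are distinct from $B^{\mathrm u}$, and that the intermediate link $L'$ has at most two components) is also asserted rather than checked, but those are minor compared with the missing consolidation step.
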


\begin{proof}[Sketch of the proof]

We can arrange all births to appear first and all deaths to appear last (steps \eqref{it:B} and \eqref{it:D}). We can also find bands that connect the various components (steps \eqref{it:BS} and \eqref{it:DS}). Thus, we can restrict to the part of the cobordism between $K_1'$ and $K_2'$, which consists of saddles (i.e., band surgeries). Note that both $K_1'$ and $K_2'$ are knots.

If all bands are orientable with respect to $K_1'$, then all $\Sigma$ would be orientable, so there is at least one band unorientable with respect to $K_1'$.

Arrange for all bands from $K_1'$ to $K_2'$ to appear all at the same time.

If there is more than one band unorientable with respect to $K_1'$, pick one of them (call it $B$) and slide it following the surgery of $K_1'$ along all the other bands. When $B$ slides over an orientable band, it stays unorientable. When $B$ slides over an unorientable band, it becomes orientable. Note that eventually it must slide over an unorientable band because $K_2'$ is connected, so $K_2'\sm B$ consists of just two arcs.

Repeat until you have only one unorientable band left.

If $B$ is the unique unorientable band, then you can slide its endpoints along $L'$ so that they are disjoint from all the other (oriented) bands, so we can think of it as in $K_1$.
\end{proof}

The main technical result of this section, needed to prove Theorem \ref{thm:main}, is the following Proposition.

\begin{propn}
\label{propn:main_technical}
\label{prop:I}
Let $S$ be a cobordism from $K$ to $K'$ with $m$ local minima, $b$ saddles, and $M$ local maxima. Then there is a surface $\omega$ that meets $S$ cleanly and only at $\partial \omega \subset S$, whose boundary is a circle in $S$ such that for $\overline{\omega}$ its mirror, we have
\begin{equation}
\label{eq:I}
P^M I^\sharp(\overline{S} \circ S)_{\omega \cup \overline{\omega}} = P^{b-m}\Id.
\end{equation}
\end{propn}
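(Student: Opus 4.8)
The plan is to reduce the computation of the composed cobordism map to a sequence of elementary pieces using Lemma \ref{lem:break}, and then evaluate each piece using the properties listed in Section \ref{sec:I_properties}. First I would apply Lemma \ref{lem:break} to decompose $S$ (after isotopy rel boundary) into the six stages: $m$ births, $m$ joining bands, $b-(m+M+1)$ orientable saddles, one unorientable saddle, $M$ splitting bands, and $M$ deaths, with all attaching arcs disjoint and the unorientable band already attached along $K_1$. The key point is that $\overline S \circ S$ is then built from the concatenation of this decomposition with its mirror, so it suffices to understand: (i) the ``cap off a birth with a death'' pieces, (ii) the effect of a saddle followed by its mirror saddle, and (iii) crucially, what happens with the single unorientable band. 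For the orientable portion, this is exactly the situation handled in \cite{JMZ} transplanted to $I^\sharp$: a band surgery composed with its reverse, when capped off appropriately, contributes a factor that can be computed from the merge and split formulas \eqref{fla:merge} and \eqref{fla:split} together with the unit/counit relations from property (4). The bookkeeping should produce the $P^{b-m}$ on the right-hand side and the $P^M$ absorbing the maxima, modulo the choice of $\omega$.

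The heart of the matter, and the step I expect to be the main obstacle, is handling the unorientable band. A neck-cutting / dot manipulation as in property (1) does not directly apply because the two feet of the unorientable band sit on the same circle with incompatible local orientations, so one cannot simply write $I^\sharp(S') = I^\sharp(S,p)+I^\sharp(S,q)+PI^\sharp(S)$. The idea is to use property (2): the unorientable band, after being slid so both feet lie on $K_1$, can be absorbed into a connect sum with a standard $\rr P^2$ in $S^4$ — specifically, performing the band surgery and then undoing it in the mirror creates an embedded $\rr P^2$ (of some self-intersection $\pm 2$), and choosing $\omega$ to be the disk $\pi$ bounding the generator of $H_1(\rr P^2)$ converts the contribution of that component into multiplication by $P$ (for the $R_-$ case) via $I^\sharp(S\# R_-)_{\omega+\pi}=PI^\sharp(S)_\omega$. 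So the surface $\omega$ in the statement is exactly this $\pi$, pushed into $S$, and $\overline\omega$ is the corresponding one on the mirror side; the circle $\partial\omega\subset S$ is the generator of $H_1$ of the projective plane summand. The self-intersection signs on the two sides should be opposite (one $+2$, one $-2$) because mirroring reverses orientation, so the two $\rr P^2$ summands together either cancel a sign or combine to give one factor of $P$; I would need to check the bundle/orientation conventions carefully here, since this is precisely where the ``more complicated decorations'' the introduction warns about enter.

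After isolating the unorientable contribution, I would assemble the full computation as follows. Write $\overline S \circ S$ with the decomposition of Lemma \ref{lem:break} and its mirror; slide everything so that the births-then-deaths cancel up to the $U_\ell$ Kunneth factors of property (3); use the merge and split formulas to see that each orientable saddle paired with its mirror contributes exactly one factor of $P$ to one side of the desired identity, giving $P^{b-m}$ after accounting for the $m$ joining bands and the $M$ splitting bands (the $M$ splits, paired across the mirror with the $M$ deaths/births, are what force the $P^M$ prefactor on the left); and finally use the $\rr P^2$-absorption to replace the single unorientable band and its mirror by the stated $\omega\cup\overline\omega$, with the net effect being the claimed equality $P^M I^\sharp(\overline S\circ S)_{\omega\cup\overline\omega}=P^{b-m}\Id$. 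The remaining work is then purely combinatorial power-of-$P$ counting, which I would organize by tracking the exponent contributed by each of the six stages; the genuinely new input beyond \cite{JMZ} is the single application of property (2), and verifying that the resulting $\omega$ indeed meets $S$ cleanly only along $\partial\omega$ is a matter of arranging the connect-sum region to be a small ball disjoint from the rest of the cobordism.
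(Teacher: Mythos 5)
The orientable bookkeeping in your third paragraph is broadly fine and matches the paper (the $P^M$ comes from tube-cutting applied to tubes joining each death cap to its mirror, each band paired with its mirror contributes one $P$, spheres with a dot handle the births), but the central step --- your treatment of the unorientable band via Property (2) --- has a genuine gap. The blow-up relation $I^\sharp(S\# R_\pm)_{\omega+\pi}=I^\sharp(S)_\omega$ or $P\,I^\sharp(S)_\omega$ applies only when the $\rr P^2$ is the \emph{standard} $R_\pm\subset S^4$, connect-summed in a small ball disjoint from the rest of the cobordism, with $\pi$ an embedded disk in that ball. The nonorientable band composed with its mirror does not produce such a configuration: it produces a cylinder $I\times K_1$ with a nonorientable tube running along the band $B$, and the ``punctured Klein bottle'' piece necessarily contains a strip of the cylinder following an arc $a$ of $K_1$ between the two attaching regions. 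The relevant circle on $S$ is essentially $a\cup c$ ($c$ the diagonal of the band), which can be an arbitrary knot, knotted, twisted, and linked with the remaining arc $b$ of $K_1$; in general it bounds no embedded disk in the complement of $S$, and no ball engulfs the nonorientable summand while avoiding the rest of the surface. So the disk $\pi$ you propose for $\omega$ typically does not exist, and ``arranging the connect-sum region to be a small ball disjoint from the rest of the cobordism'' is exactly what cannot be done --- this is the difficulty the paper's Claims 3.3--3.4 are built to overcome, not a final verification.

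Concretely, the paper replaces your single application of Property (2) by two ingredients you are missing. First, $\omega$ is not a disk but a surface built from a Seifert surface of $a\cup c$, whose interior intersections with $b$ are removed by twisting $b$ around the band ends and tubing off cancelling pairs of intersection points; this is what makes $\omega$ meet $S$ cleanly only along $\partial\omega$. Second, since no blow-up formula is available for this non-standard embedding, the evaluation of $I^\sharp(\overline S\circ S)_{\omega\cup\overline\omega}$ proceeds differently: one uses that the map depends only on $[\partial\omega]\in H_1(\Sigma;\Z/2)$, shows $[\partial\omega]=[\gamma]$ for $\gamma$ the core of the nonorientable tube, and then inserts a flip in the tube. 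The flip does not change the induced map (Claim \ref{claim:omega_between_flip}, which itself requires a merge/split computation and the standard Klein bottle relation to show the flip with $\omega$-arcs acts as the identity --- without $\omega$ it does not), but it renders the tube orientable and $\partial\omega$ null-homologous, after which the ordinary neck-cutting relation with empty $\omega$ yields the factor $P$. Your proposal has no substitute for this flip argument, and without it the key equality $I^\sharp(\overline{S_u})_{\overline\omega}\circ I^\sharp(S_u)_\omega=P\cdot\Id$ is not established.
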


Towards this goal, let us start by doing some computations of maps induced by cobordisms with $\omega$.

First let us understand the dependence of $I^\sharp(\Sigma)_\omega$ on $\omega$, when $\omega$ is a surface with boundary on $\Sigma$, which intersects $\Sigma$ cleanly and only at $\partial \omega \subset S$. Note that  for a link $L$ in $S^3$, up to isomorphism, $I^\sharp(S^3, L)_\omega$ depends only on the homology of $[\partial \omega] \in H_0(L; \zz/2)$, because it counts flat connections and instantons on spaces determined by the homology class. 

Similarly, $I^\sharp(\Sigma)_\omega$ depends only on the homology class $[\partial \omega] \in H_1(\Sigma, \zz/2)$. This is because the map counts instantons on a moduli space built from $\Sigma$, $[\partial \omega] \in H^1(\Sigma, \zz/2)$ and $[\omega, \partial\omega] \in H_2(X,\Sigma, \zz/2)$, and $H_1(\Sigma) \simeq H_2(X, \Sigma)$, for $X = S^3 \times \rr$.


From here, we can see that for a cylinder $\Sigma$ and $\omega$ given by either a small disk or a small tube with boundary on $\Sigma$, as in Figure \ref{fig:cylinders_with_omega}, $I^\sharp(\Sigma)_\omega$ induces the identity: here $\partial \omega$ is trivial in $H_1(\Sigma)$, and $[\omega,\partial \omega]$ is also trivial in $H_2(X, \Sigma,\zz/2)$. 

\begin{figure}[ht!]
\centering
\includegraphics{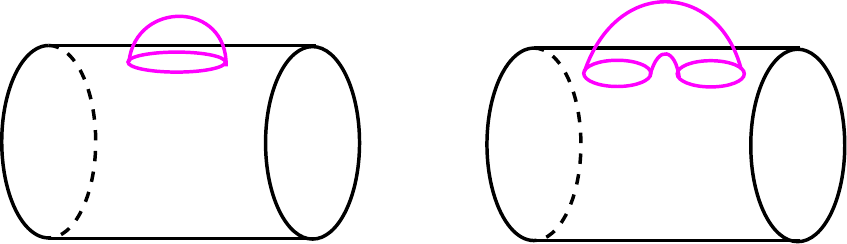}
\caption{\label{fig:cylinders_with_omega} Cylinders with the magenta surfaces depicting $\omega$.}
\end{figure}

When the cobordism in Figure \ref{fig:null_to_arc} is composed with its inverse, the map induced is the identity. Moreover, up to isomorphism $I^\sharp(U, \omega)$ depends only on $[\partial \omega] \in H_0(U; \zz/2)$, so the two ends of the cobordism have the same instanton Floer homology. Thus, the cobordism in Figure \ref{fig:null_to_arc} induces an isomorphism. 
\begin{figure}[ht!]
\centering
\includegraphics{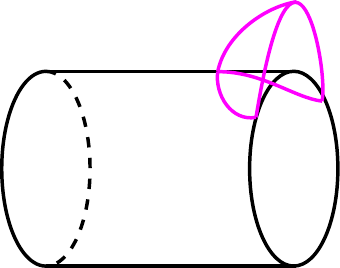}
\caption{\label{fig:null_to_arc}}
\end{figure}

We will call the two generators of the instanton Floer homology of the unknot with an arc $\omega$ on the right, which is depicted in Figure \ref{fig:unknot_with_arc}, $x_+$ and $x_-$, so that in the $u_\pm$ and $x_\pm$ bases, the cobordism depicted in Figure \ref{fig:null_to_arc} is the identity matrix.

\begin{figure}[ht!]
\centering
\includegraphics{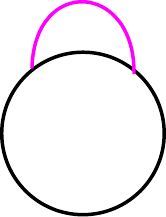}
\caption{\label{fig:unknot_with_arc}}
\end{figure}

The cobordism from the two component unlink to itself induced by two standard cylinders with $\omega$ as a tube between them, as depicted in Figure \ref{fig:cyls_omega_between}, induces the identity map, because in this situation, $(\omega, \partial\omega)$ is trivial in homology in $(S^3 \times I, \Sigma)$.
\begin{figure}[ht!]
\centering
\includegraphics{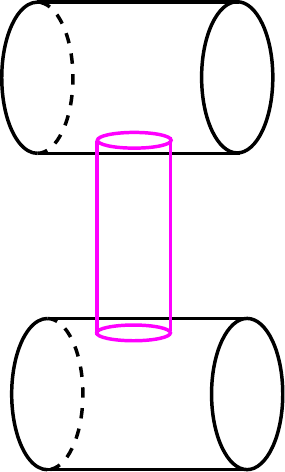}
\caption{\label{fig:cyls_omega_between}}
\end{figure}

The same is true for the map depicted in Figure \ref{fig:cyls_pants_omega} precomposed with its mirror. Thus, the map induced by the cobordism depicted in Figure \ref{fig:cyls_pants_omega} is an isomorphism whose inverse is its mirror image. Here, we are identifying the link with $\omega$ on the right end of Figure \ref{fig:cyls_pants_omega} with the unlink with empty $\omega$ via the isomorphism induced by Figure \ref{fig:null_to_arc}, and the link with $\omega$ on the left has isomorphic instanton Floer homology. 

\begin{figure}[ht!]
\centering
\includegraphics{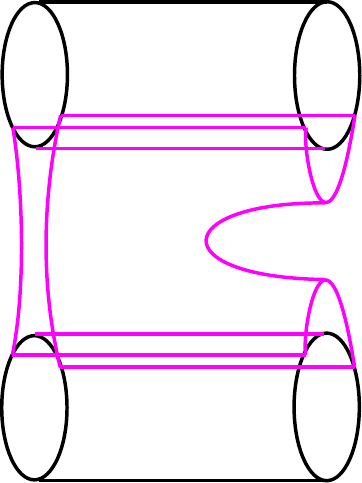}
\caption{\label{fig:cyls_pants_omega}}
\end{figure}

For the link on the left in Figure \ref{fig:cyls_pants_omega}, its homology is then a free module of rank $4$ over $\cS$. Let $x_{++}, x_{+-}, x_{-+}, x_{--}$ be a basis of this homology so that if we choose the $x_+ \otimes x_+, x_+ \otimes x_-, x_- \otimes x_+, x_- \otimes x_-$ basis for the two component unlink on the right, the matrix the cobodism induces is the identity. (Recall that $x_\pm$ are the basis elements of the instanton homology of the unknot with an arc, so that the cobordism of Figure \ref{fig:null_to_arc} induces the identity matrix.

A central step in our proof will be dealing with a cobordism that flips an unknot, but does not change $\omega$. To describe this, consider a link $L$ with decoration $\omega$, which has an unknot component $U$ that is split from the rest of $L$; we may isotope $U$ so that it is a geometric circle. Suppose that $\omega$ has two endpoints on $U$, $p$ and $q$, which we may isotope to be the endpoints of a diameter of $U$. Then the flip cobordism is a cobordism in $I \times S^3$ that is traced by the isotopy obtained by rotating $U$ by $\pi$ about the diameter $pq$. So this is an isotopy that does not change $\omega$ and reverses the orientation of one of the two components.


\begin{claim}
The map on the instanton homology of $U_2$ with $\omega$ consisting of two arcs, each going between the two components that results from flipping one of the unknots (as described above) in a way that does not change $\omega$ is the identity map. 
\label{claim:omega_between_flip}
\begin{figure}[ht!]
\centering
\includegraphics{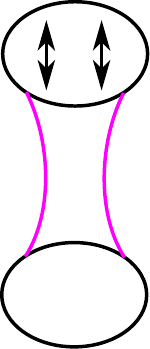}
\caption{\label{fig:omega_between_flip}}
\end{figure}

\end{claim}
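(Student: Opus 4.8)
The plan is to recognise the flip cobordism map as a naturality isomorphism and then to see that it is the identity because, as recalled above, $I^\sharp$ of these objects is built from moduli spaces determined by $\zz/2$-homological data alone.

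First I would put $\omega$ into standard position: up to isotopy we may assume that the two endpoints $p$ and $q$ of $\omega$ lying on the component $U$ to be flipped are the endpoints of the rotation axis $\ell$, and that the two arcs of $\omega$ leave $p$ and $q$ running along $\ell$ before travelling out to the other component. Choose an ambient isotopy $\phi_t$ of $S^3$, supported in a small ball around $U$ disjoint from the other component, realising the rotation of $U$ by $t\pi$ about $\ell$. With this choice of $\omega$, each $\phi_t$ fixes $\omega$ pointwise and fixes the other component pointwise, and $\phi_1$ carries $U$ back to itself; thus $\phi_1$ is a diffeomorphism of the triple $(S^3,U_2,\omega)$, and one may also arrange it to fix the basepoint and the $\theta$-web. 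The flip cobordism is the trace of the isotopy $\phi_t|_{U_2}$, so by functoriality of $I^\sharp$ under ambient isotopy its induced map equals the naturality isomorphism $(\phi_1)_*$ acting on $I^\sharp(U_2)_\omega$.

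It then remains to show $(\phi_1)_* = \id$. The diffeomorphism $\phi_1$ is orientation-preserving and preserves $U_2$ and $\omega$ (indeed $\omega$ pointwise), so it acts trivially on $H_0(U_2;\zz/2)$, on the class $[\partial\omega]$, and on every piece of homological data entering the construction of $I^\sharp(S^3,U_2)_\omega$. Since, as recalled above, this group counts flat connections and instantons on spaces determined by exactly that homological data, a symmetry fixing the data induces the identity on the resulting Floer homology. Hence the flip map is the identity, as claimed.

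The delicate point is the last step. The flip cobordism is genuinely non-trivial as an embedded surface — it is not isotopic rel boundary to the product cobordism, since the flip loop represents the nonzero element of $\pi_1$ of the space of unknots in $S^3$ — so the claim is not formal, and it really does rest on the fact that for an unlink carrying arc-decorations nothing survives into $I^\sharp$ beyond the $\zz/2$-homological data, so that the non-trivial flip diffeomorphism nevertheless acts as the identity. Once one grants this input (which is exactly the principle already invoked in this section for cylinders with a small disk or tube $\omega$), the first two steps are routine.
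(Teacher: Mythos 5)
There is a genuine gap, and it sits exactly where you acknowledge the argument is ``delicate.'' Your final step rests on the principle that a diffeomorphism of $(S^3, U_2, \omega)$ which fixes all the $\zz/2$-homological data must act as the identity on $I^\sharp(U_2)_\omega$. That principle is false, and the paper's own discussion contains the counterexample: the flip on a single unknot with \emph{no} $\omega$ is also the trace of an ambient isotopy, also preserves every piece of homological data (there is nothing to preserve), and yet by Proposition 5.8 of \cite{km_iabnh} it does \emph{not} induce the identity --- it has a non-trivial off-diagonal entry in the $u_+, u_-$ basis. The statements recalled earlier in the section about homological data concern something weaker: for a \emph{fixed} underlying cobordism surface, the induced map depends on $\omega$ only through $[\partial\omega]\in H_1(\Sigma;\zz/2)$ and $[\omega,\partial\omega]\in H_2(X,\Sigma;\zz/2)$. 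They do not say that the map is insensitive to the embedded cobordism itself, nor that a diffeomorphism acting trivially on homology induces the identity; indeed you correctly note that the flip cobordism is not isotopic rel boundary to the product, which is precisely why no formal naturality argument can close the gap.

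The actual content of the claim is that the presence of the two $\omega$ arcs running between the components kills the off-diagonal term that is present in the undecorated flip, and this requires a computation rather than a symmetry argument. The paper's proof writes the flip in the basis $x_{++}, x_{+-}, x_{-+}, x_{--}$ as $\Phi_1 \otimes \Id$, pins down three of the four entries of $\Phi_1$ by composing with caps carrying $\omega$, observes that the usual dotted-cap trick (which would reproduce the non-identity answer of \cite{km_iabnh}) is blocked because the flip moves the dot across $\partial\omega$, and then determines the last entry $b$ by computing $m \circ \Phi \circ \Delta = (b+P)\cdot\Id$ and comparing with the closed-surface relation that the Klein bottle $\rr P^2_+ \# \rr P^2_-$ with its disk decorations induces $P\cdot\Id$, forcing $b=0$. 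Your first two steps (standard position for $\omega$, realizing the flip as the trace of an ambient isotopy) are consistent with the paper's setup, but without an argument of this computational kind the conclusion $(\phi_1)_* = \id$ is unsupported.
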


\begin{proof}
By composing with the isomorphisms induced by the cobordism depicted in Figure \ref{fig:cyls_pants_omega}, if $\Phi$ is the matrix associated to the flip in the $x_{++}, x_{+-}, x_{-+}, x_{--}$ basis, then 
\[\Phi = {\left[
\begin{array}{cccc}
a & 0 & b & 0 \\
0 & a & 0 & b \\
c & 0 & d & 0 \\
0 & c & 0 & d \\
\end{array}
\right]}
\]
where $\Phi_1 = \mattwo{a}{b}{c}{d}$ is the flip on the unknot with an arc in the $x_+,x_-$ basis, depicted in Figure \ref{fig:unknot_flip}. This is because $\Phi$ is the matrix for $\Phi_1 \otimes \text{Id}$, for $\text{Id}$ the identity map, in the $x_+ \otimes x_+, x_+ \otimes x_-, x_- \otimes x_+, x_- \otimes x_-$ basis, and we are using the basis of the instanton homology of Figure \ref{fig:omega_between_flip} corresponding to this basis.

\begin{figure}[ht!]
\centering
\includegraphics{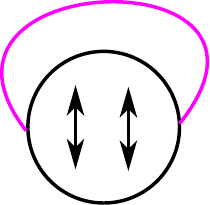}
\caption{\label{fig:unknot_flip}}
\end{figure}

Now let us compute some of the entries of $\Phi_1$. Note that if we pre-compose or post-compose $\Phi_1$ with caps like those in Figure \ref{fig:omega_caps}, we get back the cap itself. These caps induce the maps $\vecttwo{1}{0}$ and $[0,1]$, so from these compositions, we can see that $a = d = 1$ and $c= 0$. 
\begin{figure}[ht!]
\centering
\includegraphics{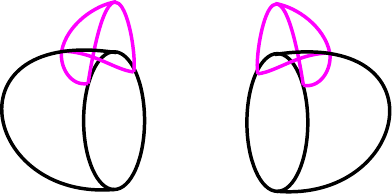}
\caption{\label{fig:omega_caps}}
\end{figure}

Note that if we did not have $\omega$, then we could do the same argument with a cap with a dot, and using the fact that doing a flip and then a cap with a dot is the same as doing a negative dot, we would be able to get the remaining entry, $b$, and recover Proposition 5.8 of \cite{km_iabnh}, in which the flip map does \textit{not} induce the identity. However, because we have $\omega$ here, this does not work: the flip changes which side of $\partial{\omega}$ the dot is on.

Going back to our computation, we now have that
\[\Phi = {\left[
\begin{array}{cccc}
1 & 0 & b & 0 \\
0 & 1 & 0 & b \\
0 & 0 & 1 & 0 \\
0 & 0 & 0 & 1 \\
\end{array}
\right]}.
\]

We would now like to show that $b$ is $0$. Consider the pair of pants cobordism with $\omega$ as two half-disks from the unlink with two arcs going between components to the unknot, as depicted in Figure \ref{fig:pants_omega_between}. Because we can precompose with isomorphisms to make a regular merge with a null-homotopic disk on top, as in Figure \ref{fig:pants_omega_disk}, we see that Figure \ref{fig:pants_omega_between} induces the same as the merge map, if we use the $x_{++}, x_{+-}, x_{-+}, x_{--}$ basis. Here we are using that the reverse of the map in Figure \ref{fig:cyls_pants_omega} is also the identity matrix with our choice of basis. 
\begin{figure}[ht!]
\centering
\includegraphics{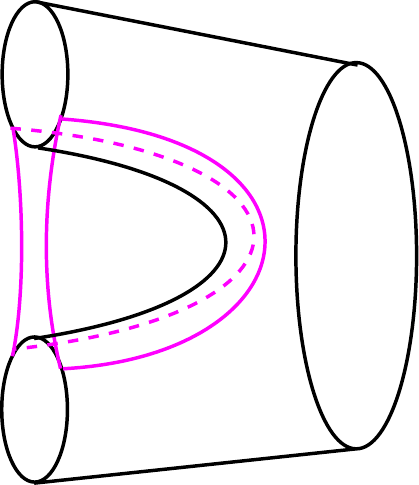}
\caption{\label{fig:pants_omega_between}}
\end{figure}

\begin{figure}[ht!]
\centering
\includegraphics{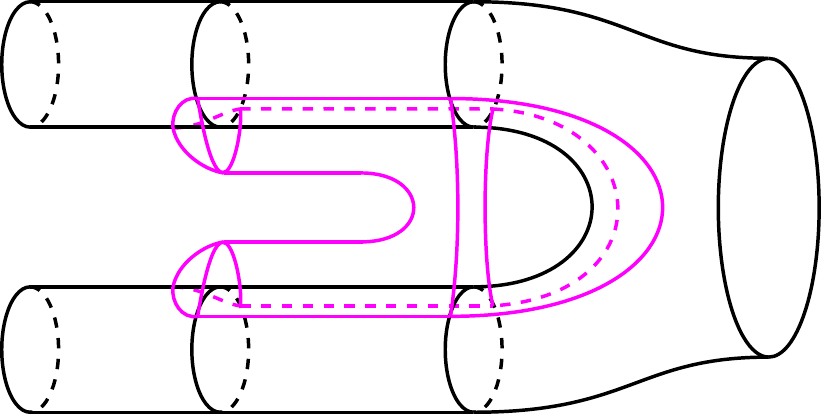}
\caption{\label{fig:pants_omega_disk}}
\end{figure}

Thus, in this basis, it induces the map
\[m = {\left[
\begin{array}{cccc}
1 & 0 & 0 & Q \\
0 & 1 & 1 & P \\
\end{array}
\right]}.\]

Similarly, the reverse of this cobordism induces the same map as $\Delta$, so it induces 
\[\Delta = {\left[
\begin{array}{cc}
P & Q \\
1 & 0 \\
1 & 0 \\
0 & 1 \\
\end{array}
\right]}.\]

Thus, composing $m \circ \Phi \circ \Delta$, we get the map
\[m \circ \Phi \circ \Delta = \mattwo{b+P}{0}{0}{b+P}.\]
However, if we compose these cobordisms, we get a Klein bottle, which is a connected sum of $\rr P^2_+$ and $\rr P^2_-$, with $\omega$ given by two disks, one on each $\rr P^2$, such that the boundary circle of the disk is the generator of $H_1(\rr P^2)$. It is shown in \cite{km_iabnh} that this Klein bottle with these $\omega$ induces the map $P \cdot \Id$, so $b = 0$, as desired.
\end{proof}

\begin{claim}
Let $S \subset S^3 \times [0,2]$ be a cobordism from $K_1$ to $K_2$ such that in $S^3 \times [0,1]$ it is the cylinder on $K_1$ and in $S^3 \times [1,2]$ it consists of adding a single non-orientable band. More precisely, we may consider a band $B \subset S^3$ with vertices $A_1A_2A_3A_4 \subset S^3$ with $A_1A_2$ and $A_3A_4$ on $K_1$, as in Figure \ref{fig:K1_with_band}. In $S^3 \times [1,2]$, $S$ then looks like $(K_1 \backslash (A_1A_2 \cup A_3A_4)) \times [1,2]$ away from the band $B \times [1,2]$, and within the band it goes from $A_1A_2 \cup A_3A_4$ at time $1$ to $A_2A_3 \cup A_4 A_1$ at time $2$.

The cobordism is depicted in frames in Figure \ref{fig:S_in_frames}. 

\begin{figure}[ht!]
\begingroup%
  \makeatletter%
  \providecommand\color[2][]{%
    \errmessage{(Inkscape) Color is used for the text in Inkscape, but the package 'color.sty' is not loaded}%
    \renewcommand\color[2][]{}%
  }%
  \providecommand\transparent[1]{%
    \errmessage{(Inkscape) Transparency is used (non-zero) for the text in Inkscape, but the package 'transparent.sty' is not loaded}%
    \renewcommand\transparent[1]{}%
  }%
  \providecommand\rotatebox[2]{#2}%
  \newcommand*\fsize{\dimexpr\f@size pt\relax}%
  \newcommand*\lineheight[1]{\fontsize{\fsize}{#1\fsize}\selectfont}%
  \ifx\svgwidth\undefined%
    \setlength{\unitlength}{125.74286702bp}%
    \ifx\svgscale\undefined%
      \relax%
    \else%
      \setlength{\unitlength}{\unitlength * \real{\svgscale}}%
    \fi%
  \else%
    \setlength{\unitlength}{\svgwidth}%
  \fi%
  \global\let\svgwidth\undefined%
  \global\let\svgscale\undefined%
  \makeatother%
  \begin{picture}(1,0.92834691)%
    \lineheight{1}%
    \setlength\tabcolsep{0pt}%
    \put(0,0){\includegraphics[width=\unitlength,page=1]{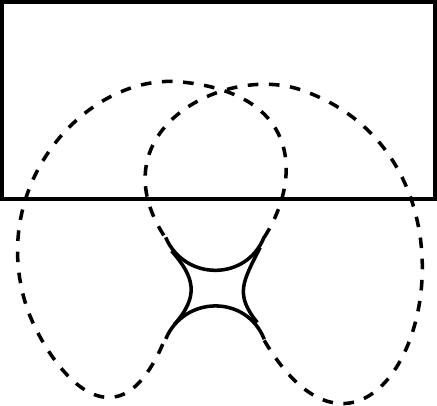}}%
    \put(0.60896444,0.33342103){\color[rgb]{0,0,0}\makebox(0,0)[lt]{\lineheight{1.25}\smash{\begin{tabular}[t]{l}\small $A_2$\end{tabular}}}}%
    \put(0.60896442,0.18303913){\color[rgb]{0,0,0}\makebox(0,0)[lt]{\lineheight{1.25}\smash{\begin{tabular}[t]{l}\small $A_3$\end{tabular}}}}%
    \put(0.27757942,0.33342101){\color[rgb]{0,0,0}\makebox(0,0)[lt]{\lineheight{1.25}\smash{\begin{tabular}[t]{l}\small $A_1$\end{tabular}}}}%
    \put(0.27757942,0.18303913){\color[rgb]{0,0,0}\makebox(0,0)[lt]{\lineheight{1.25}\smash{\begin{tabular}[t]{l}\small $A_4$\end{tabular}}}}%
    \put(0.46682341,0.24396559){\color[rgb]{0,0,0}\makebox(0,0)[lt]{\lineheight{1.25}\smash{\begin{tabular}[t]{l}\small $B$\end{tabular}}}}%
  \end{picture}%
\endgroup%

\caption{\label{fig:K1_with_band} The cobordism $S$ is cylindrical on the dotted part.}
\end{figure}

\begin{figure}[ht!]
\centering
\includegraphics{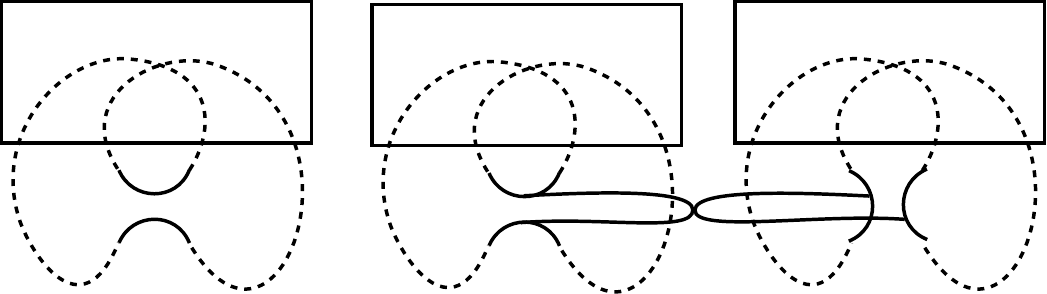}
\caption{\label{fig:S_in_frames}}
\end{figure}

Then there is a surface $\omega$ with boundary in the interior of $S$, such that $\omega$ meets $S$ only at the boundary, where they meet cleanly, such that for $\overline{S}$ the reverse of $S$, with corresponding $\overline{\omega}$, 
\[I^\sharp(\overline S)_{\overline{\omega}} \circ I^\sharp(S)_{\omega} = P \cdot \Id:I^\sharp(K_1) \to I^\sharp(K_1).\]
\label{claim:nonorientable_tube_omega}
\end{claim}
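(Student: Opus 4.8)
The plan is to reduce the claim to Claim~\ref{claim:omega_between_flip} by recognising the composite $\overline S \circ S$ as a surface that can be reshuffled, using birth/death moves and an isotopy, into the merge–flip–split composite whose value we already computed, together with an elementary $\rr P^2$-factor. Concretely, $S$ attaches a single non-orientable band $B$ to the cylinder on $K_1$, with attaching arcs $A_1A_2$ and $A_3A_4$ both on $K_1$; the composite $\overline S \circ S$ is then $K_1 \times I$ with a tube attached along $B$ (attach the band, then un-attach it). My first step is to choose $\omega$: take $\omega$ to be an annulus whose core circle lies in $S$ and is isotopic (within $S$) to the circle running across the band $B$ once and back across the adjacent portion of $K_1$. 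The key homological bookkeeping, exactly as in the paragraphs preceding Claim~\ref{claim:omega_between_flip}, is that $I^\sharp(\Sigma)_\omega$ depends only on $[\partial\omega]\in H_1(\Sigma;\zz/2)$ and $[\omega,\partial\omega]\in H_2(X,\Sigma;\zz/2)$, so I am free to slide $\partial\omega$ around on $S$ as long as I do not change these classes.

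Next I would cut the composite surface into standard pieces. Since $B$ is a band with both feet on $K_1$, attaching it and then detaching it is, up to isotopy rel $\partial$, the same as: split $K_1$ off a small unknotted circle $U$ near $B$ (a $\Delta$-type pair of pants), flip $U$ by a half-rotation (which is where the non-orientability of $B$ enters — the band being unorientable forces the regluing to differ from the identity by exactly this flip), and then merge $U$ back in (an $m$-type pair of pants), all while $\omega$ is carried along as two arcs/half-disks running between $U$ and the rest, precisely the configuration of Figures~\ref{fig:pants_omega_between}, \ref{fig:omega_between_flip}, and \ref{fig:pants_omega_disk}. The identification of the band cobordism with this split–flip–merge picture is the geometric heart of the argument and is the step I expect to be the main obstacle: one must check that the unorientability of $B$ translates exactly into the $\pi$-rotation flip (rather than a trivial regluing or a different framing), and that the chosen $\omega$ matches the two-arc decoration of Claim~\ref{claim:omega_between_flip} on the nose, including orientations of the normal bundles so that no extra dots are introduced.

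Granting that identification, the computation is immediate from what has already been established. By Claim~\ref{claim:omega_between_flip} the flip on $U_2$ with the two-arc $\omega$ is the identity, so $I^\sharp(\overline S)_{\overline\omega}\circ I^\sharp(S)_\omega = m\circ \Phi\circ \Delta$ with $\Phi=\Id$; but the same Klein-bottle degeneration used at the end of the proof of Claim~\ref{claim:omega_between_flip} — composing the $\omega$-decorated merge and split produces a $\rr P^2_+ \# \rr P^2_-$ with $\omega$ a disk on each $\rr P^2$ bounding the generator of $H_1$ — shows this composite equals $P\cdot\Id$ on $I^\sharp(K_1)$, invoking property~(2) of Section~\ref{sec:I_properties} (the $km\_iabnh$ computation that $I^\sharp(S\#R_+)_{\omega+\pi}=I^\sharp(S)_\omega$ and $I^\sharp(S\#R_-)_{\omega+\pi}=PI^\sharp(S)_\omega$). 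Two small points to finish: (i) verify that the pair-of-pants maps in the $x_{\pm\pm}$ basis really are the $m,\Delta$ of \eqref{fla:merge}--\eqref{fla:split} composed with the isomorphisms of Figures~\ref{fig:null_to_arc} and \ref{fig:cyls_pants_omega}, which is recorded in the proof of Claim~\ref{claim:omega_between_flip}; and (ii) check that $\omega$ as constructed genuinely has $\partial\omega$ a single circle in the interior of $S$ meeting $S$ cleanly only along $\partial\omega$, which follows since the core of the annulus was chosen embedded in $S$. This yields $I^\sharp(\overline S)_{\overline\omega}\circ I^\sharp(S)_\omega = P\cdot\Id$, as claimed.
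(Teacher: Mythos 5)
Your overall intuition---that the composite $\overline S\circ S$ is a cylinder with a cross-tube, that Claim \ref{claim:omega_between_flip} is the key input, and that the factor of $P$ ultimately comes from an $\rr P^2_+\#\rr P^2_-$---is the right one, but the two steps you flag as ``to be checked'' are genuine gaps rather than routine verifications, and the paper handles both quite differently. First, the construction of $\omega$: an ``annulus whose core circle lies in $S$'' is not a description of a surface meeting $S$ cleanly and only along its boundary (the core lies in the interior of the annulus), and the existence of an embedded surface in the complement of $S$ with boundary the circle you describe (once across the band, back along an arc $a$ of $K_1$) is not automatic. The paper builds it by taking a Seifert surface $F_0$ of $a\cup c$ (where $c$ is the diagonal of the band), twisting the complementary arc $b$ of $K_1$ around the endpoints $A_1,A_3$ until the interior intersections of $b$ with $F_0$ have balanced signs, and then tubing off adjacent opposite-sign pairs; this is a real step of the proof, not a remark.

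Second, and more seriously, your ``geometric heart''---that $\overline S\circ S$ is isotopic rel boundary to a split--flip--merge on a small unknotted circle near $B$, so that the global map equals $m\circ\Phi\circ\Delta$---is precisely what cannot be asserted without argument, and the paper does not argue this way. (As written the identity also does not typecheck: $m\circ\Phi\circ\Delta$ is an endomorphism of $I^\sharp(U_1)$, not of $I^\sharp(K_1)$; to globalize it via property (2) of Section \ref{sec:I_properties} you would need the Klein bottle summand of $\Sigma=\overline S\circ S$ to be the \emph{standard} $R_+\#R_-$ split off in a ball from the rest of the surface, which depends on the framing and embedding of the band and is nowhere checked.) The paper instead cuts $\Sigma$ along the co-core circle $\gamma$ of the tube, verifies that $[\partial\omega_{\Sigma}]=[\gamma]$ in $H_1(\Sigma;\zz/2)$, and inserts a flip into the tube: Claim \ref{claim:omega_between_flip}, applied to the identity-cobordism-on-$U_2$ slice of the tube, shows the induced map is unchanged. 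The inserted flip simultaneously (i) makes $\partial\omega_{\Sigma}$ null-homologous, so that $\omega$ may be discarded, and (ii) converts the cross-tube into an orientable tube, to which the neck-cutting relation (property \eqref{it:neckcutting} of Section \ref{sec:I_properties}) applies and yields $P\cdot\Id$ directly, with no need to standardize the Klein bottle or to invoke the $R_\pm$ connected-sum formula on $K_1$. To salvage your route you would have to prove standardness of the embedded Klein bottle summand, including its normal Euler numbers; the flip-plus-neck-cutting argument is designed precisely to avoid that.
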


\begin{proof}

Observe that $A_1$ and $A_3$ split $K_1$ into two parts, which we call $a$ and $b$ (these are coloured magenta and blue respectively in Figure \ref{fig:K1_with_band_coloured_ab}. Let $c$ be the diagonal on the band that goes from $A_1$ to $A_3$. 

\begin{figure}[ht!]
\centering
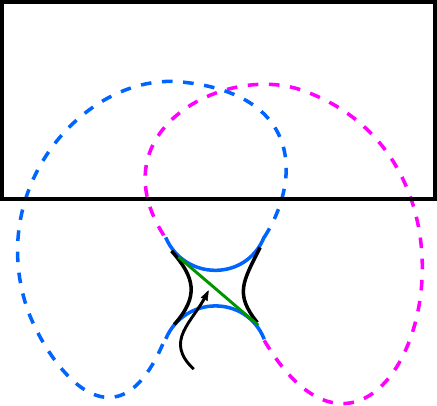
\caption{\label{fig:K1_with_band_coloured_ab} The cobordism $S$ is cylindrical on the dotted part.}
\end{figure}

Consider $a \cup c$ as a knot in $S^3$ and let $F_0$ be a Seifert surface of it. Then, $F_0$ is a surface with corners, with boundary $a \cup c$, and which meets $b$ at the ends, $A_1$ and $A_3$. We may isotope $a,b,c$ so that $F_0$ meets $b$ cleanly at the ends and transversely in the interior, as in Figure \ref{fig:F0_with_b_intersecting}

\begin{figure}[ht!]
\centering
\begingroup%
  \makeatletter%
  \providecommand\color[2][]{%
    \errmessage{(Inkscape) Color is used for the text in Inkscape, but the package 'color.sty' is not loaded}%
    \renewcommand\color[2][]{}%
  }%
  \providecommand\transparent[1]{%
    \errmessage{(Inkscape) Transparency is used (non-zero) for the text in Inkscape, but the package 'transparent.sty' is not loaded}%
    \renewcommand\transparent[1]{}%
  }%
  \providecommand\rotatebox[2]{#2}%
  \newcommand*\fsize{\dimexpr\f@size pt\relax}%
  \newcommand*\lineheight[1]{\fontsize{\fsize}{#1\fsize}\selectfont}%
  \ifx\svgwidth\undefined%
    \setlength{\unitlength}{166.07206052bp}%
    \ifx\svgscale\undefined%
      \relax%
    \else%
      \setlength{\unitlength}{\unitlength * \real{\svgscale}}%
    \fi%
  \else%
    \setlength{\unitlength}{\svgwidth}%
  \fi%
  \global\let\svgwidth\undefined%
  \global\let\svgscale\undefined%
  \makeatother%
  \begin{picture}(1,0.74410225)%
    \lineheight{1}%
    \setlength\tabcolsep{0pt}%
    \put(0,0){\includegraphics[width=\unitlength,page=1]{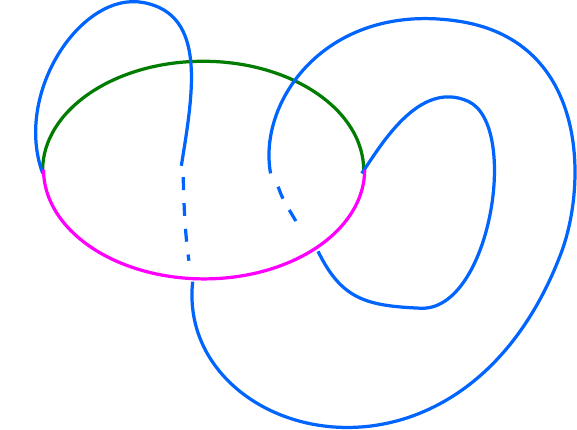}}%
    \put(-0.00151447,0.40572408){\color[rgb]{0,0,0}\makebox(0,0)[lt]{\lineheight{1.25}\smash{\begin{tabular}[t]{l}\small $A_1$\end{tabular}}}}%
    \put(0.64582718,0.40572407){\color[rgb]{0,0,0}\makebox(0,0)[lt]{\lineheight{1.25}\smash{\begin{tabular}[t]{l}\small $A_3$\end{tabular}}}}%
    \put(0.13518086,0.25306449){\color[rgb]{0,0,0}\makebox(0,0)[lt]{\lineheight{1.25}\smash{\begin{tabular}[t]{l}\small $a$\end{tabular}}}}%
    \put(0.76806835,0.60133308){\color[rgb]{0,0,0}\makebox(0,0)[lt]{\lineheight{1.25}\smash{\begin{tabular}[t]{l}\small $b$\end{tabular}}}}%
    \put(0.1573829,0.52584623){\color[rgb]{0,0,0}\makebox(0,0)[lt]{\lineheight{1.25}\smash{\begin{tabular}[t]{l}\small $c$\end{tabular}}}}%
  \end{picture}%
\endgroup%

\caption{\label{fig:F0_with_b_intersecting} Here $F_0$ is depicted as a disk though it could have higher genus.}
\end{figure}

If we choose an orientation of $F_0$ and $b$, then the intersection points may have positive or negative sign. We can increase the number of positive or negative intersections points without changing the isotopy type of the embedding of $K_1 \cup c$ into $S^3$ by twisting $b$ around $A_1$ or $A_3$, as in Figure \ref{fig:twisting_b}. Let us do this, adding either positive or negative intersections as needed until there are the same number of positive as negative interior intersection points between $b$ and $F_0$.

\begin{figure}[ht!]
\centering
\begingroup%
  \makeatletter%
  \providecommand\color[2][]{%
    \errmessage{(Inkscape) Color is used for the text in Inkscape, but the package 'color.sty' is not loaded}%
    \renewcommand\color[2][]{}%
  }%
  \providecommand\transparent[1]{%
    \errmessage{(Inkscape) Transparency is used (non-zero) for the text in Inkscape, but the package 'transparent.sty' is not loaded}%
    \renewcommand\transparent[1]{}%
  }%
  \providecommand\rotatebox[2]{#2}%
  \newcommand*\fsize{\dimexpr\f@size pt\relax}%
  \newcommand*\lineheight[1]{\fontsize{\fsize}{#1\fsize}\selectfont}%
  \ifx\svgwidth\undefined%
    \setlength{\unitlength}{188.19528322bp}%
    \ifx\svgscale\undefined%
      \relax%
    \else%
      \setlength{\unitlength}{\unitlength * \real{\svgscale}}%
    \fi%
  \else%
    \setlength{\unitlength}{\svgwidth}%
  \fi%
  \global\let\svgwidth\undefined%
  \global\let\svgscale\undefined%
  \makeatother%
  \begin{picture}(1,0.44240816)%
    \lineheight{1}%
    \setlength\tabcolsep{0pt}%
    \put(0,0){\includegraphics[width=\unitlength,page=1]{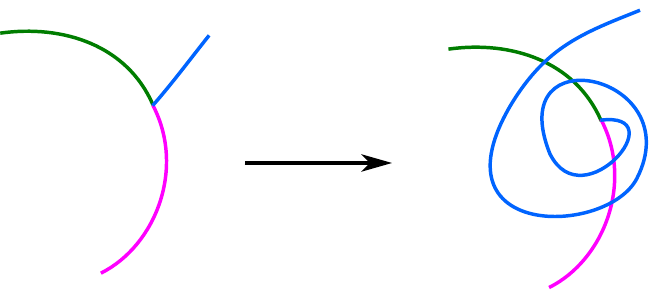}}%
    \put(0.23539808,0.06440619){\color[rgb]{0,0,0}\makebox(0,0)[lt]{\lineheight{1.25}\smash{\begin{tabular}[t]{l}\small $a$\end{tabular}}}}%
    \put(0.91248172,0.02391104){\color[rgb]{0,0,0}\makebox(0,0)[lt]{\lineheight{1.25}\smash{\begin{tabular}[t]{l}\small $a$\end{tabular}}}}%
    \put(0.98141871,0.28981118){\color[rgb]{0,0,0}\makebox(0,0)[lt]{\lineheight{1.25}\smash{\begin{tabular}[t]{l}\small $b$\end{tabular}}}}%
    \put(0.2878722,0.30044718){\color[rgb]{0,0,0}\makebox(0,0)[lt]{\lineheight{1.25}\smash{\begin{tabular}[t]{l}\small $b$\end{tabular}}}}%
    \put(0.09658434,0.40532067){\color[rgb]{0,0,0}\makebox(0,0)[lt]{\lineheight{1.25}\smash{\begin{tabular}[t]{l}\small $c$\end{tabular}}}}%
    \put(0.72543816,0.38936664){\color[rgb]{0,0,0}\makebox(0,0)[lt]{\lineheight{1.25}\smash{\begin{tabular}[t]{l}\small $c$\end{tabular}}}}%
  \end{picture}%
\endgroup%

\caption{\label{fig:twisting_b}}
\end{figure}

Now say that the intersection points are $A_1, p_1,p_2,\ldots p_{2k},A_3$, in order along $b$. Then if $p_i$ and $p_{i+1}$ are intersection points with opposite sign, we may remove a small disk around each of $p_i$ and $p_{i+1}$ and replace it with a small tube around the part of $b$ that goes from $p_i$ to $p_{i+1}$, thus reducing the number of intersection points. We may continue in this manner, removing adjacent, opposite-sign intersection points until none remain.

We now have a surface, which we call $F_1$ with boundary $a \cup c$, which intersects $b$ only at $A_1$ and $A_3$, where the intersection is clean. 

We now consider a surface $F_2 \subset S^3 \times [0,2]$ with boundary on $S$ which is given by the union of $F_1 \subset S^3 \times \set1$ with a disk sitting between $c \times 1 \subset S^3 \times [1,2]$ and $S$, as in Figure \ref{fig:S_omega_shaded}.

\begin{figure}[ht!]
\centering
\scalebox{0.9}{
\includegraphics{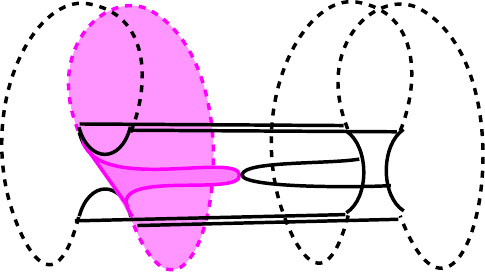}
}
\caption{\label{fig:S_omega_shaded}}
\end{figure}

Then this $F_2$ can have its corners smoothed out to a surface with boundary $\omega$.

Let us now show that for this $\omega$, we have
\[I^\sharp(\overline S)_{\overline{\omega}} \circ I^\sharp(S)_{\omega} = P \cdot \Id:I^\sharp(K_1) \to I^\sharp(K_1).\]

Let $\Sigma$ denote the composition of $S$ with $\overline{S}$, and let $\omega_{\Sigma} = \omega \cup \overline{\omega}$ be the decoration on this cobordism.
See Figure \ref{fig:nonorientable_tube_omega} for an illustration.

Let $\gamma$ denote the circle composed of the co-core of the band and its mirror, depicted in blue in Figure \ref{fig:nonorientable_tube_omega}. A regular neighbourhood of $\gamma$ in $\Sigma$ is a tube, represented in Figure \ref{fig:c2}. If we cut the surface along $\gamma$, we get the twice punctured cylinder as a cobordism from $K_1$ to itself.

\begin{figure}[ht!]
\centering
\scalebox{0.9}{
\includegraphics{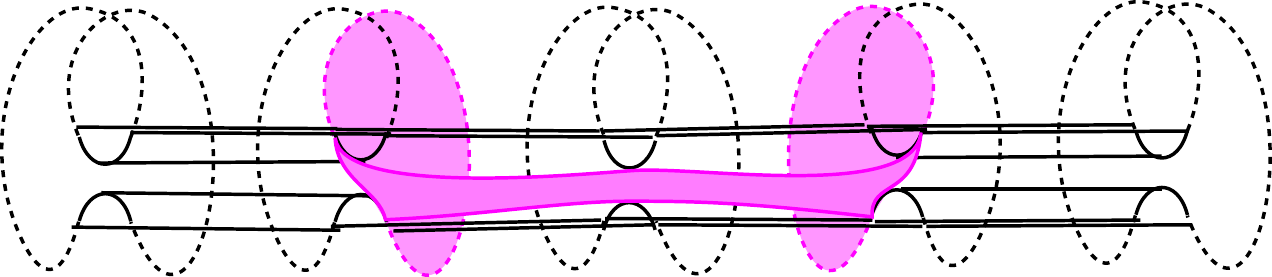}
}
\caption{\label{fig:nonorientable_tube_omega} This is $S$ with $\omega$ composed with the reverses $\overline{S}$ with $\overline{\omega}$.}
\end{figure}

Figure \ref{fig:c2} shows $\de\omega_{\Sigma}$ as well. The mod 2 homology class $[\de\omega_{\Sigma}]$ on the surface $\Sigma$ is the same as $[\gamma]$. One way to see it is to perform surgery on $\de\omega_{\Sigma}$ along the green arc in Figure \ref{fig:c2}: this operation does not change the homology class and it yields a curve which is easily checked to be isotopic to $\gamma$ in $\Sigma$.

\begin{figure}[ht!]
\centering
\includegraphics[width=\textwidth]{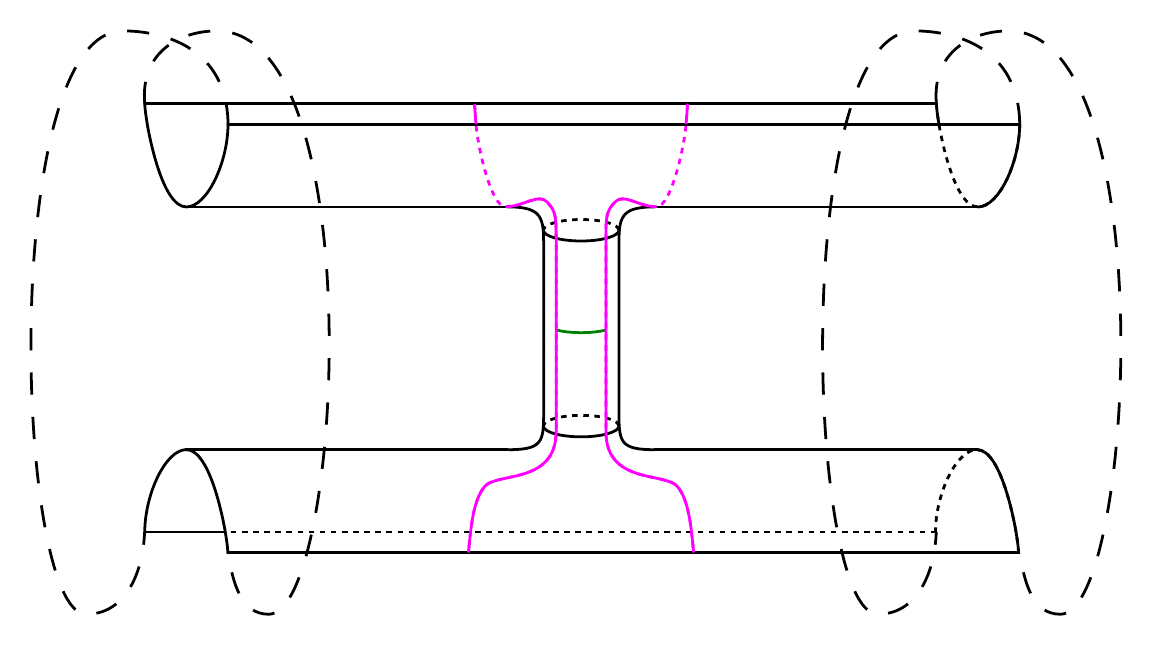}
\caption{\label{fig:c2}}
\end{figure}

Let $\Sigma'$ be the cobordism obtained from $\Sigma$ by inserting a flip in the tube in the centre of Figure \ref{fig:c2}, with the same decoration $\omega_\Sigma$.
Using Claim \ref{claim:omega_between_flip}, we will see below that $I^\sharp(\Sigma)_{\omega_{\Sigma}} = I^\sharp(\Sigma')_{\omega_{\Sigma}}$.
However, the curve $\de\omega_{\Sigma}$ is homologically trivial in $\Sigma'$. One can check it again by doing surgery on the green arc, but this time the extra flip ensured that the obtained curve is not $\gamma$, but a homotopically trivial one. Thus, $I^\sharp(\Sigma')_{\omega_{\Sigma}} = I^{\sharp}(\Sigma')_{\emptyset}$, since the map depends only on $[\de\omega_{\Sigma}]$.
If $\omega = \emptyset$ one can apply the neck cutting relation (Property \eqref{it:neckcutting} in Section \ref{sec:I_properties}) to obtain that
\[
I^\sharp(\Sigma') = P \cdot I^\sharp(I \times K_1) = P \cdot \Id_{I^\sharp(K_1)}.
\]

We still have to show that $I^\sharp(\Sigma)_{\omega_{\Sigma}} = I^\sharp(\Sigma')_{\omega_{\Sigma}}$.
To see this, isotope the tube in the middle as shown in Figure \ref{fig:c2bis}.

\begin{figure}[ht!]
\centering
\includegraphics[width=\textwidth]{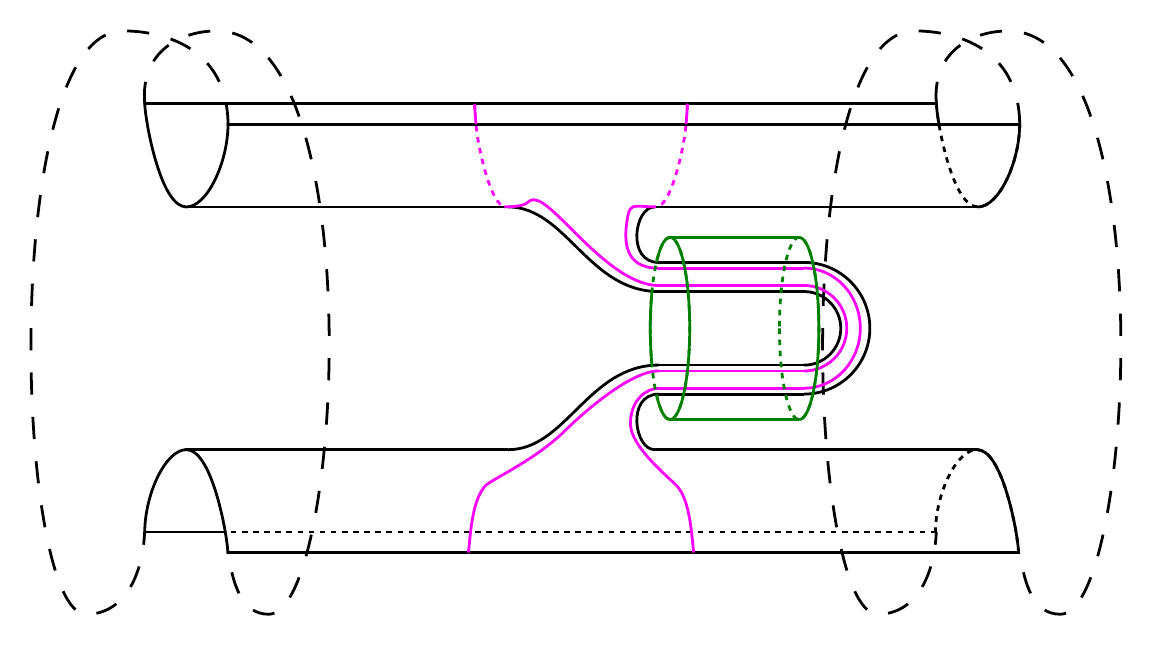}
\caption{\label{fig:c2bis}}
\end{figure}

Let's restrict our attention to the piece contained in the cylinder in green, which is the identity cobordism on a 2-component unlink.
By Claim \ref{claim:omega_between_flip} the map induced by this cobordism is the same that we get if we introduce a flip on one of the two components.
Since instanton Floer maps respect composition of cobordisms and disjoint unions, the map induced by the whole cobordism is not affected by the insertion of the flip, i.e.\ $I^\sharp(\Sigma)_{\omega_{\Sigma}} = I^\sharp(\Sigma')_{\omega_{\Sigma}}$.
\end{proof}

\begin{claim} Let $S$ be a cobordism from a knot $K_1$ to a knot $K_2$ such that $S$ consists of only $b$ bands. That is, there are no births nor deaths. Then there is a surface $\omega$ with boundary on $S$ such that
\[I^\sharp(\overline{S})_{\overline{\omega}} \circ I^\sharp(S)_\omega = P^b \cdot \Id.\]
\label{claim:bands_only}
\end{claim}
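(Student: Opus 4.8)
The plan is to realize $\overline S\circ S$ as $K_1\times I$ with $b$ tubes attached, to peel the tubes off one at a time using the neck-cutting relation, and to deal with the single non-orientable band that occurs by means of Claim~\ref{claim:nonorientable_tube_omega}.

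Consider first the case in which $S$ is orientable. Writing $S^{(k)}$ for the cobordism obtained from the first $k$ of the $b$ bands of $S$, the composite $\overline S\circ S$ is a connected oriented cobordism from $K_1$ to $K_1$ with $\chi=-2b$; concretely it is $K_1\times I$ with $b$ nested tubes attached, the $j$-th tube $T_j$ being obtained by gluing the $j$-th band of $S$ to its copy in $\overline S$. Thus $\overline S\circ S=\bigl(\overline{S^{(b-1)}}\circ S^{(b-1)}\bigr)+T_b$, where $T_b$ is attached along an interior slice. With $\omega=\emptyset$, I would apply Property~\eqref{it:neckcutting} to $T_b$: its two attaching points lie on the connected oriented surface $\overline{S^{(b-1)}}\circ S^{(b-1)}$, so sliding a dot from one attaching point to the other shows that the two decorated maps produced by the relation agree, hence cancel over $\ff_2$, giving $I^\sharp(\overline S\circ S)=P\cdot I^\sharp\bigl(\overline{S^{(b-1)}}\circ S^{(b-1)}\bigr)$. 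Iterating $b$ times yields $P^b\cdot I^\sharp(K_1\times I)=P^b\cdot\Id$.

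Now suppose $S$ is non-orientable, so that $b\ge 1$. I would first apply Lemma~\ref{lem:break} with $m=M=0$ to isotope $S$ rel boundary into the form $S=S_u\circ S_o$, where $S_o$ is an orientable cobordism from $K_1$ to a knot $L'$ built from $b-1$ bands and $S_u$ is a single band, non-orientable with respect to $L'$, from $L'$ to $K_2$; here one uses that $L'$ must be a knot, since $K_2$ is a knot and the last band preserves the number of components. Applying Claim~\ref{claim:nonorientable_tube_omega} to $S_u$, viewed as a non-orientable band on the knot $L'$ with the other components of the ambient link as spectators, produces a surface $\omega$ supported in the $S_u$-part of $S$ with $I^\sharp(\overline{S_u})_{\overline{\omega}}\circ I^\sharp(S_u)_{\omega}=P\cdot\Id_{I^\sharp(L')}$. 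Since $I^\sharp$ respects composition of cobordisms,
\[
I^\sharp(\overline{S})_{\overline{\omega}}\circ I^\sharp(S)_{\omega}=I^\sharp(\overline{S_o})\circ\bigl(I^\sharp(\overline{S_u})_{\overline{\omega}}\circ I^\sharp(S_u)_{\omega}\bigr)\circ I^\sharp(S_o)=P\cdot I^\sharp(\overline{S_o}\circ S_o),
\]
and $\overline{S_o}\circ S_o$ is a connected oriented cobordism from $K_1$ to $K_1$ built from $b-1$ bands, so the orientable case gives $I^\sharp(\overline{S_o}\circ S_o)=P^{b-1}\cdot\Id$; multiplying yields $P^b\cdot\Id$, as desired.

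The step I expect to require the most care is the orientable case, and more precisely the assertion that an orientably attached tube can always be neck-cut cleanly, i.e.\ that for a connected oriented cobordism the two maps produced by Property~\eqref{it:neckcutting} coincide: this uses both that $\overline{S^{(k)}}\circ S^{(k)}$ is connected (a consequence of $K_1$ being a knot, so that every intermediate tube is attached to a single component of the surface built so far) and that the coefficients have characteristic two. The remaining point to verify carefully is that Lemma~\ref{lem:break} really does allow one to confine all of the non-orientability to one band whose attaching knot $L'$ can then be handed to Claim~\ref{claim:nonorientable_tube_omega} without modification.
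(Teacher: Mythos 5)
Your orientable case and the overall strategy (neck-cutting for the doubled orientable tubes, Claim~\ref{claim:nonorientable_tube_omega} for the single non-orientable band, then sandwiching) are in the spirit of the paper's argument, but your reduction of the non-orientable case has a genuine gap: the assertion that $L'$ must be a knot ``since $K_2$ is a knot and the last band preserves the number of components'' is false. A band whose two attaching arcs end up on \emph{different} components of $L'$ merges them, so a $2$-component $L'$ can perfectly well be turned into the knot $K_2$ by the unorientable band; indeed Lemma~\ref{lem:break} explicitly allows $L'$ to be ``a knot or a 2-component link'' in step \eqref{it:OS}, and the knot Floer argument in Proposition~\ref{prop:HFK} has to treat the $2$-component case separately (Figure~\ref{fig:step4link}). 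This matters because Claim~\ref{claim:nonorientable_tube_omega} is stated and proved only for a non-orientable band attached to a \emph{knot}: its proof cuts that knot at $A_1,A_3$ into two arcs $a,b$ and builds a Seifert surface for $a\cup c$, which has no direct analogue when the two attaching arcs lie on different components of a link. So, as written, your argument does not cover all cobordisms $S$.

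The fix is to order the bands the other way, which is what the paper does: since all $b$ bands can be attached simultaneously to $K_1$ with disjoint arcs, non-orientability of $S$ forces some band to be non-orientable with respect to the knot $K_1$; peel \emph{that} band off the bottom, writing $S=S_r\circ S_u$ with $S_u$ the non-orientable band attached directly to $K_1$. A band non-orientable with respect to a knot produces a knot, so Claim~\ref{claim:nonorientable_tube_omega} applies verbatim to $S_u$, giving $I^\sharp(\overline{S_u})_{\overline{\omega_u}}\circ I^\sharp(S_u)_{\omega_u}=P\cdot\Id$, and one then handles the remaining $(b-1)$-band cobordism $S_r$ by induction on $b$ (it may itself still be non-orientable, which the induction absorbs; the orientable base of the induction is exactly your neck-cutting argument with $\omega=\emptyset$, using Property~\eqref{it:neckcutting} and connectedness to cancel the dotted terms in characteristic two). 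With this reordering no appeal to Lemma~\ref{lem:break} is needed and the $2$-component difficulty never arises.
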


\begin{proof} We proceed by induction on $b$. The base case $b = 0$, is obvious. 

For the inductive step, we divide into two cases. If $S$ is orientable, then the statement holds for $\omega$ empty, because the cobordism $\overline{S} \circ S$ is the same as the cylinder on $K_1$ with $b$ orientable tubes, and the result follows from the tube cutting formula.

In the case that $S$ is not orientable, at least one of the bands of $S$ must be non-orientable with respect to $K$. In this case, let us write $S = S_r \circ S_u$, where $S_u$ is a cobordism consisting of the non-orientable band, and $S_r$ is the rest of the cobordism, which may or may not be orientable. 

Then by the induction hypothesis there is some $\omega_r$ such that
\[I^\sharp(\overline{S_r})_{\overline{\omega_r}} \circ I^\sharp(S_r)_{\omega_r} = P^{b-1} \cdot \Id\]

Applying Claim \ref{claim:nonorientable_tube_omega}, there is a surface $\omega_u$ with boundary on $S_u$ such that 
\[I^\sharp(\overline{S_u})_{\overline{\omega_u}} \circ I^\sharp(S_u)_{\omega_u} = P\cdot \Id.\]

The statement \[I^\sharp(\overline{S})_{\overline{\omega}} \circ I^\sharp(S)_\omega = P^b \cdot \Id\]
now follows.

\end{proof}

Now, we can proceed with the proof of Proposition \ref{propn:main_technical}. 

\begin{proof}[Proof of Proposition \ref{propn:main_technical}]
Applying Lemma \ref{lem:break}, we may break $S$ into pieces of the form.

\begin{enumerate}
\item 
$m$ births (from $K_1$ to $L_1$);
\item 
$m$ band surgeries that joint the various components of the link (from $L_1$ to $K_1'$);
\item 
$b-(m+M)$ band surgeries which may or may not be orientable, ending in a knot $K_2'$. 
\item 
$M$ band surgeries that split the knot $K_2'$ into $M+1$ components;
\item 
$M$ deaths.
\end{enumerate}

Let us call the cobordisms corresponding to the five steps $S_1,S_2, \ldots S_5$. We may isotope the cobordism in $S^3 \times \rr$ so that $S_i$ is in the $S^3 \times [i, i+1]$.

We will choose $\omega$ to be in $S^3 \times [2,3]$, so that its boundary is in $S_3$ as in Claim \ref{claim:bands_only} so that 
\[I^\sharp(\overline{S_3})_{\overline{\omega}} \circ I^\sharp(S_3)_\omega = P^{b-m-M} \cdot \Id.\]

The proof now proceeds the same way as the proof of Proposition 4.1 of \cite{JMZ}. The main argument is by considering a cobordims $\Sigma$ that comes from adding $M$ tubes connecting points on the death-caps to their mirrors. 

By Lemma 3.2 of \cite{km_iasciok}, for a connected, oriented cobordism $\Sigma$ if $\Sigma'$ is obtained from $\Sigma$ by adding a tube between points $p$ and $q$, then
\begin{align}
I^\sharp(\Sigma') & =I^\sharp(\Sigma, p) + I^\sharp(\Sigma, q) + P I^\sharp(\Sigma) =  P I^\sharp(\Sigma),
\label{eqn:tube_cutting}
\end{align}
where the second equality is because $\Sigma$ is connected, so $I^\sharp(\Sigma, p)$ and $I^\sharp(\Sigma, q)$ induce the same map, and since we are working over characteristic two, they cancel.

Let $\Sigma_1$ denote the cobordism coming from taking $\overline{S} \circ S$ and adding $M$ tubes, one for each death connecting a point in the death to its reverse, so that $\Sigma_1 = S_1S_2S_3S_4\overline{S_4}\overline{S_3}\overline{S_2}\overline{S_1}$. Applying Equation \eqref{eqn:tube_cutting} for each death, to the part of the cobordism from $K_3$ to itself coming from doing $S_4$, $S_5$ and their reverses, we see that 
\[I^\sharp(\Sigma_1) = P^M I^\sharp(\overline{S} \circ S).\]
Here, we are allowed to use the above result because $S_4S_5\overline{S_5}\overline{S_4}$ and $S_4 \overline{S_4}$ are both orientable and connected.

In $\Sigma_1 = (S_1S_2S_3S_4) \overline{(S_1S_2S_3S_4)}$, $M$ splitting bands of $S_4$ and their reverses, cap off the ends, and call the resulting cobordism $\Sigma_2$. Then for the same reason as the above, we have
\[I^\sharp(\Sigma_1) = P^{M} I^\sharp(\Sigma_2),\]
because again, $S_3S_4\overline{S_4}\overline{S_3}$ and $S_3\overline{S_3}$ are both orientable and connected. 

Now we have $\Sigma_2 = (S_1S_2S_3) \overline{(S_1S_2S_3)}$.

Because of our construction of $\omega$, $S_3 \overline{S_3}$ with $\omega \cup \overline{\omega}$	 falls under the setting of Claim \ref{claim:bands_only}, so the map it induces is $P^{b-m-M} \cdot \Id_{I^\sharp(K_2)}$. Thus, if we let $\Sigma_3 = (S_1S_2) \overline{(S_1S_2)}$, then 
\[I^\sharp(\Sigma_2)= P^{b-m-M} I^\sharp(\Sigma_3).\]

Now $\Sigma_3$ is given by a cylinder on $K_1$ and $m$ $S^2$s, with $m$ tubes, with the tubes connecting the $S^2$s and the cylinder in a tree-like fashion. Applying the tube-cutting formula, Lemma 3.2 of \cite{km_iasciok}, and observing that a sphere without any dots induces the $0$ map and the sphere with one dot induces the identity, we see that $I^\sharp(\Sigma_3)$ induces the same map as the cylinder, which is to say, the identity.

Putting all of this together, we get
\[ P^M I^\sharp(\overline{S} \circ S) = I^\sharp(\Sigma_1) = P^{b-m} \cdot \Id,\]
as desired.	
\end{proof}
	%
%
%
%
%
%
%
%
%
%

\section{Background on unoriented knot Floer homology}
\label{sec:bgHFK}

Unoriented knot Floer homology was introduced by Oszv\'ath--Stipsicz--Szab\'o~\cite{Upsilon, tHFK}.
Fan~\cite{Fan} showed that an unorientable cobordism (with some extra data) induces maps on the unoriented knot Floer homology.
We now review the relevant definitions, following mostly \cite{ZFunctoriality} and \cite{Fan}.

\subsection{Zemke's oriented TQFT}

Cobordism maps in link Floer homology were first defined by \cite{JFunctoriality}. In this paper we use Zemke's setup~\cite{ZFunctoriality}, specified to unoriented link Floer homology in the case $Y = S^3$.

\begin{definition}
\label{def:olink}
An \emph{oriented multi-based link} in $S^3$ is a triple $\olink = (L, \w, \z)$ consisting of an oriented, embedded link $L \subset S^3$, with two disjoint collections of basepoints $\w$ and $\z$ on $L$, such that each component of $L$ has at least two basepoints and the basepoints alternate between those in $\w$ and those in $\z$ as one traverses a component of $L$.
\end{definition}

To an oriented multi-based link $\olink$, Zemke's most general construction gives a curved $\F[U_\w, V_\z]$-complex $\CFLcm(\olink)$ up to $\F[U_\w, V_\z]$-equivariant chain homotopy. Here $\F[U_\w, V_\z]$ denotes the polynomial ring generated by a $U$ variable for each $\w$ basepoint and a $V$ variable for each $\z$ basepoint. The curved complex is also endowed with gradings and a filtration.

In our case, we only need a simpler version of Zemke's complex, namely unoriented link Floer homology. This is defined as
\[
\CFLum(\olink) := \CFLcm(\olink) \otimes_{\F[U_\w, V_\z]} \F[U],
\]
where all variables act on $\F[U]$ as the multiplication by $U$.
For the reader familiar with Heegaard Floer homology, this is the free $\F[U]$-module generated by the intersection points $\T_{\ba} \cap \T_{\bb}$ in the symmetric product, with differential given by
\begin{equation}
\label{eq:CFK'differential}
\de \x = \sum_{\y \in \T_{\ba} \cap \T_{\bb}}	\sum_{\substack{\phi \in \pi_2(\x, \y)\\\mu(\phi)=1}} \#\RedModSpace(\phi) \cdot U^{n_{\o}(\phi)} \cdot \y,
\end{equation}
where $n_{\o}(\phi) = \sum_{w \in \w} n_w(\phi) + \sum_{z \in \z} n_z(\phi)$.

\begin{definition}
For a doubly-based knot $\oknot = (K, w, z)$ we also use the notations $\CFKum(\oknot)$ and $\HFKum(\oknot)$ for $\CFLum(\oknot)$ and $\HFLum(\oknot)$, respectively.
\end{definition}

If $\oknot_1 = (K, w_1, z_1)$ and $\oknot_2 = (K, w_2, z_2)$ are two doubly-based knot with the same underlying knot $K$, then $\HFKum(\oknot_1)$ and $\HFK(\oknot_2)$ are non-canonically isomorphic as $\F[U]$-modules. Thus, the following number is well-defined.

\begin{definition}
If $K$ is a knot, we define its \emph{unoriented torsion order} as
\[
\OrdU(K) = \min\set{n \geq 0 \,\middle|\, U^n \cdot \Tors = 0},
\]
where $\Tors$ is the torsion submodule of $\HFKum(\oknot)$, considered as a module over $\F[U]$. Here $\oknot$ is any doubly-based knot with underlying knot $K$.
\end{definition}

\begin{remark}
\label{rem:propertiesHFLum}
$\CFLum$ enjoys the following properties:
\begin{enumerate}
\item $\CFLum(\olink)$ is a genuine chain complex (i.e., its curvature vanishes), so one can compute its homology $\HFLum(\olink)$, known as the \emph{unoriented link Floer homology} of $\olink$. This is still an $\F[U]$-module.
\item For a doubly-based knot $\oknot = (K, w, z)$, $\HFKum(\oknot) \cong \F[U] \oplus \Tors$, where $\Tors$ is the torsion as an $\F[U]$-module.
\item For a doubly-based unknot $\ounknot = (U_1, w, z)$, $\HFKum(\ounknot) \cong \F[U]$.
\item \label{it:Kunneth}
Given doubly based knots $\oknot_1$ and $\oknot_2$,
\[
\CFKum(\oknot_1 \# \oknot_2) = \CFKum(\oknot_1) \otimes_{\F[U]} \CFKum(\oknot_2).
\]
As a consequence, for knots $K_1$ and $K_2$ in $S^3$,
\[
\OrdU(K_1 \# K_2) = \max\set{\OrdU(K_1), \OrdU(K_2)}.
\]
\item \label{it:mirror}
If $\m{\olink}$ is the mirror of $\olink$ (with same basepoints), then by \cite[Proposition 2.17]{tHFK}
\[
\CFLum(\m{\olink}) = \hom_{\F[U]}(\CFLum(\olink), \F[U]).
\]
As a consequence, for a knot $K$ in $S^3$,
\[
\OrdU(\m{K}) = \OrdU(K).
\]
\end{enumerate}
\end{remark}



\begin{definition}
If $\olink_1 = (L_1, \w_1, \z_1)$ and $\olink_2 = (L_2, \w_2, \z_2)$ are two oriented multi-based links, an (\emph{oriented}) \emph{decorated link cobordism} from $\olink_1$ to $\olink_2$ is a pair $\dlcob = (\Sigma, \dec)$ such that
\begin{enumerate}
\item $\Sigma \subset I \times S^3$ is a properly embedded, compact, oriented surface with $\Sigma \cap \{0\} \times S^3 = \{0\} \times (-L_1)$ and $\Sigma \cap \{1\} \times S^3 = \{1\} \times L_2$;
\item $\dec \subset \Sigma$ is a properly embedded 1-manifold, which we we refer to as the \emph{decorations};
\item the components of $\Sigma \sm \dec$ are partitioned into two sub-surfaces, $\Sigma_\w$ and $\Sigma_\z$, which meet along $\dec$;
\item each component of $L_i \sm \dec$ contains exactly one basepoint of $\w_i \sqcup \z_i$;
\item $\w_1 \sqcup \w_2 \subset \Sigma_\w$ and $\z_1 \sqcup \z_2 \subset \Sigma_\z$.
\end{enumerate}
\end{definition}

\begin{definition}
\label{def:idcob}
The identity (decorated link) cobordism $\id_{\olink}$ from $\olink = (L, \w, \z)$ to itself is given by the surface $\Sigma = I \times L$ with decorations $\dec = I \times Q$, where $Q \subset L \sm (\w \cup \z)$ is a finite set such that the inclusion induces an isomorphism in $\pi_0$.
\end{definition}

By the work of Zemke~\cite{ZFunctoriality}, an oriented decorated link cobordism $\dlcob$ from $\olink_1$ to $\olink_2$ induces an $\F[U]$-equivariant map
\[
\Zmap_{\dlcob} \colon \HFLum(\olink_1) \to \HFLum(\olink_2).
\]

\begin{remark}
\label{rem:propertiesZ}
The map $\Zmap$ enjoys the following properties:
\begin{enumerate}
\item $\Zmap_{\dlcob}$ is invariant under isotopy of $\Sigma$ in $I \times S^3$ while fixing the boundary, and under isotopy of $\dec$ in $\Sigma$ while keeping
\[\de\dec \subset (L_1 \sm (\w_1 \cup \z_1))\cup(L_2 \sm (\w_2 \cup \z_2)).\]
\item If $\id_{\olink}$ is the identity cobordism from $\olink$ to itself, then
\[
\Zmap_{\id_{\olink}} = \id_{\HFLum(\olink)}.
\]
\item If $\dlcob_1$ and $\dlcob_2$ are oriented decorated link cobordisms from $\olink_1$ to $\olink_2$ and from $\olink_2$ to $\olink_3$ respectively, then one can stack $\dlcob_2$ on top of $\dlcob_1$ (after isotoping the decorations so that they match on the $\olink_2$ level), and obtain a new oriented decorated link cobordism $\dlcob_2 \circ \dlcob_1$ from $\olink_1$ to $\olink_3$. In such a case,
\[
\Zmap_{\dlcob_2 \circ \dlcob_1} = \Zmap_{\dlcob_2} \circ \Zmap_{\dlcob_1}.
\]
\item \label{it:compression}
If $\dlcob' = (\Sigma', \dec')$ is obtained from $\dlcob = (\Sigma, \dec)$ by attaching a tube with both feet in $\Sigma_\z$ (or both feet in $\Sigma_\w$), then $\Zmap_{\dlcob'} = U \cdot \Zmap_{\dlcob}$.
\end{enumerate}
\end{remark}


\subsection{Fan's unoriented TQFT}

By the work of Fan~\cite{Fan}, the link Floer TQFT can be extended to the non-orientable case. We review the relevant definitions.

\begin{definition}
\label{def:dlink}
A \emph{disoriented link} in $S^3$ is a tuple $\dlink = (L, \p, \q)$ consisting of an unoriented, embedded link $L \subset S^3$, with two disjoint collections of points $\p$ and $\q$ on $L$, called the \emph{dividing set}, such that each component of $L$ has at least two points in the dividing set and the points in the dividing set alternate between those in $\p$ and those in $\q$ as one traverses a component of $L$.

Each component of $L \sm (\p \cup \q)$ is given a canonical orientation from $\q$ to $\p$.
We denote the oriented manifold $L \sm (\p \cup \q)$ by $\l$.
Note that these orientations do not glue to an orientation of $L$.
\end{definition}

As it is customary, we consider isotopic disoriented knots as different disoriented knots. It is well known that isotopies can induce non-trivial maps in knot Floer homology, such as the moving basepoint maps~\cite{Sarkar, ZQuasi}.

Definition \ref{def:dlink} looks exactly the same as Definition \ref{def:olink}, except that the link is now unoriented. However, we emphasise that the basepoints $\w \cup \z$ from \ref{def:olink} are ontologically different from the dividing set from \ref{def:dlink}. From a Morse theoretical viewpoint, the former arise as the intersection between $L$ and the middle level surface of a Morse function, whereas the latter are the index-$0$ and index-$3$ critical points of the function.

However, we can define a notion of compatibility between oriented decorated links and disoriented links.

\begin{definition}
We say that an oriented decorated link $\olink = (L, \w, \z)$ and a disoriented link $\dlink = (L, \p, \q)$ are \emph{compatible} if:
\begin{itemize}
\item the underlying unoriented link $L$ is the same (but note that in $\olink$ it also comes with an orientation);
\item $\p\cup\q$ is disjoint from $\w\cup\z$;
\item each component of $L \sm (\p\cup\q)$ contains exactly one basepoint in $\w \cup \z$;
\item the components of $L\sm(\w\cup\z)$ containing the $\p$ point are oriented from $z$ to $w$ (with orientation induced from $\olink$).
\end{itemize}
\end{definition}

\begin{remark}
\label{rem:linkcompatibility}
Every disoriented link admits a (non-canonical) compatible oriented decorated link. Likewise, every oriented decorated link admits a (non-canonical) compatible disoriented link.
\end{remark}

If two oriented decorated links $\olink_1$ and $\olink_2$ are compatible with the disoriented link $\dlink$, then $\HFLum(\olink_1)$ and $\HFLum(\olink_2)$ are canonically isomorphic.
Thus, we can define $\HFLum(\dlink)$ as $\HFLum(\olink)$, for any $\olink$ compatible with $\dlink$.
(More precisely, $\HFLum(\dlink)$ is the transitive system over all compatible oriented decorated links.)

Note that $\HFLum(\dlink)$ does not depend on the orientation chosen on $L$.
If $(L, \w, \z)$ is a compatible oriented decorated link, then the orientation reversal $L^r$ is also part of a compatible oriented decorated link, namely $(L^r, \z, \w)$. The swap of the $\w$ and $\z$ basepoints does not affect the homology, since the differential was defined to be symmetric in $\w$ and $\z$ (cf.~Equation \eqref{eq:CFK'differential}).
This justifies the name \emph{unoriented knot Floer homology} used in \cite{tHFK}.

\begin{remark}
Fan~\cite{Fan} defines other categories of unoriented links, which he calls \emph{bipartite links} and \emph{bipartite disoriented links}.
These are essential to define a TQFT framework for disoriented links, but we do not recall them here.
\end{remark}


We now revise the cobordism maps defined by Fan~\cite{Fan}.

\begin{definition}
\label{def:dcob}
A \emph{disoriented link cobordism} from $\dlink_1=(L_1, \p_1, \q_1)$ to $\dlink_2=(L_2, \p_2, \q_2)$ is a pair $\dcob = (\Sigma, \motion)$ such that
\begin{enumerate}
\item $\Sigma \subset I \times S^3$ is a properly embedded, compact surface with $\Sigma \cap \{0\} \times S^3 = \{0\} \times (-L_1)$ and $\Sigma \cap \{1\} \times S^3 = \{1\} \times L_2$;
\item $\motion \subset \Sigma$ is a properly embedded, compact, oriented 1-manifold, which we we refer to as the \emph{motion} of the critical points;
\item \label{it:dlc-orientation} the components of $\Sigma \sm \motion$ are compact, oriented surfaces with orientation induced by $\motion$;
\item $\de\motion = \q_1-\p_1+\p_2-\q_2$.
\end{enumerate}
Note that, with the orientation given in point \eqref{it:dlc-orientation}, $\de (\Sigma \sm \motion) = \l_2 -\l_1 + 2\motion$. The surface $\Sigma$ does not need to be oriented.
\end{definition}

There is a natural notion of identity cobordism, in the same spirit as Definition \ref{def:idcob}.
We do not write such a definition explicitly.

By the work of Fan~\cite{Fan}, an disoriented link cobordism $\dcob$ from $\dlink_1$ to $\dlink_2$ induces an $\F[U]$-equivariant map
\[
\Hmap_{\dcob} \colon \HFLum(\dlink_1) \to \HFLum(\dlink_2).
\]

\begin{remark}
The map $\Hmap$ enjoys the following properties:
\begin{enumerate}
\item $\Hmap_{\dcob}$ is invariant under isotopy of $\Sigma$ in $I \times S^3$ while fixing the boundary, and under isotopy of $\motion$ in $\Sigma$ while fixing the boundary.
\item If $\id_{\dlink}$ is the identity cobordism from $\dlink$ to itself, then
\[
\Hmap_{\id_{\dlink}} = \id_{\HFLum(\dlink)}.
\]
\item If $\dcob_1$ and $\dcob_2$ are oriented disoriented link cobordisms from $\dlink_1$ to $\dlink_2$ and from $\dlink_2$ to $\dlink_3$ respectively, then one can stack $\dcob_2$ on top of $\dcob_1$, and obtain a new oriented disoriented link cobordism $\dcob_2 \circ \dcob_1$ from $\dlink_1$ to $\dlink_3$. In such a case,
\[
\Hmap_{\dcob_2 \circ \dcob_1} = \Hmap_{\dcob_2} \circ \Hmap_{\dcob_1}.
\]
\end{enumerate}
\end{remark}

\subsection{Relation between Zemke's TQFT and Fan's TQFT}

\begin{definition}
For $i=1,2$, suppose that $\olink_i=(L_i, \w_i, \z_i)$ and $\bdlink_i$ are compatible.
We say that a decorated link cobordism $\dlcob = (\Sigma, \dec)$ from $\olink_1$ to $\olink_2$ and a disoriented link cobordism $\dcob = (\Sigma, \motion)$ from $\bdlink_1$ to $\bdlink_2$ are \emph{compatible} if
\begin{itemize}
\item the underlying unoriented surface $\Sigma$ is the same (but note that in $\dlcob$ it also comes with an orientation);
\item after isotoping $\dec$ without crossing $\w_1 \sqcup \z_1 \sqcup \w_2 \sqcup \z_2$, $\dec = \motion$.
\end{itemize}
\end{definition}

\begin{remark}
For $i=1,2$, suppose that $\olink_i$ and $\dlink_i$ are compatible. Moreover, suppose that $\dlcob$ is a decorated link cobordism from $\olink_1$ to $\olink_2$ and $\dcob$ is a compatible disoriented link cobordism from $\dlink_1$ to $\dlink_2$. Then
\[
\Zmap_{\dlcob} = \Hmap_{\dcob}.
\]
\end{remark}

\section{A technical result for $\HFKum$}
\label{sec:techHFK}

\subsection{The flip cobordism in $\HFKum$}

\begin{definition}
The standard disoriented unknot is $\du=(U_1, p, q)$, where $U_1 = \set{x^2+y^2=1} \cap \set{z=0}$, $p=(1,0,0)$ and $q=(-1,0,0)$.
\end{definition}

\begin{definition}
The \emph{flip cobordism} $\flip=(\Sigma_{\flip}, \motion_{\flip})$ from the standard disoriented unknot $\du = (U_1, p, q)$ to itself is the disoriented cobordism traced by the isotopy obtained by rotating $U_1$ by $\pi$ along the $x$-axis. The points $p$ and $q$ stay fixed throughout the isotopy, so we can set $\motion_{\flip} = I \times \set{p, q}$.
\end{definition}

Note that the surface underlying a flip cobordism is orientable, although no orientation of the surface restricts to the same orientation on the two standard disoriented unknots on the boundary.

\begin{figure}
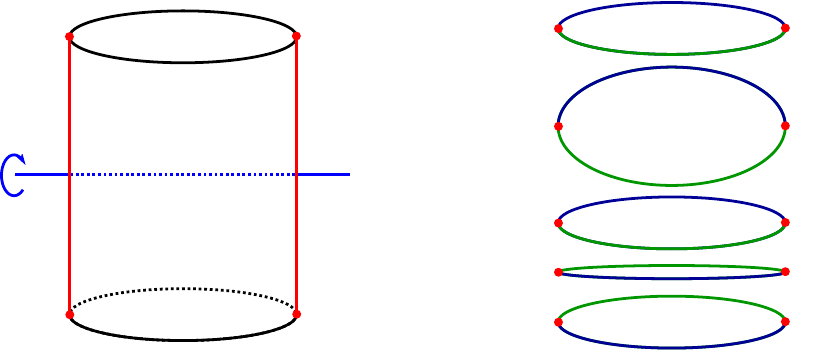
\caption{The figure on the left is our notation for the flip cobordism. On the right we sketched a few sections of the cobordism. We used different colours for the two components of $U_1 \setminus\set{p,q}$ to help the visualisation.}
\end{figure}

\begin{lemma}
\label{lem:flip}
The map $\Hmap_{\flip}$ induced by the flip cobordism is the identity map on $\HFKum(\du) \cong \F[U]$.
\end{lemma}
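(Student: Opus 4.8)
The plan is to exploit that $\HFKum(\du)$ is a free $\F[U]$-module of rank one, that $\Hmap_{\flip}$ is $\F[U]$-equivariant, and that the flip cobordism is invertible. By Remark \ref{rem:propertiesHFLum}, $\HFKum(\du)\cong\F[U]$, and by Fan's construction $\Hmap_{\flip}$ is an $\F[U]$-equivariant endomorphism of it, hence multiplication by some polynomial $f(U)\in\F[U]$. Since we work over $\F=\F_2$, the only unit of $\F[U]$ is $1$, so it suffices to show that $\Hmap_{\flip}$ is an isomorphism.

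To see invertibility, I would show $\overline{\flip}\circ\flip\simeq\id_{\du}$ (and symmetrically $\flip\circ\overline{\flip}\simeq\id_{\du}$) as disoriented cobordisms, where $\overline{\flip}$ denotes $\flip$ with its $I$-coordinate reversed. Recall that $\flip$ is the trace of the ambient isotopy of $S^3$ rotating about the $x$-axis through angles from $0$ to $\pi$, with $p$ and $q$ fixed throughout; thus $\overline{\flip}$ traces the same family of rotations run backwards, and $\overline{\flip}\circ\flip$ is the trace of the loop of rotations $0\to\pi\to0$ in $SO(2)$, which is null-homotopic rel endpoints. A null-homotopy of this loop gives a level-preserving ambient isotopy of $I\times S^3$ that is the identity on both boundary copies of $S^3$, fixes the decorations $I\times\set{p,q}$ pointwise (as $p,q$ lie on the axis), preserves all orientations, and carries $\overline{\flip}\circ\flip$ to the product cobordism $\id_{\du}$. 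By the isotopy invariance and functoriality of $\Hmap$ recalled above, this yields $\Hmap_{\overline{\flip}}\circ\Hmap_{\flip}=\id$, so $f$ is a unit of $\F[U]$ and therefore $f=1$, i.e.\ $\Hmap_{\flip}=\id$.

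The step I expect to be the main obstacle is verifying that the null-homotopy of the round-trip path in $SO(2)$ really does produce an isotopy of \emph{disoriented} cobordisms rel boundary -- that is, that the ambient isotopy respects not just the underlying surface but also the oriented motion $\motion$ and the induced orientations on the components of $\Sigma\setminus\motion$; this works precisely because the rotations involved are orientation-preserving on $S^3$ and fix $p$ and $q$. (As an alternative to invoking invertibility one could argue via gradings: the flip cobordism has $\chi=0$ and the standard decoration pattern, so $\Hmap_{\flip}$ preserves the Maslov grading and thus $f\in\F_2$; but excluding $f=0$ then still requires either the invertibility above or an explicit computation capping $\flip$ off with a disk, so the route above seems the most self-contained.)
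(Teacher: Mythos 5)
Your argument is correct, and it reaches the conclusion by a slightly different route than the paper. Both proofs share the same algebraic endgame: since $\HFKum(\du)\cong\F[U]$ and $\Hmap_{\flip}$ is $\F[U]$-equivariant, the map is multiplication by some $f(U)$, and one only needs to force $f$ to be a unit (equivalently, to satisfy a monic relation), because the units of $\F_2[U]$ are exactly $\set{1}$. Where you differ is in the topological input. The paper never introduces the time-reversed cobordism: it composes $\flip$ with itself four times, observes that $\flip^4$ is traced by a $4\pi$ rotation, and uses $\pi_1(SO(3))=\Z/2\Z$ to conclude $\flip^4$ is isotopic rel boundary to the identity cobordism, giving $f(U)^4=1$ and hence $f=1$. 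You instead compose $\flip$ with its $I$-reversal $\overline{\flip}$ and use the elementary fact that a path followed by its reverse is null-homotopic rel endpoints (inside the circle of rotations about the $x$-axis, all of which fix $p$ and $q$), giving $g(U)f(U)=1$ and hence $f=1$. Your route avoids any appeal to $\pi_1(SO(3))$ and is in that sense more elementary, but it requires the extra (routine, and correctly flagged by you) bookkeeping that $\overline{\flip}$ is a legitimate disoriented cobordism from $\du$ to itself---with the motion reoriented so that $\de\motion=\q_1-\p_1+\p_2-\q_2$ holds---and that the level-preserving ambient isotopy coming from the null-homotopy fixes the motion $I\times\set{p,q}$ pointwise, so that the composite is identified with the identity disoriented cobordism and not just with the underlying annulus. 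The paper's choice of $\flip^4$ sidesteps reversed cobordisms entirely at the cost of the (standard) $SO(3)$ fact; note that $\flip^2$ would not suffice for either argument, since a $2\pi$ rotation generates $\pi_1(SO(3))$. Your closing remark about the grading alternative is also accurate: it only pins $f$ down to a constant, so some version of the invertibility (or a capping computation) is still needed to rule out $f=0$.
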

\begin{proof}
The fourth iteration $\flip^4$ is the disoriented cobordism traced by a $4\pi$ rotation about the $x$-axis. Since $\pi_1(SO(3)) = \Z/2\Z$, the rotation by $4\pi$ is isotopic to the identity. Thus, $\flip^4$ is isotopic to the identity cobordism, and
\begin{equation}
\label{eq:fourthpower}
\left(\Hmap_{\flip}\right)^4 = \Hmap_{\flip^4} = \id_{\F[U]}.
\end{equation}
The map
\[
\Hmap_{\flip} \colon \F[U] \to \F[U]
\]
is $U$-equivariant, so it is completely determined by the image of $1$. If we set $p(U) := \Hmap_{\flip}(1) \in \F[U]$, Equation \eqref{eq:fourthpower} implies that $\left(p(U)\right)^4 = 1$. Since every invertible element of $\F[U]$ must be in $\F$, we deduce $p(U) = 1$.
\end{proof}

\subsection{A stabilisation lemma}
In this subsection only, we will need to work in a more general setting than the one outlined in Section \ref{sec:bgHFK}.

First, we will consider decorated links $\olink$ in a 3-manifold $Y$, and decorated link cobordisms $(\Sigma, \mc A)$ in a 4-manifold $W$.
In Section \ref{sec:bgHFK}, we have stated the definitions of decorated link and decorated link cobordism only when $Y = S^3$ and $W = I \times S^3$. The general definitions are only needed in this subsection, and they can be found in \cite[Definitions 2.1 and 2.4]{ZFunctoriality}.
Again in this subsection only, we will consider the full chain complex $\CFLcm(Y, \oknot)$ associated to a decorated knot, which is a complex over $\F[U, V]$, up to chain homotopy equivalence.

We will also use the homological action on link Floer homology. (See \cite[Section 4.2.5]{HF} for the original definition of Heegaard Floer homology, \cite[Theorem 3.1]{OSzTriangles} for the cobordism action, and \cite[Section 12.2]{ZGradings} for its extension to link Floer homology.)
Given a decorated link $\olink$ in a 3-manifold $Y$ there is a homological action
\[
A \colon \Lambda^*(H_1(Y;\Z)/\mathrm{Tors}) \otimes \CFLcm(Y, \olink) \to \CFLcm(Y, \olink),
\]
and given a decorated link cobordism $(\Sigma, \mc A)$ in a 4-manifold $W$ there is a version of the cobordism map incorporating the homological action
\[
F^H_{W, \Sigma, \mc A} \colon \Lambda^*(H_1(W;\Z)/\mathrm{Tors}) \otimes \CFLcm(Y_1, \olink_1) \to \CFLcm(Y_2, \olink_2).
\]
(We use the notations $F^H$ to distinguish it from the cobordism map $\Zmap$ which does not incorporate the homological action.)
If the 4-manifold $W$ is obtained by adding 1-handles to $B^4$, then the map $F^H$ can be recovered from $\Zmap$ by post-composing with the homological action on $Y_1$.
The following lemma, which is needed to establish Proposition \ref{prop:HFK}, was proved by Ian Zemke. A related argument appeared in \cite[Section 5]{JZClasp} (see in particular \cite[Lemma 5.3]{JZClasp}).

\begin{lemma}
\label{lem:Ian}
Let $\Sigma = I \times K \subset I \times S^3$ be the identity cobordism from the knot $K$ to itself, and let $\Sigma'$ denote a surface obtained by adding a compressible 1-handle to $\Sigma$.
If $\gamma \subset \Sigma$ denotes an embedded arc joining the feet of the 1-handle, define decorations $\mathcal A'$ on $\Sigma'$ as two parallel embedded arcs from $\set0 \times K$ to $\set1 \times K$ such that:
\begin{itemize}
\item $\mathcal A'$ does not intersect $\gamma$;
\item each arc of $\mathcal A'$ crosses the co-core of the 1-handle exactly once;
\item the arcs of $\mathcal A'$ join the points $(0, p)$ and $(0,q)$ to the points $(1,p)$ and $(1,q)$ in $I \times K$ respectively;
\item the decorations $\mathcal A$, obtained by restricting $\mathcal A'$ to $\Sigma$ and by reconnecting each pair of arcs with an arc parallel to $\gamma$, is isotopic rel boundary to a product decoration $I \times \set{p, q}$.
\end{itemize}
Then, if $\oknot = (K, w, z)$ for some points $w, z$ alternated to $p, q$, the cobordism map
\[
\Zmap_{\Sigma', \mathcal A'} \colon \HFKum(\oknot) \to \HFKum(\oknot)
\]
coincides with the map $U \cdot \id_{\HFKum(\oknot)}$.
\end{lemma}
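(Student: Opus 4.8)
The plan is to realise $\Zmap_{\Sigma', \mathcal A'}$ as a composition of elementary cobordism maps in Zemke's TQFT and to reduce the statement to the compression relation of Remark~\ref{rem:propertiesZ}~\eqref{it:compression}, at the cost of a correction term which turns out to be a homological action and therefore vanishes over $S^3$. It is important to note that the compression relation does \emph{not} apply directly: since each of the two arcs of $\mathcal A'$ meets the co-core of the $1$-handle exactly once, $\mathcal A'$ cuts the tube into one rectangle lying in $\Sigma'_{\w}$ and one lying in $\Sigma'_{\z}$, so the feet of the tube do not both lie in a single component of $\Sigma' \setminus \mathcal A'$. If instead one pushes both arcs off the tube, one obtains a different decoration $\mathcal A'_0$ (not isotopic to $\mathcal A'$ on $\Sigma'$, since the two differ by the free generator of $\pi_1$ created by the handle); for $\mathcal A'_0$ the tube is monochromatic, and the compression relation, combined with the hypothesis that the reconnected decoration $\mathcal A$ is isotopic rel boundary to the product decoration $I \times \set{p,q}$, gives $\Zmap_{\Sigma', \mathcal A'_0} = U \cdot \id_{\HFKum(\oknot)}$. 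Hence the content of the lemma is that replacing $\mathcal A'_0$ by $\mathcal A'$ does not change the induced map.

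To prove this I would follow \cite[Section~5]{JZClasp}, in particular \cite[Lemma~5.3]{JZClasp}: realise the tube as a cancelling pair of a $4$-dimensional $1$-handle and $3$-handle attached to $I \times S^3$ (equivalently, decompose the tube as a split saddle $\oknot \to \oknot \sqcup \ounknot$ followed by a merge saddle $\oknot \sqcup \ounknot \to \oknot$), so that $\Zmap_{\Sigma', \mathcal A'}$ factors as a composition of the handle (respectively saddle) maps together with the basepoint-moving and isotopy maps forced by the routing of $\mathcal A'$ over the tube. Using Zemke's formulas for these elementary pieces and the Künneth formula of Remark~\ref{rem:propertiesHFLum}~\eqref{it:Kunneth} (which identifies $\HFLum(\oknot \sqcup \ounknot)$ with $\HFKum(\oknot)$ as an $\F[U]$-module), the computation should express $\Zmap_{\Sigma', \mathcal A'}$ as $U \cdot \id_{\HFKum(\oknot)}$ plus a term given by the homological action along the class $\xi \in H_1(S^3;\Z)$ carried by the co-core circle of the tube. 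Since $H_1(S^3;\Z) = 0$, this homological action is identically zero, so the correction vanishes and $\Zmap_{\Sigma', \mathcal A'} = U \cdot \id_{\HFKum(\oknot)}$, as claimed.

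The main obstacle is this middle step. One has to keep precise track of the decoration $\mathcal A'$ throughout the decomposition — in particular of which side of each rectangle of the tube is coloured $\w$ and which is coloured $\z$ — and then identify the discrepancy from $U \cdot \id$ with a homological action term rather than with something less tractable. This is exactly where the hypotheses on $\mathcal A'$ enter (each of its two arcs crossing the co-core once, and the reconnected decoration $\mathcal A$ on $\Sigma$ being isotopic rel boundary to a product decoration), and it is the part of the argument that runs parallel to \cite[Lemma~5.3]{JZClasp}; the remainder is bookkeeping with the functoriality properties of $\Zmap$ collected in Remark~\ref{rem:propertiesZ} and with the compression relation.
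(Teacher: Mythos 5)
Your reduction has the right first half: you correctly observe that the compression relation of Remark \ref{rem:propertiesZ}.\eqref{it:compression} does not apply to $\mathcal A'$ itself, and that for the pushed-off decoration $\mathcal A'_0$ (arcs avoiding the tube) compression plus the hypothesis on $\mathcal A$ gives $U \cdot \id_{\HFKum(\oknot)}$; this matches one term of the paper's argument. The genuine gap is the middle step, which you yourself flag but leave as an assertion: you claim that ``the computation should express'' $\Zmap_{\Sigma', \mathcal A'}$ as $U\cdot\id$ plus a homological action term, and that this term is the action of the class of the co-core circle in $H_1(S^3;\Z)$. First, nothing in your plan actually isolates a correction term: $\mathcal A'$ and $\mathcal A'_0$ are not isotopic decorations, and the saddle-decomposition-plus-K\"unneth computation that is supposed to produce the splitting is not carried out, nor is it clear it would yield anything of homological-action type without further input. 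The paper isolates the correction by a different device, Zemke's bypass relation along an arc $\delta$ crossing the tube, which gives $\Zmap_{\Sigma',\mathcal A'} = \Zmap_{\Sigma',\mc A''} + \Zmap_{\Sigma',\mc A'''}$ with $\mc A''$ playing the role of your $\mathcal A'_0$; the whole content is then to prove $\Zmap_{\Sigma',\mc A'''}=0$.

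Second, your identification of the would-be correction with the action of a class in $H_1(S^3)$ is not the right mechanism, and taken literally it is vacuous: the co-core of the tube bounds the compressing disc, so it is nullhomologous on the nose, and declaring the correction to be ``the action of a class in $H_1(S^3)=0$'' presupposes exactly what has to be proved. In the paper the vanishing is four-dimensional and requires real work at the chain level: one splits $(I\times S^3,\Sigma',\mc A''')$ so that the tube sits inside a piece diffeomorphic to $S^1\times D^3$, whose surface is a punctured torus with longitude generating $H_1(S^1\times D^3)\cong\Z$; a grading argument in $\CFLcm(S^1\times S^2,\mathbb U)$ forces the map of that piece to hit $k\cdot \x_-$, the explicit computation $A(\zeta\otimes\x_+)=\x_-$ rewrites $\x_-$ as the image of the generator $\zeta$ under the $H_1$-refined cobordism map, and then functoriality of $F^H$ kills the composite because $\iota_*(\zeta)=0$ in $H_1(I\times S^3)=0$. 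So the relevant class is the core loop of the tube (together with $\gamma$) in the first homology of a sub-4-manifold, dying under inclusion into $I\times S^3$ --- not the co-core circle in an $S^3$ slice. Without the bypass relation (or some equivalent way to split off the discrepancy) and without the grading/$S^1\times S^2$ computation pinning that discrepancy down as a homological-action term, the proposed proof does not go through as written.
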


\begin{proof}
If $a_1$ and $a_2$ denote the two components of $\mc A'$, let $\delta$ be an arc on $\Sigma'$ which starts from $a_1$ near a foot of the 1-handle, then traverses $a_2$, follows $\gamma$ to the other foot of the 1-handle, and ends on $a_2$. See Figure \ref{fig:bypass} on the left for an illustration.

\begin{figure}
\begingroup%
  \makeatletter%
  \providecommand\color[2][]{%
    \errmessage{(Inkscape) Color is used for the text in Inkscape, but the package 'color.sty' is not loaded}%
    \renewcommand\color[2][]{}%
  }%
  \providecommand\transparent[1]{%
    \errmessage{(Inkscape) Transparency is used (non-zero) for the text in Inkscape, but the package 'transparent.sty' is not loaded}%
    \renewcommand\transparent[1]{}%
  }%
  \providecommand\rotatebox[2]{#2}%
  \newcommand*\fsize{\dimexpr\f@size pt\relax}%
  \newcommand*\lineheight[1]{\fontsize{\fsize}{#1\fsize}\selectfont}%
  \ifx\svgwidth\undefined%
    \setlength{\unitlength}{293.16547239bp}%
    \ifx\svgscale\undefined%
      \relax%
    \else%
      \setlength{\unitlength}{\unitlength * \real{\svgscale}}%
    \fi%
  \else%
    \setlength{\unitlength}{\svgwidth}%
  \fi%
  \global\let\svgwidth\undefined%
  \global\let\svgscale\undefined%
  \makeatother%
  \begin{picture}(1,0.38462144)%
    \lineheight{1}%
    \setlength\tabcolsep{0pt}%
    \put(0,0){\includegraphics[width=\unitlength,page=1]{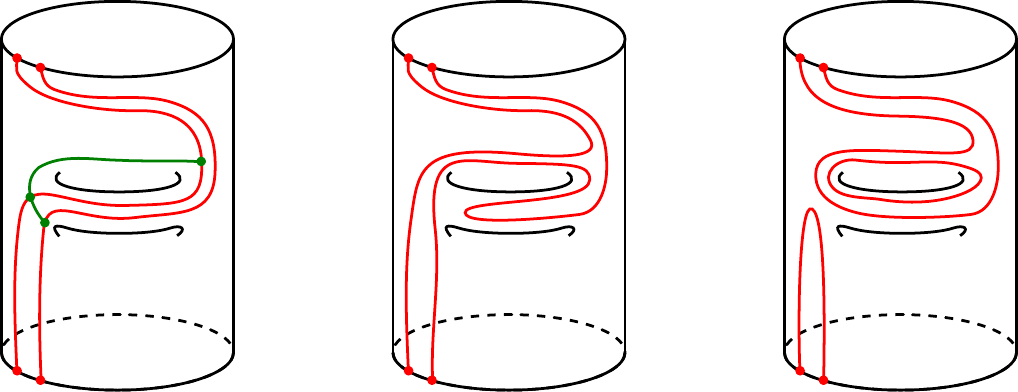}}%
    \put(0.04812867,0.10115897){\color[rgb]{0,0,0}\makebox(0,0)[lt]{\lineheight{1.25}\smash{\begin{tabular}[t]{l}\small \color{red}$\mathcal{A}'$\end{tabular}}}}%
    \put(0.08264525,0.23778675){\color[rgb]{0,0,0}\makebox(0,0)[lt]{\lineheight{1.25}\smash{\begin{tabular}[t]{l}\small $\delta$\end{tabular}}}}%
    \put(0.43274101,0.10116277){\color[rgb]{0,0,0}\makebox(0,0)[lt]{\lineheight{1.25}\smash{\begin{tabular}[t]{l}\small \color{red}$\mathcal{A}''$\end{tabular}}}}%
    \put(0.81741374,0.10116276){\color[rgb]{0,0,0}\makebox(0,0)[lt]{\lineheight{1.25}\smash{\begin{tabular}[t]{l}\small \color{red}$\mathcal{A}'''$\end{tabular}}}}%
  \end{picture}%
\endgroup%

\caption{The three figures show the three decorations $\mc A'$, $\mc A''$, and $\mc A'''$ on the surface $\Sigma'$ appearing in the bypass relation arising from the arc $\delta$.}
\label{fig:bypass}
\end{figure}

We apply Zemke's bypass relation on a disc $\Delta \subset \Sigma'$ obtained as a regular neighbourhood of the arc $\delta$ in $\Sigma'$. If $\mc A''$ and $\mc A'''$ denote the other decorations appearing in the bypass relation as in Figure \ref{fig:bypass}, we have that
\[
\Zmap_{\Sigma', \mathcal A'} = \Zmap_{\Sigma', \mc A''} + \Zmap_{\Sigma', \mc A'''}.
\]

The decorations $\mc A''$ can be isotoped away from the co-core of the 1-handle. After compressing the 1-handle the surface becomes isotopic to $\Sigma$ and the decorations become isotopic to the product decorations $\mc A$. Thus, by Remark \ref{rem:propertiesZ}.\eqref{it:compression},
\[
\Zmap_{\Sigma', \mc A''} = U \cdot \Zmap_{\Sigma, \mc A} = U \cdot \id_{\HFKum(\oknot)}.
\]

Thus, we only need to show that $\Zmap_{\Sigma', \mc A'''}=0$. From this point until the end of the proof we will work on the chain level $\CFLcm(\oknot)$, considered as an $\F[U, V]$-complex, up to chain homotopy equivalence. (The variable $U$ is associated to the basepoint $w$.) 

We split the cobordism $(I \times S^3, \Sigma', \mc A''')$ as the composition of two cobordisms.
The first one, which we call $\mc W_1 = (W_1, \Sigma_1, \mc A_1)$, is obtained by taking as $W_1$ the (disjoint) union of a regular neighbourhood of $\set0 \times S^3$ and a neighbourhood of $\gamma \cup c$ (where $c$ denotes the core of the 1-handle) containing the 1-handle entirely. Note that the latter component of $W_1$ is diffeomorphic to $S^1 \times D^3$. The decorated surface $(\Sigma_1, \mc A_1)$ is obtained by intersecting $W_1$ with $(\Sigma', \mc A''')$.
The second cobordism $\mc W_2$ is obtained by taking the closure of the complement of $W_1$ in $I \times S^3$. Thus, we have
\[
\Zmap_{\Sigma, \mc A'''} =
F^H_{(I \times S^3, \Sigma', \mc A''')} = F^H_{\mc W_2} \circ F^H_{\mc W_1}.
\]
(In the first equality we used the fact that $H_1(I \times S^3) = 0$.)

We focus on the map $F^H_{\mc W_1}$. Since $W_1$ has two connected components (one of which is an identity cobordism over $\oknot$), the map splits as a tensor product
\begin{equation}
\label{eq:v2,1}
F^H_{\mc W_1} = \id_{\CFLcm(\oknot)} \otimes F^H_{\widetilde{\mc W_1}},
\end{equation}
where $\widetilde{\mc W_1} = (S^1 \times D^3, \widetilde{\Sigma_1}, \widetilde{\mc A_1})$ is a cobordism from the empty link in the empty 3-manifold to a doubly pointed unknot $\mathbb U$ in $S^1 \times S^2$, illustrated in Figure \ref{fig:Ian} on the left.

\begin{figure}
\includegraphics{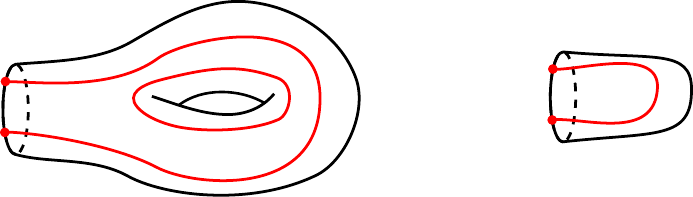}
\caption{The figure on the left shows the decorated surface $(\widetilde{\Sigma_1}, \widetilde{\mc A_1})$. The punctured torus $\widetilde{\Sigma_1}$ sits in $S^1 \times D^3$ in such a way that its longitude generates $H_1(S^1 \times D^3)$, while its meridian is null-homotopic. The figure on the right shows the decorated surface $(\overline{\Sigma_1}, \overline{\mc A_1})$.}
\label{fig:Ian}
\end{figure}

The knot Floer complex $\CFLcm(S^1 \times S^2, \mathbb U)$ is generated over $\F[U, V]$ by two homogeneous elements $\x_+$ and $\x_-$. Their $\gr_\w$ and $\gr_\z$ gradings (as defined in \cite{ZGradings}) are given by the formulas
\[
(\gr_\w, \gr_\z)(\x_\pm) = \left(\pm\frac12, \pm\frac12\right).
\]
By grading reasons \cite{ZGradings}, we have that
\begin{equation}
\label{eq:v2,2}
F^H_{\widetilde{\mc W_1}}(1) = k \cdot \x_-
\end{equation}
for some $k \in \Z/2\Z$. An explicit computation of the action of $H_1(S^1 \times S^2) = \Z\langle{\zeta}\rangle$ shows that $A(\zeta \otimes \x_+) = \x_-$. From this fact, a direct computation shows that
\begin{equation}
\label{eq:v2,3}
F^H_{\overline{\mc W_1}}(\zeta \otimes 1) = \x_-,
\end{equation}
where $\overline{\mc W_1} = (S^1 \times S^2, \overline{\Sigma_1}, \overline{\mc A_1})$ is the cobordism shown in Figure \ref{fig:Ian} on the right.

Recall that the cobordism $\mc W_1$ is the disjoint union of an identity cobordism over $\oknot$ and the cobordism $\widetilde{\mc W_1}$. If $\widehat{\mc W_1}$ denotes the cobordism obtained by replacing the $\widetilde{\mc W_1}$ component with $\overline{\mc W_1}$, then by combining Equations \eqref{eq:v2,1}, \eqref{eq:v2,2}, and \eqref{eq:v2,3} we have
\begin{equation}
\label{eq:v2,4}
F^H_{\mc W_1}(\x) = 
\x \otimes F^H_{\widetilde{\mc W_1}}(1) = 
k \cdot \x \otimes F^H_{\overline{\mc W_1}}(\zeta \otimes 1) =
k \cdot F^H_{\widehat{\mc W_1}}(\zeta \otimes \x).
\end{equation}

Finally, let $\widehat{\mc W}$ denote the composition of $\widehat{\mc W_1}$ and $\mc W_2$. Note that the 4-manifold underlying $\widehat{\mc W}$ is still $I \times S^3$ (same as $\mc W$), since the replacement of ${\mc W_1}$ with $\widehat{\mc W_1}$ did not affect the underlying 4-manifold.
Then, by Equation \eqref{eq:v2,4},
\begin{align*}
\Zmap_{\Sigma, \mc A'''} (\x) &=
F^H_{\mc W_2} \circ F^H_{\mc W_1} (\x)\\
&= k \cdot F^H_{\mc W_2} \circ F^H_{\widehat{\mc W_1}}(\zeta \otimes \x)\\
&= k \cdot F^H_{\widehat{\mc W}}(\iota_*(\zeta) \otimes \x) = 0.
\end{align*}
The last term vanishes because the map $\iota_* \colon H_1(S^1 \times D^3) \to H_1(I \times S^3) = 0$ induced by the inclusion of $\widehat{\mc W_1}$ into $\widehat{\mc W}$ must map $\zeta$ to $0$.
\end{proof}

\subsection{The main theorem in $\HFKum$}

\begin{proposition}
\label{prop:HFK}
Suppose that $\Sigma$ is a connected unorientable knot cobordism from $K_1$ to $K_2$ in $I \times S^3$ with $m$ local minima, $b$ saddles, and $M$ local maxima, and let $\m{\Sigma}$ denote the mirrored upside down cobordism from $K_2$ to $K_1$.

Then there are choices of motions of the critical points such that the disoriented knot cobordisms $\dcob=(\Sigma, \motion_1)$ and $\m\dcob=(\m\Sigma, \motion_2)$ can be composed to $\m\dcob \circ \dcob$, and
\begin{equation}
\label{eq:HFK}
U^M \cdot \Hmap_{\m\dcob} \circ \Hmap_{\dcob} = U^{b-m} \cdot \id_{\HFKum(K_1)}.
\end{equation}
\end{proposition}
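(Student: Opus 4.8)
\emph{Proof plan.} The overall strategy mirrors that of \cite[Proposition 4.1]{JMZ}, and the argument runs formally parallel to the proof of Proposition \ref{prop:I}, with the r\^ole of the instanton tube-cutting relation now played by the compression relation of Remark \ref{rem:propertiesZ}.\eqref{it:compression}: attaching a tube with both feet in the same subsurface ($\Sigma_\w$ or $\Sigma_\z$) multiplies the induced map by $U$. The genuinely new ingredient, forced by the fact that the obvious product decorations do not extend across a non-orientable saddle, is the treatment of the single non-orientable band, where Lemmas \ref{lem:flip} and \ref{lem:Ian} come in; this is where I expect the main difficulty to lie.

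First I would use Lemma \ref{lem:break} to put $\Sigma$ in the normal form $\Sigma = S_5\circ S_4\circ S_3\circ S_2\circ S_1$, where $S_1$ consists of the $m$ births, $S_2$ of the $m$ bands joining the link into a knot $K_1'$, $S_3$ of the $b-m-M$ middle bands (exactly one of which is non-orientable, with both attaching arcs on $K_1$), $S_4$ of the $M$ splitting bands, and $S_5$ of the $M$ deaths; arrange $S_i\subset S^3\times[i,i+1]$. Fix a motion $\motion_1$ on $\Sigma$ that is a product motion on $S_1,S_2,S_4,S_5$ and a convenient motion on $S_3$ (to be specified below), and set $\motion_2 = \m{\motion_1}$, so that $\m{\dcob}\circ\dcob$ is defined. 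Every further choice is legitimate, since the motions are ours to pick.

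Next, exactly as in the proof of Proposition \ref{prop:I}, form $\Sigma_1$ from $\m{\Sigma}\circ\Sigma$ by attaching, for each death, a tube joining the corresponding death-disc of $S_5$ to its mirror in $\m{S_5}$, with every tube foot lying in a single subsurface. By Remark \ref{rem:propertiesZ}.\eqref{it:compression}, on the one hand $\Sigma_1$ induces $U^M\cdot(\Hmap_{\m{\dcob}}\circ\Hmap_{\dcob})$; on the other hand $\Sigma_1$ is isotopic rel boundary to $(S_1S_2S_3S_4)\circ\m{(S_1S_2S_3S_4)}$, whose middle part $S_4\circ\m{S_4}$ is the cylinder on $K_2'$ with $M$ tubes attached (again with feet in single subsurfaces), so $\Sigma_1$ also induces $U^M$ times the map of $\Sigma_2 := (S_1S_2S_3)\circ\m{(S_1S_2S_3)}$. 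Granting the key claim that $\m{S_3}\circ S_3$, with the induced motion, induces $U^{b-m-M}\cdot\id$, the map of $\Sigma_2$ is $U^{b-m-M}$ times the map of $\Sigma_3 := (S_1S_2)\circ\m{(S_1S_2)}$; and $\Sigma_3$ is the cylinder on $K_1$ with $m$ two-spheres attached by tubes in a tree-like pattern, which induces $\id$ (a two-sphere carrying a single dividing circle is neutral and the tubes are compressible), exactly as in the proof of Proposition \ref{prop:I}. Chaining the equalities yields $U^M\cdot(\Hmap_{\m{\dcob}}\circ\Hmap_{\dcob}) = U^{b-m}\cdot\id$, which is \eqref{eq:HFK}.

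It remains to prove the key claim, and here I would induct on the number of bands of $S_3$, as in Claim \ref{claim:bands_only}. An orientable band $S_o$ may be peeled off: $\m{S_o}\circ S_o$, with its natural motion, is a cylinder with one tube attached whose feet lie in a single subsurface, hence contributes a factor $U$ by Remark \ref{rem:propertiesZ}.\eqref{it:compression}, and the inductive hypothesis applied to the remaining bands (which still contain exactly one non-orientable band) finishes this case. The base of the induction is $S_3 = S_u$, a single non-orientable band, for which one must show that $\m{S_u}\circ S_u$, with suitable motions, induces $U\cdot\id_{\HFKum(K_1)}$ --- the knot Floer counterpart of Claim \ref{claim:nonorientable_tube_omega}, and the step I expect to be the main obstacle. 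The plan is to observe that $\m{S_u}\circ S_u$ is isotopic rel boundary to $I\times K_1$ with a $1$-handle attached along the band region, and then to run, in the disoriented setting, the geometric argument of Claim \ref{claim:nonorientable_tube_omega}: choose a surface and an arc playing the r\^oles of $F_1$ and the co-core there, and insert a flip on a split unknot appearing in the middle of the composed cobordism, which by Lemma \ref{lem:flip} leaves the induced map unchanged (the analogue of Claim \ref{claim:omega_between_flip}). The point of the flip is that the non-orientability of $S_u$ prevents the product decoration from closing up --- the very phenomenon that forced the introduction of $\omega$ in the instanton argument --- so inserting it normalizes the decoration and brings the pair ($1$-handle, induced motion) into the form demanded by Lemma \ref{lem:Ian}: a compressible $1$-handle decorated by two parallel arcs, each meeting the co-core once, that reconnect to a product decoration. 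Lemma \ref{lem:Ian} then gives the value $U\cdot\id$, completing the induction and hence the proof. Verifying the compressibility of the $1$-handle and that the induced decoration --- after the flip and a careful choice of $\motion_1$ on $S_3$ --- matches the hypotheses of Lemma \ref{lem:Ian} is the delicate part of the argument.
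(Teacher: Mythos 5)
Your plan is correct and follows essentially the same route as the paper: the Lemma \ref{lem:break} normal form, the JMZ-style doubling and tube-compression bookkeeping, Lemma \ref{lem:flip} to trade the flipped tube arising from the doubled non-orientable band for an orientable one, and Lemma \ref{lem:Ian} to evaluate the resulting decorated stabilisation as $U\cdot\id$. The differences are organisational --- you induct on bands as in Claim \ref{claim:bands_only}, whereas the paper uses the ``moreover'' clause of Lemma \ref{lem:break} to slide the single unorientable band to the top of the doubled cobordism, forming $\tdcobu$ and replacing it by $\tdcobo$ \emph{before} any compression, so that every appeal to Remark \ref{rem:propertiesZ}.\eqref{it:compression} (a relation established only in Zemke's oriented TQFT) and to Lemma \ref{lem:Ian} happens on an orientable decorated cobordism; accordingly, your compressions performed inside the still-unorientable composite should be localised to orientable slices or postponed until after the flip replacement, and the verification you defer --- that the chosen motions make $(\Sigma,\motion_1)$ a genuine disoriented cobordism (each component of $\Sigma\sm\motion_1$ orientable), which the paper checks separately for $L'$ a knot or a 2-component link, and that after the flip the decorations meet the hypotheses of Lemma \ref{lem:Ian} --- is indeed where the remaining work lies.
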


\begin{proof}
Using Lemma \ref{lem:break}, we can break the cobordism $\Sigma$ into the composition of cobordisms labelled \eqref{it:B}-\eqref{it:D}. Let $K_1'$ and $K_2'$ be the knots after steps \eqref{it:BS} and \eqref{it:US} respectively, as in the statement of Lemma \ref{lem:break}, and let $L'$ be the link after step \eqref{it:OS}.
Note that $L'$ differs from $K_2'$ by a single band surgery.

\begin{figure}
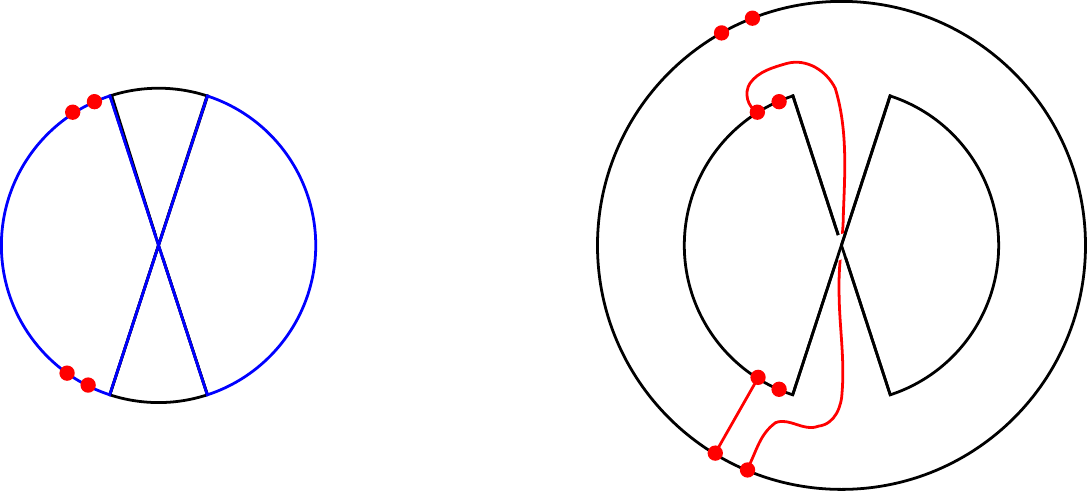
\caption{The circle on the left hand side represents $L'$ after step \eqref{it:OS}, in case it is a knot. The figure on the right hand side shows step \eqref{it:US} of the cobordism $\Sigma$, from $L'$ to $K_2'$, together with the motion chosen to define $\dcob$.}
\label{fig:step4}
\end{figure}

By removing the two attaching arcs of the unorientable band $B$ from $L'$, we are left with two arcs $\gamma$ and $\delta$.
If $L'$ is a knot, let $p_a, q_a, p_b, q_b$ be points on $\gamma$, appearing in this order, such that $p_a$ and $q_a$ are close to one end of $\gamma$ and $p_b$ and $q_b$ are close to the other end of $\gamma$, so that all the intersections of $L' \cap \gamma$ with the oriented bands are between $p_b$ and $q_a$. See the left hand side of Figure \ref{fig:step4}. If instead $L'$ is a 2-component link, let $p_a$ and $q_a$ be on $\gamma$ and $p_b$ and $q_b$ be on $\delta$, such that they are near opposite corners of the band, and $p_a$ (resp.\ $q_b$) is closer to the band than $q_a$ (resp.\ $p_b$). See Figure \ref{fig:step4link}.

\begin{figure}
\begingroup%
  \makeatletter%
  \providecommand\color[2][]{%
    \errmessage{(Inkscape) Color is used for the text in Inkscape, but the package 'color.sty' is not loaded}%
    \renewcommand\color[2][]{}%
  }%
  \providecommand\transparent[1]{%
    \errmessage{(Inkscape) Transparency is used (non-zero) for the text in Inkscape, but the package 'transparent.sty' is not loaded}%
    \renewcommand\transparent[1]{}%
  }%
  \providecommand\rotatebox[2]{#2}%
  \newcommand*\fsize{\dimexpr\f@size pt\relax}%
  \newcommand*\lineheight[1]{\fontsize{\fsize}{#1\fsize}\selectfont}%
  \ifx\svgwidth\undefined%
    \setlength{\unitlength}{314.71710071bp}%
    \ifx\svgscale\undefined%
      \relax%
    \else%
      \setlength{\unitlength}{\unitlength * \real{\svgscale}}%
    \fi%
  \else%
    \setlength{\unitlength}{\svgwidth}%
  \fi%
  \global\let\svgwidth\undefined%
  \global\let\svgscale\undefined%
  \makeatother%
  \begin{picture}(1,0.44914384)%
    \lineheight{1}%
    \setlength\tabcolsep{0pt}%
    \put(0,0){\includegraphics[width=\unitlength,page=1]{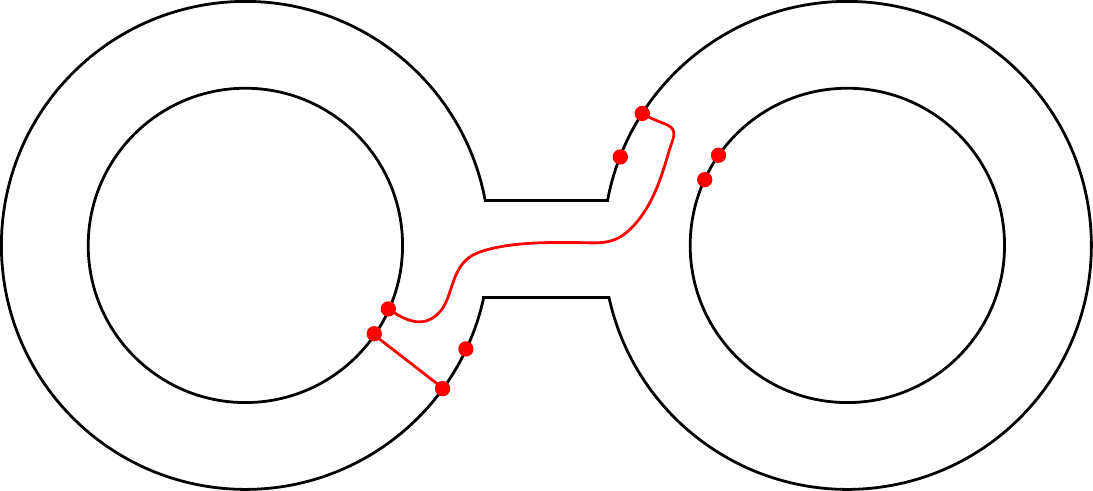}}%
    \put(0.92042096,0.41037653){\color[rgb]{0,0,0}\makebox(0,0)[lt]{\lineheight{0}\smash{\begin{tabular}[t]{l}\small $K_2'$\end{tabular}}}}%
    \put(0.86607843,0.21428213){\color[rgb]{0,0,0}\makebox(0,0)[lt]{\lineheight{0}\smash{\begin{tabular}[t]{l}\small $L'$\end{tabular}}}}%
    \put(0.2983268,0.14344225){\color[rgb]{0,0,0}\makebox(0,0)[lt]{\lineheight{0}\smash{\begin{tabular}[t]{l}\small $q_a$\end{tabular}}}}%
    \put(0.3202817,0.18666787){\color[rgb]{0,0,0}\makebox(0,0)[lt]{\lineheight{0}\smash{\begin{tabular}[t]{l}\small $p_a$\end{tabular}}}}%
    \put(0.67307025,0.30377996){\color[rgb]{0,0,0}\makebox(0,0)[lt]{\lineheight{0}\smash{\begin{tabular}[t]{l}\small $p_b$\end{tabular}}}}%
    \put(0.64801372,0.26419782){\color[rgb]{0,0,0}\makebox(0,0)[lt]{\lineheight{0}\smash{\begin{tabular}[t]{l}\small $q_b$\end{tabular}}}}%
    \put(0.0923841,0.21791345){\color[rgb]{0,0,0}\makebox(0,0)[lt]{\lineheight{0}\smash{\begin{tabular}[t]{l}\small $L'$\end{tabular}}}}%
  \end{picture}%
\endgroup%

\caption{The figure shows step \eqref{it:US} of $\dcob$ in case $L'$ is a 2-component link, represented above by the two inner circles.}
\label{fig:step4link}
\end{figure}

Let $\dcob$ be the disoriented cobordism from $(K_1, p_a, q_a)$ to $(K_2, p_b, q_a)$ obtained by endowing $\Sigma$ with the following motion of basepoints:
\begin{itemize}
\item on steps \eqref{it:B}-\eqref{it:OS}, the motion consists of straight arcs $I \times \set{p_a, q_a}$;
\item on step \eqref{it:US}, the motion consists of a straight arc $I \times \set{q_a}$ and of an arc that starts from $p_a$, goes through the unorientable saddle, and ends at $p_b$ (see Figure \ref{fig:step4} for an illustration);
\item on steps \eqref{it:DS}-\eqref{it:D}, the motion consists of straight arcs $I \times \set{p_b, q_a}$.
\end{itemize}
A crucial condition in Definition \ref{def:dcob} is that each component of $\Sigma\sm\motion_1$ must be orientable. In fact, we show that $\Sigma \setminus \motion_1$ consists of a single and orientable component. If $L'$ is a knot, one can check from Figure \ref{fig:step4} that $\Sigma\setminus\motion_1$ restricted to step \eqref{it:US} has a single component; in steps \eqref{it:B}-\eqref{it:OS} the surface $\Sigma$ is orientable and the motion is given by two parallel arcs, so there are two components of $\Sigma\setminus\motion_1$, which are then glued to the unique component in step \eqref{it:US}; steps \eqref{it:DS}-\eqref{it:D} define a concordance of disoriented knots, which does not change the abstract topology of the disoriented cobordism.
The compatibility of the orientation of $\Sigma\sm\motion_1$ with the orientation of $\motion_1$ is dealt with in an analogous way.

If instead $L'$ is a 2-component link, then one should consider Figure \ref{fig:step4link} instead. Let $\zeta$ be the closed component of $L'$ containing $p_b$ and $q_b$ (which appears on the right in Figure \ref{fig:step4link}), and let $\varepsilon$ be the component containing $p_a$ and $q_a$, minus the short arc connecting $p_a$ and $q_a$ (which appears on the left in Figure \ref{fig:step4link}). From Figure \ref{fig:step4link} it is immediate to check that $\Sigma\setminus\motion_1$ has 2 components in step \eqref{it:US}, which deformation retract on $\zeta$ and on $\varepsilon$.
Since $K_1$ is a knot, $\Sigma\setminus\motion_1$ in steps \eqref{it:B}-\eqref{it:OS} also has two components: a ``small'' rectangular one ($S$), spanned by the short arc connecting $p_a$ and $q_a$, and a large one ($L$) the complement of it, which contains all the genus. When you glue steps \eqref{it:B}-\eqref{it:OS} to step \eqref{it:US}, the rectangular component $S$ is glued to the component containing $\zeta$, without affecting the topology, whereas the component $L$ glues to both component of step \eqref{it:US}. Thus, we see that there is only one component of $\Sigma\setminus\motion_1$. Its orientability and the compatibility with the orientation of $\motion_1$ is left to the reader (it basically follows from the fact that cutting along $\motion_1$ effectively cuts the unorientable saddle, leaving an orientable cobordism). As before, we do not worry about steps \eqref{it:DS}-\eqref{it:D}, since they define a concordance, which does not change the abstract topology.

We next introduce a disoriented cobordism $\m\dcob$ from $(K_2, p_b, q_a)$ to $(K_1, p_a, q_a)$ with underlying surface $\m\Sigma$.
To define it, we play the steps of the cobordism $\dcob$ in reverse order, but we use a different motion of basepoints:
\begin{itemize}
\item on the reversed steps \eqref{it:D}-\eqref{it:DS}, the motion consists of straight arcs $I \times \set{p_b, q_a}$;
\item on the reversed step \eqref{it:US}, the motion consists of a straight arc $I \times \set{p_b}$ and of an arc that starts from $q_a$, goes through the (dual) unorientable saddle, and ends at $q_b$;
\item on the reversed steps \eqref{it:OS}-\eqref{it:B}, the motion consists of straight arcs $I \times \set{p_b, q_b}$;
\item finally, in a collar of the $K_1$ boundary component, the motion of the basepoints brings $p_b$ and $q_b$ back to $p_a$ and $q_a$.
\end{itemize}
Note that $\m\dcob$ is not $\dcob$ turned upside down as disoriented cobordisms (even if the disoriented knots at the boundary are not the same). 

We also define a disoriented cobordism $\tdcobu$ from $(K_1, p_a, q_a)$ to $(K_1, p_a, q_a)$, obtained in three steps:
\begin{itemize}
\item the first step is the same disoriented cobordism as in Figure \ref{fig:step4}, except that the knot $L'$ is replaced with $K_1$; more explicitly, the surface $\Sigma$ in the first step consists of the cylinder $I \times K_1$, with the unorientable band $B$ attached on the upper end (recall that by Lemma \ref{lem:break} all bands have disjoint attaching arcs, so we can attach $B$ to $K_1$), and the motion of the basepoints consists of $I \times \set{q_a}$ and of an arc that starts from $p_a$, goes through the band, and ends at $p_b$;
\item the surface in the second step is simply the surface from the first step turned upside down, and the motion consists of a straight arc $I \times \set{p_b}$ and of an arc that starts from $q_a$, goes through the (dual) band, and ends at $q_b$;
\item finally, in a collar of the end boundary component, the motion of the basepoints brings $p_b$ and $q_b$ back to $p_a$ and $q_a$.
\end{itemize}
Note that the surface $\Sigma$ of the disoriented cobordism $\tdcobu$ is a genus-1 surface, consisting of a cylinder $I \times K_1$ with a flipped tube attached to it. The flipped tube is made up of the two unorientable bands. See the left hand side of Figure \ref{fig:tdcob}.

\begin{figure}
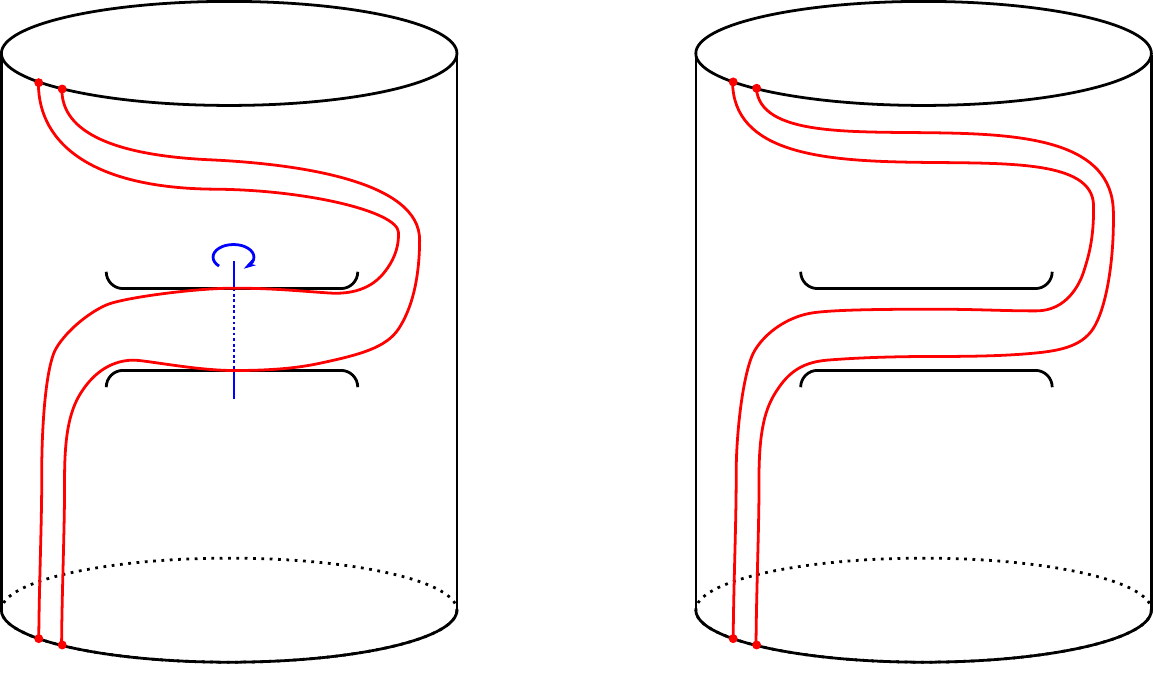
\caption{The figure on the left represents the disoriented cobordism $\tdcobu$. The figure on the right represents the disoriented cobordism $\tdcobo$.}
\label{fig:tdcob}
\end{figure}

Lastly, we define a variant of $\tdcobu$:
the disoriented cobordism $\tdcobo$ from $(K_1, p_a, q_a)$ to $(K_1, p_a, q_a)$ is obtained by replacing the flipped tube in $\tdcobu$ with an orientable tube, so that the underlying surface $\Sigma$ is orientable (in other words, the unorientable bands are replaced with orientable bands); the motion of the basepoints divides $\Sigma$ into a disc and a punctured torus; see the right hand side of Figure \ref{fig:tdcob}.
Note that Lemma \ref{lem:flip} implies that $\Hmap_{\tdcobu}=\Hmap_{\tdcobo}$,
since it is possible to isolate a flip cobordism.

In order to prove Proposition \ref{prop:HFK} we argue in a similar way as in \cite[Proposition 4.1]{JMZ}: we define a cobordism $\gdcobu$, and we compute the map $\Hmap_\gdcobu$ in two different ways, which will be the two sides of equation \eqref{eq:HFK}.

The disoriented cobordism $\gdcobu$, from $(K_1, p_a, q_a)$ to itself, is obtained by playing all the steps of $\dcob$ except \eqref{it:D} followed by all the reversed steps of $\m\dcob$ except \eqref{it:D}. (In $\m\dcob$ we also play the basepoint moving step in a collar of $K_1$.

Since the attaching arc of the unoriented band can be isotoped in $K_1$, we can move step \eqref{it:US} of $\dcob$ and the corresponding reversed step of $\m\dcob$ up past all the other steps of $\dcob$ and $\m\dcob$.
These two steps together make up the cobordism $\tdcobu$, which we can replace with the orientable cobordism $\tdcobo$. The replacement yields a new disoriented cobordism $\gdcobo$ from $\gdcobu$, with $\Hmap_{\gdcobo} = \Hmap_{\gdcobu}$. The advantage of $\gdcobo$ over $\gdcobu$ is that the underlying surface of the former is orientable, so $\Hmap_{\gdcobo} = \Zmap_{\gdlcob}$ for a compatible decorated link cobordism $\gdlcob$, and we can use the properties of Zemke's TQFT on $\Zmap_{\gdlcob}$, in particular the one about compressing discs.

Note that in the definition of $\gdcobu$ (or $\gdcobo$) we do not play the $M$ deaths of $\dcob$ (step \eqref{it:D}) and the $M$ births of $\m\dcob$, obtained by mirroring the deaths of $\dcob$. Thus, $\m{\dcob} \circ \dcob$ is obtained from $\gdcobu$ by compressing $M$ discs with boundary in the complement of the motion of the basepoints. By transiting through their orientable replacements, and by Remark \ref{rem:propertiesZ}.\eqref{it:compression}, we get
\begin{equation}
\label{eq:proofHFK1}
\Hmap_{\gdcobu} = U^M \cdot \circ \Hmap_{\m\dcob} \circ \Hmap_{\dcob}.
\end{equation}

On the other hand, we saw earlier that the cobordism $\gdcobo$ can be re-arranged so that we have $\tdcobo$ at the top. The first part consists of a cylindrical cobordism from $(K_1, p_a, q_a)$ to itself with $b-1-m$ tubes added, as in \cite{JMZ}. (The $-1$ here comes from the fact that we have moved the unorientable band to the top of the cobordism.) Thus, we can compress the cobordism $\gdcobo$ $b-1-m$ times to get $\tdcobo$, so
\[
\Hmap_{\gdcobo} = U^{b-1-m} \cdot \Hmap_{\tdcobo}.
\]
But the cobordism $\tdcobo$ is of the form studied in Lemma \ref{lem:Ian}, so the map that it induces is the multiplication by $U$. Thus:
\begin{equation}
\label{eq:proofHFK2}
\Hmap_{\gdcobu} = \Hmap_{\gdcobo} = U^{b-m} \cdot \id_{\HFLum(K_1)}.
\end{equation}
By combining Equations \eqref{eq:proofHFK1} and \eqref{eq:proofHFK2} we finish the proof.
\end{proof}


\begin{remark}
The careful reader will note that the motion of the basepoints play an important role in the proof of Proposition \ref{prop:HFK}. This is by contrast with \cite[Proposition 4.1]{JMZ}, where the decorations of the cobordism were the simplest possible, i.e.~two parallel arcs from the bottom to the top.
In the unoriented setting it is impossible to choose two parallel arcs as the motion of basepoints, otherwise the cobordism would not fall in the correct category.
\end{remark}

\section{Applications}
\label{sec:corollaries}

In this section we prove Theorem \ref{thm:main}, which we restate below, together with its corollaries.

{\renewcommand{\thetheorem}{\ref{thm:main}}
\begin{theorem}
Let $K_1$ and $K_2$ be knots in $S^3$. Suppose that there is an unorientable knot cobordism $\Sigma$ in $I \times S^3$ from $K_1$ to $K_2$ with $M$ local maxima. Then
{\renewcommand{\theequation}{\ref{eq:I_main}}
\begin{equation}
\OrdI(K_1) \leq \max\set{\OrdI(K_2), M} + \gamma(\Sigma)
\end{equation}
\addtocounter{equation}{-1}
}
and
{\renewcommand{\theequation}{\ref{eq:U_main}}
\begin{equation}
\OrdU(K_1) \leq \max\set{\OrdU(K_2), M} + \gamma(\Sigma).
\end{equation}
\addtocounter{equation}{-1}
}
\end{theorem}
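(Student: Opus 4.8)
The two inequalities of Theorem~\ref{thm:main} follow from the technical Propositions~\ref{prop:I} and~\ref{prop:HFK} by the same short piece of commutative algebra, with the special element $P\in\cS$ playing in the instanton world the role that $U\in\F[U]$ plays in the knot Floer world. The genuine difficulty — the choice of complicated decorations forced on us in the non-orientable setting — is entirely absorbed into those two propositions, so what remains is a routine deduction mirroring the proof of \cite[Theorem~1.1]{JMZ}. I will write the instanton case in full and then indicate the (formally identical) $\HFKum$ case.

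\textbf{Set-up.} Realise the cobordism $\Sigma$ by a Morse function from the $I$-coordinate (after isotopy), so that $\Sigma$ is built from $K_1$ by $m$ births, $b$ saddles, and $M$ deaths. Then $\chi(\Sigma)=m-b+M$, and since $\Sigma$ is an unorientable knot cobordism, $\gamma(\Sigma)=-\chi(\Sigma)=b-m-M$; in particular the individual values of $m$ and $b$ are irrelevant, only the combination $b-m-M=\gamma(\Sigma)$ matters. Apply Proposition~\ref{prop:I} with $S=\Sigma$: we obtain a surface $\omega$ meeting $S$ cleanly at $\partial\omega\subset S$ so that, writing $G=I^\sharp(S)_\omega\colon I^\sharp(K_1)\to I^\sharp(K_2)$ and $\m G=I^\sharp(\m S)_{\m\omega}\colon I^\sharp(K_2)\to I^\sharp(K_1)$, functoriality of $I^\sharp$ gives the identity
\[
P^M\cdot\bigl(\m G\circ G\bigr)=P^{\,b-m}\cdot\Id_{I^\sharp(K_1)}
\]
of $\cS$-module endomorphisms of $I^\sharp(K_1)$.

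\textbf{The deduction.} Put $n=\max\set{\OrdI(K_2),M}$, and note $n\geq M$ and $n\geq\OrdI(K_2)$. Let $x\in\Tors\subset I^\sharp(K_1)$ be any torsion element. Because $G$ is a homomorphism of $\cS$-modules it carries the torsion submodule into the torsion submodule, so $Gx$ is torsion in $I^\sharp(K_2)$ and hence $P^n\cdot Gx=0$. Multiplying the displayed identity by $P^{\,n-M}$ (which is legitimate as a power of $P$ precisely because $n\geq M$) and evaluating at $x$, and pulling the scalar $P^n$ through the $\cS$-linear map $\m G$, we get
\[
P^{\,n-M+b-m}\cdot x=P^{\,n-M}\bigl(P^{\,b-m}\cdot\Id\bigr)(x)=P^{\,n-M}\bigl(P^M\cdot\m G\circ G\bigr)(x)=\m G\bigl(P^n\cdot Gx\bigr)=0.
\]
Since $x\in\Tors$ was arbitrary, $P^{\,n-M+b-m}\cdot\Tors=0$, i.e.
\[
\OrdI(K_1)\ \leq\ n-M+b-m\ =\ \max\set{\OrdI(K_2),M}+\gamma(\Sigma),
\]
which is inequality~\eqref{eq:I_main}.

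\textbf{The $\HFKum$ case and the main obstacle.} This runs word for word in the same way. Apply Proposition~\ref{prop:HFK} to obtain disoriented knot cobordisms $\dcob=(\Sigma,\motion_1)$ from $(K_1,p_a,q_a)$ to $(K_2,p_b,q_a)$ and $\m\dcob=(\m\Sigma,\motion_2)$ back, with $U^M\cdot\Hmap_{\m\dcob}\circ\Hmap_{\dcob}=U^{\,b-m}\cdot\id_{\HFKum(K_1)}$. Since $\Hmap_{\dcob}$ is $\F[U]$-equivariant it preserves the torsion submodule (Remark~\ref{rem:propertiesHFLum}.\eqref{it:mirror}, or rather item~(2) of that remark), and since $\OrdU$ does not depend on the choice of basepoints on $K_1$ or $K_2$, the identical computation — with $U$ in place of $P$ and $n=\max\set{\OrdU(K_2),M}$ — yields $\OrdU(K_1)\leq\max\set{\OrdU(K_2),M}+\gamma(\Sigma)$, which is~\eqref{eq:U_main}. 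The only bookkeeping point in this final step is the identification $b-m-M=\gamma(\Sigma)$ together with the inequality $n\geq M$, which is exactly what lets us cancel the factor $P^M$ (resp.\ $U^M$) instead of being forced to carry it; everything substantive has already been done in Propositions~\ref{prop:I} and~\ref{prop:HFK}.
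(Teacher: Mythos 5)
Your argument is correct and is essentially the paper's own proof: both reduce Theorem \ref{thm:main} to the identities of Propositions \ref{prop:I} and \ref{prop:HFK} and then run the same torsion-order computation, multiplying by $v^{\,n-M}$ with $n=\max\{\Ord(K_2),M\}$ and using $\gamma(\Sigma)=-\chi(\Sigma)=b-m-M$. The only cosmetic difference is that you phrase the instanton identity via functoriality as $P^M\cdot\m G\circ G=P^{b-m}\cdot\Id$ rather than for the composite cobordism, which is exactly how the paper uses it as well.
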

\addtocounter{theorem}{-1}
}

\begin{proof}
The proof closely follows that of \cite[Theorem 1.1]{JMZ}.

Add decorations on $\Sigma$ and $\overline\Sigma$ (in the instanton or unoriented knot Floer sense) to obtain cobordisms with decorations $\mathcal{S}$ and $\overline{\mathcal{S}}$ that satisfy the relation in Proposition \ref{propn:main_technical} or \ref{prop:HFK}
\begin{equation}
\label{eq:JMZ-main}
v^M \cdot F_{\overline {\mathcal S}} \circ F_{\mathcal S} = v^{b-m} \cdot \id_{H(K_1)}.
\end{equation}
Here $m$ is the number of local minima and $b$ is the number of saddles on $\Sigma$, $H$ denotes either $I^\sharp$ or $\HFKum$, $F$ denotes the corresponding map induced by an unoriented cobordisms with decorations, and $v$ denotes the relevant variable $P$ or $U$.

Suppose that $x \in H(K_1)$ is a torsion element. Since $F_{\mathcal S}(x)$ must be torsion in $H(K_2)$, $v^\ell \cdot F_{\overline {\mathcal S}} \circ F_{\mathcal S} (x) = F_{\overline {\mathcal S}} (v^\ell \cdot F_{\mathcal S} (x)) = 0$ whenever $\ell \geq \Ord(K_2)$.
Thus, in view of Equation \eqref{eq:JMZ-main}, $v^{\ell+b-m-M} \cdot x = 0$ whenever $\ell \geq \max\{\Ord(K_2), M\}$. Since this holds for every torsion element $x \in H(K_1)$, we obtain
\[
\Ord(K_1) \leq \max\{\Ord(K_2), M\} + b - m - M,
\]
and we conclude by noticing that $\gamma(\Sigma) = -\chi(\Sigma) = b-m-M$.
%
\end{proof}
%
%

We now focus on the proofs of the Corollaries from the introduction.
Corollary \ref{cor:ribbon} follows immediately from Theorem \ref{thm:main} by setting $M=0$, so we move directly to the following corollary, about the refined unoriented cobordism distance.

Recall that for a cobordism $\Sigma$ in $I \times S^3$ from $K_1$ to $K_2$, we define
\[
|\Sigma| = \max\set{m,M} -\chi(\Sigma),
\]
and that the \emph{refined unorientable cobordism distance} between two knots $K_1$ and $K_2$ is given by 
\[
d_u^r(K_1, K_2) = \min\set{|\Sigma|},
\]
where $\Sigma$ varies over all connected unorientable cobordisms and oriented concordances from $K_1$ to $K_2$.

%
%

{\renewcommand{\thetheorem}{\ref{cor:dur}}
\begin{corollary}
If $K_1$ and $K_2$ are knots in $S^3$, then
\[
|\OrdI(K_1) - \OrdI(K_2)| \leq d_u^r(K_1,K_2)
\]
and
\[
|\OrdU(K_1) - \OrdU(K_2)| \leq d_u^r(K_1,K_2).
\]
\end{corollary}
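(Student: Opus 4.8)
The plan is to obtain this as a direct consequence of Theorem \ref{thm:main}, supplemented by Remark \ref{rem:all} so that orientable concordances are also covered, via an elementary case analysis applied to a cobordism $\Sigma$ and to its upside-down mirror $\overline{\Sigma}$.

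First I would fix a connected cobordism $\Sigma$ from $K_1$ to $K_2$ which is either unorientable or a concordance, with $m$ local minima, $b$ saddles, and $M$ local maxima, and write $\Ord$ for either $\OrdI$ or $\OrdU$. The proof of Theorem \ref{thm:main} actually produces the bound $\Ord(K_1) \leq \max\set{\Ord(K_2), M} + b - m - M$, and $b-m-M = -\chi(\Sigma)$ (which equals $\gamma(\Sigma)$ when $\Sigma$ is unorientable and is $0$ for a concordance), so $|\Sigma| = \max\set{m,M} - \chi(\Sigma)$ in all the cases considered. I would then check that $\Ord(K_1) - \Ord(K_2) \leq |\Sigma|$: if $M \geq \Ord(K_2)$ then $\Ord(K_1) \leq M - \chi(\Sigma) \leq \max\set{m,M} - \chi(\Sigma) = |\Sigma|$, and since torsion orders are nonnegative this forces $\Ord(K_1) - \Ord(K_2) \leq |\Sigma|$; if instead $M < \Ord(K_2)$ then $\Ord(K_1) \leq \Ord(K_2) - \chi(\Sigma)$, so $\Ord(K_1) - \Ord(K_2) \leq -\chi(\Sigma) \leq |\Sigma|$ because $\max\set{m,M} \geq 0$.

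Next I would run the same argument with the upside-down mirror $\overline{\Sigma}$, which is again a connected unorientable cobordism (or a concordance) from $K_2$ to $K_1$, this time with $M$ minima, $b$ saddles, $m$ maxima, and $\chi(\overline{\Sigma}) = \chi(\Sigma)$, whence $|\overline{\Sigma}| = |\Sigma|$; this gives $\Ord(K_2) - \Ord(K_1) \leq |\Sigma|$. Combining the two inequalities yields $|\Ord(K_1) - \Ord(K_2)| \leq |\Sigma|$, and minimising over all admissible $\Sigma$ gives $|\Ord(K_1) - \Ord(K_2)| \leq d_u^r(K_1, K_2)$ for both $\Ord = \OrdI$ and $\Ord = \OrdU$. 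I do not expect any genuine obstacle here; the only points that need care are the bookkeeping identifying $b-m-M$ with $-\chi(\Sigma)$ uniformly across the orientable and unorientable cases, and the appeal to Remark \ref{rem:all} to legitimise the concordance terms in the minimum defining $d_u^r$.
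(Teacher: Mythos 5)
Your proposal is correct and is essentially the paper's own argument: apply Theorem \ref{thm:main} (with Remark \ref{rem:all} for orientable concordances) to a cobordism realising $d_u^r$, deduce $\Ord(K_1)-\Ord(K_2)\leq \max\set{m,M}-\chi(\Sigma)=|\Sigma|$, then symmetrise via the reversed cobordism and minimise. The only cosmetic difference is that the paper replaces your two-case analysis by the one-line estimate $\max\set{\Ord(K_2),M}\leq \Ord(K_2)+M$.
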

\addtocounter{theorem}{-1}
}
\begin{proof}
The proof follows closely that of \cite[Corollary 1.5]{JMZ}.
Given a cobordism $\Sigma$ from $K_1$ to $K_2$ with $M$ maxima and $m$ minima of the kind considered in the definition of $d_u^r$, by Theorem \ref{thm:main} (if $\Sigma$ is unorientable) and Remark \ref{rem:all} (if $\Sigma$ is an orientable concordance) we have
\[
\Ord(K_1) \leq \max\{\Ord(K_2), M\} - \chi(\Sigma) \leq \Ord(K_2) + M -\chi(\Sigma),
\]
where $\Ord$ is either $\OrdI$ or $\OrdU$. From here we get
\[
\Ord(K_1) - \Ord(K_2) \leq M -\chi(\Sigma) \leq \max\{m,M\} - \chi(\Sigma),
\]
and we conclude by exchanging the roles of $K_1$ and $K_2$, and taking the minimum on the right hand side.
\end{proof}

%
%

Recall that the \emph{unoriented band-unlinking number} $\ubuln(K)$ of a knot $K$ in $S^3$ is defined as the minimum number of (orientable or unorientable) band surgeries that turn $K$ into an unlink.

{\renewcommand{\thetheorem}{\ref{cor:ubuln}}
\begin{corollary}
For a knot $K$ in $S^3$,
\[
\OrdI(K) \leq \ubuln(K) \qquad \text{and} \qquad \OrdU(K) \leq \ubuln(K).
\]
\end{corollary}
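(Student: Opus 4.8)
The plan is to realise the band-unlinking number geometrically and feed the resulting cobordism into Theorem \ref{thm:main}. First I would set $n = \ubuln(K)$ — which is finite, since $\ubuln(K) \le \ubun(K) < \infty$ — and fix a sequence of $n$ band surgeries (orientable or not) carrying $K$ to an unlink $U_\ell$, $\ell \ge 1$. Stacking these produces a connected cobordism $\Sigma_0 \subset I \times S^3$ from $K$ to $U_\ell$ with exactly $n$ saddles and no local extrema. Capping $\ell - 1$ of the $\ell$ components of $U_\ell$ with disjoint embedded disks then yields a connected knot cobordism $\Sigma$ from $K$ to the unknot $U_1$ with $m = 0$ local minima, $b = n$ saddles, and $M = \ell - 1$ local maxima; in particular $b - m - M = -\chi(\Sigma) = n - (\ell-1)$, which is $\gamma(\Sigma)$ when $\Sigma$ is unorientable.

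Next I would apply the torsion-order inequality to this $\Sigma$: Theorem \ref{thm:main} if $\Sigma$ is unorientable, and its orientable counterpart recorded in Remark \ref{rem:all} if $\Sigma$ happens to be orientable. Either way one obtains, for $\Ord \in \{\OrdI, \OrdU\}$,
\[
\Ord(K) \le \max\{\Ord(U_1), M\} + (b - m - M) = \max\{\Ord(U_1), \ell-1\} + n - (\ell-1).
\]
Then I would use that the unknot is torsion-free in both theories: $\HFKum(U_1) \cong \F[U]$ by Remark \ref{rem:propertiesHFLum}, and $I^\sharp(U_1) \cong \cS u_+ \oplus \cS u_-$ is a free module over the domain $\cS$ by the properties collected in Section \ref{sec:I_properties}; hence $\OrdI(U_1) = \OrdU(U_1) = 0$. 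Since $\ell \ge 1$ gives $\max\{0,\ell-1\} = \ell - 1$, the bound collapses to $\Ord(K) \le n = \ubuln(K)$ for both $\OrdI$ and $\OrdU$, which is the claim.

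I do not expect a genuine obstacle here: the substance lies entirely in Theorem \ref{thm:main}, and this is a formal corollary of it, in exact analogy with the bound $\Order_v(K) \le \buln(K)$ of \cite{JMZ}. The only points that need a moment's care are bookkeeping ones — checking that $\Sigma$ is connected (it is, being obtained from $I \times K$ by attaching bands and then disks along boundary circles of the already connected surface), and that Theorem \ref{thm:main} may be invoked irrespective of whether $\Sigma$ is orientable, which is precisely what Remark \ref{rem:all} provides.
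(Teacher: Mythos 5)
Your argument is correct and is essentially the paper's own proof: attach the $\ubuln(K)$ bands to reach an unlink, cap off all but one component to get a cobordism to $U_1$, and apply Theorem \ref{thm:main} (or Remark \ref{rem:all} in the orientable case) together with the vanishing of the torsion order of the unknot. The bookkeeping $\max\{0,\ell-1\}+b-(\ell-1)=b$ matches the paper's $\chi(\Sigma)=M-b$ computation exactly.
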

\addtocounter{theorem}{-1}
}
\begin{proof}
The proof is similar to the one of \cite[Corollary 1.6]{JMZ}.
If $b = \ubuln(K)$, one can build a cobordism $\Sigma$ from $K$ to the unknot $U$ with $b$ saddles and $M$ local maxima, by attaching $b$ bands to $K$ to get an $(M+1)$-component unlink and then capping off $M$ components of the latter.
By applying Theorem \ref{thm:main}, and Remark \ref{rem:all} if necessary (i.e., if $\Sigma$ is orientable), we get (for $I^\sharp$ or $\HFKum$)
\[
\Ord(K) \leq \max\{\Ord(U), M\} - \chi(\Sigma) = M -\chi(\Sigma),
\]
since the unknot has vanishing torsion order in both $I^\sharp$ and $\HFKum$.
We conclude by noticing that $\chi(\Sigma) = M-b$.
\end{proof}


\section{Examples}
\label{sec:examples}

\begin{lemma}
\label{lem:TOtorusknots}
For the torus knot $T_{n,n+1}$, $\OrdU(T_{n,n+1}) = \floor{n/2}$.
\end{lemma}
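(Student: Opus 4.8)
The plan is to compute the $\F[U]$-module $\HFKum(T_{n,n+1})$ explicitly from the staircase description of $\CFK^\infty$ of an L-space knot. First I would record the shape of the staircase: from the numerical semigroup $\langle n,n+1\rangle$ one computes the symmetrized Alexander polynomial $\tilde\Delta_{T_{n,n+1}}(t)=\sum_{k=0}^{n-2}\bigl(t^{kn}-t^{kn+k+1}\bigr)+t^{n(n-1)}$, which shows that the staircase complex of $T_{n,n+1}$ has horizontal step lengths $b_i=i+1$ and vertical step lengths $c_i=n-1-i$ for $i=0,\dots,n-2$ (alternatively one can quote the standard description of $\CFK^\infty$ of torus knots). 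Since $\CFKum$ is obtained from this complex by specialising $U=V$ (cf.\ Section~\ref{sec:bgHFK}) and collapsing the grading, it becomes the free $\F[U]$-complex on cycle generators $e_0,\dots,e_{n-1}$ together with generators $f_0,\dots,f_{n-2}$ satisfying $\partial f_i=U^{i+1}e_i+U^{n-1-i}e_{i+1}$. As the $e_i$ span all the cycles, $\HFKum(T_{n,n+1})\cong\coker A$, where $A$ is the $(n-1)\times n$ bidiagonal matrix with $A_{i,i}=U^{i+1}$ and $A_{i,i+1}=U^{n-1-i}$.

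Next I would pass to the Smith normal form over the PID $\F[U]$. Writing $\HFKum(T_{n,n+1})\cong\F[U]\oplus\bigoplus_{k=1}^{n-1}\F[U]/(U^{d_k})$ with $d_1\le\cdots\le d_{n-1}$, we have $\OrdU(T_{n,n+1})=d_{n-1}=\deg\Delta_{n-1}(A)-\deg\Delta_{n-2}(A)$, where $\Delta_k(A)$ is the gcd of the $k\times k$ minors of $A$. Because $A$ is bidiagonal with monomial entries, each nonzero minor of $A$ is a single power of $U$, so $\deg\Delta_k(A)$ equals the minimum degree over all nonzero $k\times k$ minors. A direct bookkeeping shows: the minor of $A$ obtained by deleting column $j$ has degree $u(j):=\binom{j+1}{2}+\binom{n-j}{2}=j^2-(n-1)j+\binom n2$, so $\deg\Delta_{n-1}(A)=\min_j u(j)$; and the nonzero $(n-2)\times(n-2)$ minors are exactly those obtained by deleting a row $\rho$ together with one column $a\le\rho$ and one column $c\ge\rho+1$, such a minor having degree $D(\rho,a,c)=u(a)+u(c)-u(\rho)-(\rho+1)$.

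The core of the proof is then the elementary identity $\min_{a\le\rho<c}D(\rho,a,c)=\min_j u(j)-\floor{n/2}$. Using the reflection symmetry of the staircase (which identifies the minor for $(\rho,a,c)$ with the one for $(n-2-\rho,\,n-1-c,\,n-1-a)$) one may assume $\rho\le\floor{(n-2)/2}$; since $u(x)-u(x-1)=2x-n$, the function $u$ is weakly decreasing on $\{0,\dots,\floor{n/2}\}$, so $u(a)\ge u(\rho)$ and hence $D(\rho,a,c)\ge u(c)-(\rho+1)\ge\min_j u(j)-\floor{n/2}$; equality is attained by taking $\rho=\floor{n/2}-1$, $a=\rho$, and $c$ an argmin of $u$. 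Combining the two displays gives $\OrdU(T_{n,n+1})=\deg\Delta_{n-1}(A)-\deg\Delta_{n-2}(A)=\floor{n/2}$.

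I expect the main obstacle to be purely combinatorial: correctly enumerating which minors of the bidiagonal matrix $A$ are nonzero (and checking that each such minor is a single monomial, so that cancellation does not interfere with the gcd), and then organising the two-variable minimisation cleanly. A lesser point requiring care is the justification of the precise staircase shape of $T_{n,n+1}$, for which the semigroup computation above — or a citation to the known structure of $\CFK^\infty$ of torus knots — suffices.
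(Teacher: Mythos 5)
Your proposal is correct, and its first half coincides with the paper's argument: both start from the $L$-space staircase, extract the gap sequence $(1,\,n-1,\,2,\,n-2,\ldots)$ from $\Delta_{T_{n,n+1}}$, and arrive at the same bidiagonal $\F[U]$-complex with $\de f_i = U^{i+1}e_i + U^{n-1-i}e_{i+1}$. Where you genuinely diverge is in extracting $\OrdU$ from this complex. The paper views it as the standard complex of a graded root and reads the answer off the branch structure: the longest branch, of length $\ceil{n/2}$, carries the $\F[U]$-tower, and the next longest, of length $\floor{n/2}$, gives the torsion order. You instead compute $\coker A$ of the $(n-1)\times n$ presentation matrix via determinantal divisors and Smith normal form; I checked the key ingredients and they hold: every nonzero minor of $A$ is a single monomial because the incidence graph of its nonzero entries is a path, so each square submatrix admits at most one matching and no cancellation can occur; the degree formulas $u(j)=\binom{j+1}{2}+\binom{n-j}{2}$ and $D(\rho,a,c)=u(a)+u(c)-u(\rho)-(\rho+1)$ are correct, as is the identity $\min_{a\le\rho<c}D=\min_j u(j)-\floor{n/2}$ (the reflection $D(n-2-\rho,n-1-c,n-1-a)=D(\rho,a,c)$ and the monotonicity of $u$ on $\{0,\dots,\floor{n/2}\}$ give the lower bound, attained at $\rho=\floor{n/2}-1$, $a=\rho$, $c$ an argmin of $u$). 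Your route is more elementary and self-contained, avoiding the graded-root formalism entirely, at the price of the combinatorial bookkeeping of minors; the paper's route is quicker once that formalism is accepted and additionally exhibits the whole torsion module (all branch lengths), not just its largest exponent. Two points you should state explicitly: $A$ has full row rank over $\F[U]$ (e.g.\ the minor deleting the last column is triangular with monomial diagonal), so the $e_i$ really do span the cycles and $\HFKum\cong\coker A$ with free rank one; and the degenerate case $n=1$, where the minor argument is vacuous but the statement is trivially true for the unknot.
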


\begin{proof}
Any torus knot is an $L$-space knot, so its Alexander polynomial determines the full knot Floer complex $\CFK^\infty$ up to chain homotopy equivalence, and in turn the unoriented knot Floer homology. See \cite{OSz-Lens, Upsilon, JMZ}.

If $K$ is an $L$-space knot, its Alexander polynomial takes the form
\begin{equation}
\label{eq:Alex}
\Delta_K(t) = \sum_{k=0}^{2l} (-1)^k t^{\a_k}
\end{equation}
for a decreasing sequence of integers $\a_0, \ldots, \a_{2l}$. Let $d_1, \ldots, d_{2l}$ denote the gaps, i.e., $d_k = \a_k-\a_{k-1}$.
Then the full knot Floer complex is (up to chain homotopy equivalence) a staircase $\F[U, U^{-1}]$-module, generated by $\x_0, \ldots, \x_{2l}$, with
\[
\de \x_{2k} = 0, \qquad \de\x_{2k+1} = \x_{2k}+\x_{2k+2}.
\]
Moreover, the filtration over $\Z \oplus \Z$ is determined up to an overall shift by the following properties:
\begin{itemize}
\item the element $\x_{2k+1}$ has the same $j$-filtration as $\x_{2k}$, but the $i$-filtration differs by $d_{2k+1}$;
\item the element $\x_{2k+1}$ has the same $i$-filtration as $\x_{2k+2}$, but the $j$-filtration differs by $d_{2k+2}$.
\end{itemize}
Then, the unoriented knot Floer complex $\CFKum(K)$ (up to chain homotopy equivalence) is generated over $\F[U]$ by $\y_0, \ldots, \y_{2l}$, with differential
\[
\de \y_{2k} = 0, \qquad \de\y_{2k+1} = U^{d_{2k+1}}\cdot\y_{2k}+U^{d_{2k+2}}\cdot\y_{2k+2}.
\]
In the language of \cite{DM}, this is a \emph{standard complex associated to a graded root}. Graded roots were introduced by N\'emethi~\cite{Nemethi} to study $\HF^+$ of plumbed 3-manifolds. We instead consider the `upside-down' graded roots as in \cite{DM}, which are used to describe $\HF^-$. Note that the generators that we call $\y_0, \ldots, \y_{2l+1}$ were called $v_1, \a_1, v_2, \a_2, \ldots, \a_{n-1}, v_n$ in \cite{DM}. The numbers $d_i$ determine the graded root up to an overall shift: the (relative) grading is given by
\[
\chi(\y_{2k}) - \chi(\y_{2k+1}) = d_{2k+1}, \qquad
\chi(\y_{2k+2}) - \chi(\y_{2k+1}) = d_{2k+2}.
\]

We now determine the numbers $d_i$ in the case of the torus knot $T_{n,n+1}$.
Recall that the Alexander polynomial of $T_{p,q}$ is
\[
\Delta_{p,q}(t) = \frac{(t^{pq}-1)\cdot(t-1)}{(t^p-1)\cdot(t^q-1)}.
\]
The coefficients of $\Delta_{p,q}(t)$ have been computed in the general case (see for example \cite[(1.6) and (2.16)]{Cyclotomic1} or \cite{Cyclotomic2}).
In our case $p=n$ and $q=n+1$, and $\Delta_{p,q}$ is simple enough to be computed explicitly. After simplifying
\[
\Delta_{n, n+1} = \frac{(x^n)^n + (x^n)^{n-1} + \cdots x^n + 1}{x^n + x^{n-1} + \cdots + x + 1},
\]
one can carry out the long division explicitly and find that $\Delta_{n, n+1}$ is in the form of Equation \eqref{eq:Alex}, with
\[
(d_1, d_2, d_3, d_4, \ldots, d_{2l-1}, d_{2l}) = (1, n-1, 2, n-2, \ldots, n-1, 1).
\]

\begin{figure}
\begin{tikzpicture}

\def\u{1.1cm}
\def\circleradius{\u/20}
\def\gap{\u/2}

\newcommand{\nd}[1]{
\fill[black] #1 circle (\circleradius)
}

\begin{scope}[xshift=-3cm]

\foreach \x in {0,-1,...,-13}
{
\nd{(0,\x*\gap)};
\draw (0,\x*\gap) -- (0,\x*\gap-\gap);
}
\nd{(0,-14*\gap)};
\draw (0,-14*\gap) node [anchor=north]  {$\vdots$};

\foreach \SIGN in {1,-1}
{

\foreach \x in {-1,-2}
{
\nd{(\SIGN*\gap, \x*\gap)};
\draw (\SIGN*\gap, \x*\gap) -- (\SIGN*\gap, \x*\gap-\gap);
}
\nd{(\SIGN*\gap, -3*\gap)};
\draw (\SIGN*\gap, -3*\gap) -- (0, -4*\gap);

\begin{scope}[yshift=-6*\gap]
\foreach \x in {1,2}
{
\nd{(\SIGN*\x*\gap,\x*\gap)};
\draw (\SIGN*\x*\gap-\SIGN*\gap,\x*\gap-\gap) -- (\SIGN*\x*\gap,\x*\gap);
}
\end{scope}

\begin{scope}[yshift=-10*\gap]
\nd{(\SIGN*\gap,\gap)};
\draw (0,0) -- (\SIGN*\gap,\gap);
\end{scope}

}

\end{scope}

\begin{scope}[xshift=3cm]

\foreach \x in {-4,-5,...,-13}
{
\nd{(0,\x*\gap)};
\draw (0,\x*\gap) -- (0,\x*\gap-\gap);
}
\nd{(0,-14*\gap)};
\draw (0,-14*\gap) node [anchor=north]  {$\vdots$};

\foreach \SIGN in {1,-1}
{

\begin{scope}[yshift=-4*\gap]
\foreach \x in {1,2,...,4}
{
\nd{(\SIGN*\gap*\x/4, \x*\gap)};
\draw (\SIGN*\gap*\x/4, \x*\gap) -- (\SIGN*\gap*\x/4-\SIGN*\gap/4, \x*\gap-\gap);
}
\end{scope}

\begin{scope}[yshift=-5*\gap]
\foreach \x in {1,2,3}
{
\nd{(\SIGN*\x*\gap,\x*\gap)};
\draw (\SIGN*\x*\gap-\SIGN*\gap,\x*\gap-\gap) -- (\SIGN*\x*\gap,\x*\gap);
}
\end{scope}

\begin{scope}[yshift=-8*\gap]
\foreach \x in {1,2}
{
\nd{(\SIGN*\x*\gap,\x*\gap)};
\draw (\SIGN*\x*\gap-\SIGN*\gap,\x*\gap-\gap) -- (\SIGN*\x*\gap,\x*\gap);
}
\end{scope}

\begin{scope}[yshift=-13*\gap]
\nd{(\SIGN*\gap,\gap)};
\draw (0,0) -- (\SIGN*\gap,\gap);
\end{scope}

}

\end{scope}

\end{tikzpicture}
\caption{The figure shows the graded roots homotopy equivalent to $\CFKum(T_{7,8})$ and $\CFKum(T_{8,9})$ respectively.
Each dot in the figure denotes a generator of the complex over $\mathbb{F}$, and the edges encode the $U$-action: for a dot $\x$, $U \cdot \x$ is the dot where you get by following the edge exiting from the bottom of the dot $\x$. The height of the dot denotes its (relative) Maslov grading, and the $U$ action decreases the Maslov grading by $2$.
Note that when $n$ is odd (e.g.~$T_{7,8}$) there is one branch of length $\ceil{n/2}$ and two branches of length $\floor{n/2}$, whereas when $n$ is even (e.g.~$T_{8,9}$) there are two branches of length $n/2$.}
\label{fig:gradedroots}
\end{figure}

From this one can check that the graded root has a picture with $n$ branches, of lengths $1$, $2$, \ldots, $2$, $1$. See Figure \ref{fig:gradedroots}. The longest branch is in the middle, of length $\ceil{n/2}$. This is also the top graded branch, so it generates the infinite tower. Thus, the order of $\HFKum$ is given by the next longest branch, which has length $\floor{n/2}$. Thus, $\OrdU(T_{n,n+1}) = \floor{n/2}$.
\end{proof}

We now restrict the attention to the torus knots of the form $T_{2r-1,2r}$.

\begin{remark}
\label{rem:torusknots}
Batson~\cite{Batson} first proved that $\gamma_4(T_{2r-1,2r}) = r-1$. This can be proved with any of the bounds from \cite{Batson, tHFK, GM} (for $T_{2r-1,2r}$ or $\m{T_{2r-1,2r}}$), which all give the same sharp obstruction.
We choose to use $\upsilon$ from \cite{tHFK} because it is an additive quantity, like the knot signature. In \cite[Theorem 1.2]{tHFK}, Ozsv\'ath--Stipsicz--Szab\'o proved that for a knot $K$ in $S^3$
\begin{equation}
\label{eq:OSSz}
\gamma_4(K) \geq \upsilon(K) - \frac{\sigma(K)}{2}.
\end{equation}
Batson~\cite{Batson} computed that $\sigma(T_{2r-1,2r}) = -2r^2+2$, and by \cite[Theorem 1.3]{tHFK} one can compute $\upsilon(T_{2r-1,2r}) = -r^2+r$. Thus,
\begin{equation}
\label{eq:sigmaupsilonT}
\upsilon(T_{2r-1,2r}) - \frac{\sigma(T_{2r-1,2r})}2 = r-1.
\end{equation}
\end{remark}

%
%

We now restate and prove Corollary \ref{cor:example} from the introduction.

{\renewcommand{\thetheorem}{\ref{cor:example}}
\begin{corollary}
For all $\gamma\geq1$ and $m\geq1$, there exists a knot $K_{\gamma, m}$ with $d_u(K_{\gamma, m}, U_1) = \gamma_4(K_{\gamma, m}) = \gamma$, and such that $d_u^r(K_{\gamma, m}, U_1) \geq \gamma+m$.

Thus, each unorientable surface $\Sigma \subset B^4$ with $\de\Sigma = K_{\gamma, m}$ and $\gamma(\Sigma) = \gamma$ has at least $m$ local minima (with respect to the radial function).
\end{corollary}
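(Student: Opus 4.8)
The plan is to take $K_{\gamma,m}$ to be a connected sum of three torus knots, rigged so that its unoriented torsion order is large while its non-orientable slice genus (hence also $d_u(\cdot,U_1)$) stays equal to $\gamma$. Explicitly, set
\[
K_{\gamma,m}=T_{2\gamma+1,\,2\gamma+2}\ \#\ T_{N,N+1}\ \#\ \m{T_{N,N+1}},
\]
where $N$ is chosen so that $\floor{N/2}=\gamma+m$ (e.g.\ $N=2(\gamma+m)$). Write $A=T_{2\gamma+1,2\gamma+2}$, which is $T_{2r-1,2r}$ for $r=\gamma+1$, and $B=T_{N,N+1}$. Lemma \ref{lem:TOtorusknots} gives $\OrdU(A)=\gamma$ and $\OrdU(B)=\gamma+m$; Remark \ref{rem:torusknots} gives $\gamma_4(A)=\gamma$ and $\upsilon(A)-\sigma(A)/2=\gamma$; and $B\#\m B$ is smoothly slice.

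The first step is to compute $\OrdU(K_{\gamma,m})$. By Remark \ref{rem:propertiesHFLum}.\eqref{it:Kunneth} the unoriented torsion order of a connected sum is the maximum over the summands, and by Remark \ref{rem:propertiesHFLum}.\eqref{it:mirror} one has $\OrdU(\m B)=\OrdU(B)$, so
\[
\OrdU(K_{\gamma,m})=\max\set{\gamma,\ \gamma+m,\ \gamma+m}=\gamma+m.
\]
Since $\HFKum(U_1)\cong\F[U]$ is torsion-free, $\OrdU(U_1)=0$, and Corollary \ref{cor:dur} then immediately yields $d_u^r(K_{\gamma,m},U_1)\ge\gamma+m$.

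The second step is to show $d_u(K_{\gamma,m},U_1)=\gamma$. Capping off the $U_1$ end of a connected unorientable cobordism from $K_{\gamma,m}$ to $U_1$ by a disc produces a connected unorientable surface in $B^4$ bounding $K_{\gamma,m}$ of Euler characteristic one larger, and conversely (by deleting a small disc); since $K_{\gamma,m}$ will be non-slice there are no competing concordances, so $d_u(K_{\gamma,m},U_1)=\gamma_4(K_{\gamma,m})$. For $\gamma_4(K_{\gamma,m})\le\gamma$ I would boundary-connect-sum a non-orientable genus-$\gamma$ surface realizing $\gamma_4(A)=\gamma$ (Remark \ref{rem:torusknots}) with a slice disc for $B\#\m B$. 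For the reverse inequality I use that $\upsilon$ and $\sigma$ are additive under connected sum and change sign under mirroring, so $\upsilon(K_{\gamma,m})=\upsilon(A)$ and $\sigma(K_{\gamma,m})=\sigma(A)$; then \eqref{eq:OSSz} together with \eqref{eq:sigmaupsilonT} gives
\[
\gamma_4(K_{\gamma,m})\ \ge\ \upsilon(A)-\frac{\sigma(A)}{2}\ =\ \gamma.
\]
Hence $\gamma_4(K_{\gamma,m})=\gamma\ge1$, which in particular confirms that $K_{\gamma,m}$ is non-slice and that $d_u(K_{\gamma,m},U_1)=\gamma_4(K_{\gamma,m})=\gamma$.

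For the final assertion, given a connected unorientable $\Sigma\subset B^4$ with $\de\Sigma=K_{\gamma,m}$ and $\gamma(\Sigma)=\gamma$ I would delete a small disc around one of its radial local minima to obtain a connected unorientable cobordism $\Sigma^\circ$ from $K_{\gamma,m}$ to $U_1$ with one fewer local minimum and with $-\chi(\Sigma^\circ)=-\chi(\Sigma)+1=\gamma$. Then $d_u^r(K_{\gamma,m},U_1)\le|\Sigma^\circ|$, and comparing with the lower bound $d_u^r(K_{\gamma,m},U_1)\ge\gamma+m$ forces $\Sigma$ to have at least $m$ local minima; this critical-point bookkeeping is routine and identical to the proof of the analogous statement in \cite{JMZ}. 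I do not expect a serious obstacle anywhere: the one creative point is the choice of $K_{\gamma,m}$, and the thing to get right there is that the torsion order — governed by the ``largest'' summand through the maximum formula — can be driven up to $\gamma+m$ while the non-orientable genus — governed by the additive invariants $\upsilon$ and $\sigma$ — is held at $\gamma$; the slice summand $B\#\m B$ is exactly what decouples these two quantities.
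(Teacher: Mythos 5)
Your construction is genuinely different from the paper's and, for the first part of the statement, it works and is in one respect cleaner. The paper takes $K_{\gamma,m}=T_{2r-1,2r}\#\m{T_{2s-1,2s}}$ with $r=\gamma+m$, $s=m$, whose torsion order is $r-1=\gamma+m-1$, and it obtains the minima statement by applying Theorem \ref{thm:main} directly to the punctured surface; your three-summand knot $T_{2\gamma+1,2\gamma+2}\#T_{N,N+1}\#\m{T_{N,N+1}}$ has $\OrdU=\gamma+m$ by Lemma \ref{lem:TOtorusknots} and Remark \ref{rem:propertiesHFLum}, so the bound $d_u^r(K_{\gamma,m},U_1)\geq\gamma+m$ follows at once from Corollary \ref{cor:dur}. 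Your computation $d_u(K_{\gamma,m},U_1)=\gamma_4(K_{\gamma,m})=\gamma$ (upper bound by boundary connected sum with a slice disc for the slice summand, lower bound by additivity of $\upsilon$ and $\sigma$ and Equations \eqref{eq:OSSz}--\eqref{eq:sigmaupsilonT}) is correct; only note that non-sliceness should be cited from $\upsilon-\sigma/2=\gamma\neq0$ (which you computed) rather than from $\gamma_4\geq1$, since slice knots also have $\gamma_4=1$.

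There is, however, a genuine gap in your last step. Write $n$ for the number of radial local minima of $\Sigma\subset B^4$ and $a$ for the number of its interior radial local maxima. Deleting a disc around one minimum gives a cobordism $\Sigma^\circ$ from $K_{\gamma,m}$ to $U_1$ with $\gamma(\Sigma^\circ)=\gamma$, but read in this direction it has $M=n-1$ local maxima and $a$ local minima, so
\[
|\Sigma^\circ|=\max\set{a,\,n-1}+\gamma .
\]
Comparing with $d_u^r(K_{\gamma,m},U_1)\geq\gamma+m$ only yields $\max\set{a,n-1}\geq m$, which does not force $n\geq m$: a surface with a single radial minimum but many radial maxima is not excluded, because the symmetrized quantity $|\Sigma^\circ|$ cannot distinguish which type of critical point is abundant. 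The repair is to bypass $d_u^r$ and apply inequality \eqref{eq:U_main} of Theorem \ref{thm:main} directly to $\Sigma^\circ$, exactly as the paper does: its bound involves only the number of local maxima, giving
\[
\gamma+m=\OrdU(K_{\gamma,m})\leq\max\set{\OrdU(U_1),\,n-1}+\gamma=(n-1)+\gamma,
\]
hence $n\geq m+1\geq m$ (in fact slightly more than claimed, because your knot's torsion order is one larger than the paper's). With that substitution your argument goes through.
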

\addtocounter{theorem}{-1}
}
\begin{proof}
Let $K_{\gamma, m} = T_{2r-1,2r} \# \m{T_{2s-1, 2s}}$, for $r = \gamma+m$ and $s = m$.
By Equation \eqref{eq:OSSz}, the additivity of $\sigma$ and $\upsilon$, and Equation \eqref{eq:sigmaupsilonT}, we have that
\begin{align*}
\gamma_4(K_{\gamma, m}) 
&\geq \left(\upsilon(T_{2r-1,2r}) - \frac{\sigma(T_{2r-1,2r})}{2}\right) - \left(\upsilon(T_{2s-1,2s}) - \frac{\sigma(T_{2s-1,2s})}{2}\right) \\
&= (r-1) - (s-1) = r-s = \gamma.
\end{align*}
On the other hand, Batson showed that there is a sequence $r-s$ unoriented band surgeries from $T_{2r-1,2r}$ to $T_{2s-1, 2s}$~\cite[Figure 7]{Batson}. Thus, we get a sequence of $r-s$ unoriented band surgeries from $K_{\gamma, m}$ to $T_{2s-1, 2s} \# \m{T_{2s-1, 2s}}$, which is slice, so $\gamma_4(K_{\gamma, m}) = r-s$.

Now let $\Sigma$ be a (possibly non-orientable) surface $\Sigma \subset B^4$ with $\de\Sigma = K_{\gamma, m}$ and $b_1(\Sigma) = \gamma$. Theorem \ref{thm:main} gives a lower bound on the number of local minima. More precisely, if $\Sigma$ has $n$ local minima, by removing a small ball from $B^4$ we get a cobordism from $K_{\gamma, m}$ to the unknot $U_1$ with $M = n-1$ maxima (note that the cobordism is upside down, so the minima are turned into maxima, and one of them disappears when we remove the ball). Thus, Theorem \ref{thm:main} implies that
\[
\OrdU(K_{\gamma, m}) \leq (n-1) + (r - s) = n-s +r - 1.
\]
We also know that
\[
\OrdU(K_{\gamma, m}) = \max\set{\OrdU(T_{2r-1, 2r}), \OrdU(T_{2s-1,2s})} = r-1
\]
by Remark \ref{rem:propertiesHFLum} (points \eqref{it:Kunneth} and \eqref{it:mirror}) and Lemma \ref{lem:TOtorusknots}, so we get
\[
n \geq s = m.
\]
The statement about $d_u$ and $d_u^r$ follows from the computation of $\gamma_4(K_{\gamma, m})$ above and from Corollary \ref{cor:dur}.
\end{proof}

\begin{remark}
We do not know if the bound on $d_u^r(K_{\gamma, m}, U_1)$ and on the number of minima of $\Sigma$ in Corollary \ref{cor:example} is sharp on the knots used in the proof of the corollary. Recall that we set
\[
K_{\gamma, m} := T_{2r-1,2r} \# \m{T_{2s-1, 2s}}
\]
for $r = \gamma+m$ and $s = m$. Batson showed that with $\gamma$ bands we can get to $K_{0, m} = T_{2s-1,2s} \# \m{T_{2s-1, 2s}}$, and Juh\'asz--Miller--Zemke showed that $K_{0,m}$ bounds a ribbon disc with $2m-1$ local minima. Thus,
\[
d_u^r(K_{\gamma, m}, U_1) \leq \gamma + 2m - 2.
\]
We conjecture that this inequality is actually an equality.
\end{remark}

\bibliography{GM}
\bibliographystyle{alpha}

\end{document}